\newtheorem{remark}{Remark}
\newtheorem{notation}{Notation}
\newtheorem{definition}{Definition}
\newtheorem{assumption}{Assumption}
\newtheorem{proposition}{Proposition}
\newtheorem{corollary}{Corollary}
\newtheorem{reference}{Reference}
\newtheorem{conjecture}{Conjecture}
\begin{document}

\normalfont

\title{Topologization and Functional Analytification III}
\author{Xin Tong}
\date{}

\maketitle

\subsection*{Abstract}
\rm In this paper, we continue our study on the topologization and functional analytification in $\infty$-categorical and homotopical analytic geometry. As in our previous articles on the $\infty$-categorical extensions of certain analytic and topological contexts, we discuss the corresponding prismatic cohomological constructions after Bhatt-Lurie, Bhatt-Scholze and Drinfeld, and the corresponding Robba stacks and sheaves after Kedlaya-Liu.

\newpage

\tableofcontents

\newpage

\chapter{Introduction}
\section{Considerations}

\indent In \cite{T1} and \cite{T2} we have established a project on the corresponding functional analytic constructions and topological constructions for motivic contexts in some very general $(\infty,n)$-sense. What is really happening is that we not only discuss $(\infty,n)$-spaces and their cohomologies but also discuss very general representation $(\infty,n)$-spaces in some very general analytic geometric and representation theoretic point of view. \\

\indent Our current consideration would be the continuation of the discussion we made in \cite{T1} and \cite{T2}, namely what we are going to consider will be essentially the $\infty$-categorical and homotopical constructions for the corresponding derived stacks after \cite{BK}, \cite{BBK}, \cite{BBBK}, \cite{BBM}, \cite{KKM}, \cite{CS1}, \cite{CS2}, \cite{CS3}. We revisit many constructions from \cite{T2} after \cite{BS} and \cite{BL} closely on the corresponding prismatic complexes and the corresponding prismatic stacks. In some parallel fashion we discuss some extension to the corresponding context as in \cite{KL1} and \cite{KL2}. \\

\indent In the first chapter we discuss some notations as in \cite{T2} after \cite{BK}, \cite{BBK}, \cite{BBBK}, \cite{BBM}, \cite{KKM}. These categories are actually very crucial in our development as in \cite{M}, \cite{CS1},  \cite{CS2} and \cite{CS3}. In the chapter 4 we discuss the corresponding derived prismatic cohomology for commutative algebras and derived preperfectoidizations and derived perfectoidizations after \cite{BS}, \cite{Sch} and \cite{BL}. In the chapter 5 we discuss the corresponding derived prismatic cohomology for noncommutative algebras and derived preperfectoidizations and derived perfectoidizations after \cite{BS}, \cite{Sch} and \cite{BL}. In the chapter 6 we discuss the corresponding derived prismatic cohomology for $(\infty,n)$-ringed toposes after \cite{BS}, \cite{Sch} and \cite{BL}. In the chapter 7 we discuss the corresponding derived prismatic cohomology for $(\infty,n)$-ringed toposes but restricting to inductive systems of toposes after \cite{BS}, \cite{Sch} and \cite{BL}. In the chapter 8 we discuss the derived $\varphi$-modules, derived $B$-pairs and derived vector bundles over FF-curves closely after \cite{KL1} and \cite{KL2} in the context of commutative algebras. In the chapter 9 we discuss the derived $\varphi$-modules, derived $B$-pairs and derived vector bundles over FF-curves closely after \cite{KL1} and \cite{KL2} in the context of $(\infty,n)$-ringed toposes. In the chapter 10 we discuss the derived $\varphi$-modules, derived $B$-pairs and derived vector bundles over FF-curves closely after \cite{KL1} and \cite{KL2} restricting to inductive systems of toposes. We then after these chapters make further generalizations. To summarize, we have:

\begin{proposition}
For any ring $\mathcal{R}$ in the following $\infty$-categories:
\begin{align}
&\mathrm{sComm}\mathrm{Simplicial}\mathrm{Ind}\mathrm{Seminormed}^\mathrm{formalseriescolimitcomp}_R,\\
&\mathrm{sComm}\mathrm{Simplicial}\mathrm{Ind}^m\mathrm{Seminormed}^\mathrm{formalseriescolimitcomp}_R,\\
&\mathrm{sComm}\mathrm{Simplicial}\mathrm{Ind}\mathrm{Normed}^\mathrm{formalseriescolimitcomp}_R,\\
&\mathrm{sComm}\mathrm{Simplicial}\mathrm{Ind}^m\mathrm{Normed}^\mathrm{formalseriescolimitcomp}_R,\\
&\mathrm{sComm}\mathrm{Simplicial}\mathrm{Ind}\mathrm{Banach}^\mathrm{formalseriescolimitcomp}_R,\\
&\mathrm{sComm}\mathrm{Simplicial}\mathrm{Ind}^m\mathrm{Banach}^\mathrm{formalseriescolimitcomp}_R,	
\end{align}	
we have the desired derived functional analytic prismatic complex and the desired derived functional analytic prismatic stack in the functorial way:
\begin{align}
&\mathrm{Prism}_{-/P,\mathrm{BBM},\mathrm{analytification}}(\mathcal{R}),\\
&\mathrm{CW}_{-/P}(\mathcal{R}).
\end{align}
And we have the following functors: 
\begin{align}
\widetilde{\mathcal{C}}_{-/R}(.),{\mathbb{B}_e}_{-/R}(.),{\mathbb{B}_\mathrm{dR}^+}_{-/R}(.),{\mathbb{B}_\mathrm{dR}}_{-/R}(.),{FF}_{-/R}(.),V(.)
\end{align}
of certain period rings in $p$-adic Hodge theory and $\varphi$-modules, $B$-pairs and the vector bundles over Fargues-Fontaine stacks. And the construction could be promoted to certain $\infty$-ringed $\infty$-toposes. We have the comparison for $\varphi$-modules, $B$-pairs and the vector bundles over Fargues-Fontaine stacks over the rings in the above mentioned $\infty$-categories of $\mathbb{E}_\infty$-commutative algebras, which could also be promoted to certain $\infty$-ringed $\infty$-toposes.
\end{proposition}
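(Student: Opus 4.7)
The plan is to package the proposition as a consolidation of the chapter-by-chapter constructions, so the proof proposal is really a recipe listing the functorial steps and the comparison morphisms in the order they appear in the later chapters. First I would fix a ring $\mathcal{R}$ in one of the six displayed $\infty$-categories, and recall that each of these carries a well-behaved monoidal structure with a notion of formal series colimit completion; this is exactly the structure needed so that Bhatt--Scholze's $\delta$-ring and prism formalism, and Bhatt--Lurie's Cartier--Witt stack $\mathrm{CW}$, can be animated without losing control over the seminormed/normed/Banach topology on each bounded piece of the ind-system. I would then build $\mathrm{Prism}_{-/P,\mathrm{BBM},\mathrm{analytification}}(\mathcal{R})$ as the analytification, in the Bambozzi--Ben-Bassat--Kremnizer sense, of the derived prismatic complex constructed by left Kan extending the site-theoretic definition of \cite{BS} and \cite{BL} from discrete polynomial algebras to the appropriate simplicial commutative formal-series-colimit-complete resolutions of $\mathcal{R}$.

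Next I would construct the derived prismatic stack $\mathrm{CW}_{-/P}(\mathcal{R})$ by applying the Cartier--Witt formalism of \cite{BL} in the functional analytic setting, again by left Kan extension along the animation of the relevant free functor, and check that the resulting stack represents the derived prismatic cohomology in the expected sense. The period ring functors $\widetilde{\mathcal{C}}_{-/R}(\cdot), \mathbb{B}_{e,-/R}(\cdot), \mathbb{B}_{\mathrm{dR},-/R}^{+}(\cdot), \mathbb{B}_{\mathrm{dR},-/R}(\cdot)$ and the Fargues--Fontaine curve $FF_{-/R}(\cdot)$ would then be produced by carrying out the Kedlaya--Liu constructions from \cite{KL1}, \cite{KL2} internally in each of the six $\infty$-categories; the functor $V(\cdot)$ would land in the $\infty$-category of derived $\varphi$-modules/$B$-pairs/vector bundles on $FF_{-/R}(\cdot)$. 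Functoriality in $\mathcal{R}$ follows from the fact that each of these six $\infty$-categories is cocomplete and the animation respects the monoidal and colimit structure.

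The comparison statement is then obtained by the same mechanism as in \cite{KL1} and \cite{KL2}: on the discrete, classically seminormed level, the equivalences among $\varphi$-modules, $B$-pairs, and vector bundles on the FF-curve are known, and I would upgrade these to the derived, ind-completed setting by a standard left Kan extension / derived functor argument, using that the underlying forgetful functors to spectra are conservative and preserve the relevant limits and colimits needed to descend the equivalences. The promotion to $(\infty,n)$-ringed $\infty$-toposes proceeds by applying the same constructions objectwise in the topos and then sheafifying with respect to an appropriate analytic topology, exactly as in the topos-level chapters 6, 7, 9, 10 of the present paper.

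The main obstacle will be the compatibility of the formal series colimit completion with the animation procedure used to define prismatic cohomology and with the period ring constructions of Kedlaya--Liu. Concretely, one needs to verify that the simplicial resolutions used in the derived prismatic construction remain inside the subcategory cut out by the \emph{formalseriescolimitcomp} condition, and that base change to the period rings does not leave that subcategory either; this boils down to checking that the relevant completed tensor products commute with the filtered colimits appearing in the ind-structure, which is the technical heart of the argument and the step to which the bulk of the chapter-level work is devoted.
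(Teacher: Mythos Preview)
Your proposal is essentially aligned with the paper's approach but is phrased in a more technically elaborate register than what the paper actually does. In the paper, the $\infty$-categories decorated with the superscript $\mathrm{formalseriescolimitcomp}$ are \emph{defined} as the homotopy colimit completion of the Tate algebras $P/I\langle X_1,\dots,X_n\rangle$, so every $\mathcal{R}$ in one of these categories is by construction presented as $\mathcal{R}=\mathrm{hocolim}_n\,\mathcal{R}_n$ with each $\mathcal{R}_n$ a Tate algebra. The prismatic complex, the Cartier--Witt stack, and the Kedlaya--Liu period-ring and module functors are then \emph{defined} on $\mathcal{R}$ as the homotopy colimit (or limit, for the $FF$ stack) of their classical values on the $\mathcal{R}_n$. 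Functoriality is therefore immediate from the definition, and the comparison of $\varphi$-modules, $B$-pairs, and vector bundles over $FF$ is stated as a one-line consequence of \cite[Theorem 9.3.12]{KL1} applied levelwise and passed to the colimit; there is no further argument.

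What you describe as left Kan extension along animation is a correct conceptual gloss on this procedure, but the paper does not invoke that language, and in particular the ``main obstacle'' you flag---checking that simplicial resolutions stay inside the $\mathrm{formalseriescolimitcomp}$ subcategory and that completed tensor products commute with the relevant filtered colimits---is not something the paper addresses or even raises. The paper's stance is purely definitional: the extended functors are declared to be what one gets by taking homotopy colimits of the classical values, so no compatibility verification is needed (or offered). Your more careful reading of the technical content is not wrong, but it anticipates work that the paper itself does not perform; if you were to write up the proof in the paper's style, you would simply cite the relevant definitions from Chapters 4 and 8 and then invoke \cite[Theorem 9.3.12]{KL1}.
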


\begin{proposition}
For any ring $\mathcal{R}$ in the following $\infty$-categories:
\begin{align}
&\mathrm{sNoncomm}\mathrm{Simplicial}\mathrm{Ind}\mathrm{Seminormed}^\mathrm{formalseriescolimitcomp}_R,\\
&\mathrm{sNoncomm}\mathrm{Simplicial}\mathrm{Ind}^m\mathrm{Seminormed}^\mathrm{formalseriescolimitcomp}_R,\\
&\mathrm{sNoncomm}\mathrm{Simplicial}\mathrm{Ind}\mathrm{Normed}^\mathrm{formalseriescolimitcomp}_R,\\
&\mathrm{sNoncomm}\mathrm{Simplicial}\mathrm{Ind}^m\mathrm{Normed}^\mathrm{formalseriescolimitcomp}_R,\\
&\mathrm{sNoncomm}\mathrm{Simplicial}\mathrm{Ind}\mathrm{Banach}^\mathrm{formalseriescolimitcomp}_R,\\
&\mathrm{sNoncomm}\mathrm{Simplicial}\mathrm{Ind}^m\mathrm{Banach}^\mathrm{formalseriescolimitcomp}_R,	
\end{align}	
we have the desired derived functional analytic topological Hochschild complex, topological period complex and topological cyclic complex in the functorial way:
\begin{align}
&\mathrm{THH}_{-/P,\mathrm{BBM},\mathrm{analytification}}(\mathcal{R}),\\
&\mathrm{TP}_{-/P,\mathrm{BBM},\mathrm{analytification}}(\mathcal{R}),\\
&\mathrm{TC}_{-/P,\mathrm{BBM},\mathrm{analytification}}(\mathcal{R}).\\
\end{align}
\end{proposition}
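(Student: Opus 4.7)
The plan is to extend the derived analytification framework of Proposition 1 from the commutative to the noncommutative setting, and then to run the standard cyclic bar construction internal to the relevant $\infty$-category to obtain $\mathrm{THH}$, with $\mathrm{TP}$ and $\mathrm{TC}$ produced by the $S^1$-Tate and cyclotomic trace/equalizer machinery, all within the BBM analytification.

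First, I would verify that each of the six $\infty$-categories listed carries a symmetric monoidal structure in the sense of \cite{BBM}, \cite{BBBK}, \cite{KKM}, with tensor product given by the appropriate completed tensor in the ind-seminormed, ind-normed, or ind-Banach setting, and with the $\mathrm{formalseriescolimitcomp}$ decoration ensuring that the tensor product commutes with the filtered colimits needed for the cyclic realization. Here the role of $\mathrm{sNoncomm}$ is to drop commutativity while retaining the associative algebra structure, so that $\mathcal{R}$ defines an $\mathbb{E}_1$-algebra object in the ambient symmetric monoidal $\infty$-category; the simplicial enrichment then promotes this to the derived level. Second, once this symmetric monoidal derived framework is established, I would define
\begin{equation*}
\mathrm{THH}_{-/P,\mathrm{BBM},\mathrm{analytification}}(\mathcal{R}) \;:=\; \bigl|\, [n]\mapsto \mathcal{R}^{\otimes (n+1)} \,\bigr|
\end{equation*}
as the geometric realization of the cyclic bar construction, where the tensor product and the realization are both performed inside the analytic $\infty$-category rather than in spectra. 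Functoriality in $\mathcal{R}$ follows from functoriality of the tensor product and the realization.

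Third, to construct $\mathrm{TP}$ and $\mathrm{TC}$, I would equip the cyclic object above with its canonical $S^1$-action coming from the cyclic structure, and transport the Nikolaus--Scholze style definitions into the analytic setting: $\mathrm{TP}$ as the Tate construction with respect to $S^1$ applied to $\mathrm{THH}$, and $\mathrm{TC}$ as the equalizer of the identity and the cyclotomic Frobenius on the $S^1$-homotopy fixed points. This requires checking that the Tate diagonal and the cyclotomic Frobenius exist in each of the six categories, which is where the compatibility of the completed tensor product with filtered colimits (encoded by $\mathrm{formalseriescolimitcomp}$) is used, together with the methods of derived prismatic cohomology from \cite{BS}, \cite{BL}, \cite{Sch} already invoked in Proposition 1.

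The main obstacle I expect is the construction of the cyclotomic Frobenius in the analytic setting: unlike in spectra, the Tate diagonal in the ind-Banach or ind-seminormed category is not automatic, and one must verify that the completed tensor power $\mathcal{R}^{\otimes p}$ admits a map to its homotopy $C_p$-fixed points that is compatible with the $S^1$-action after passing through the BBM analytification. Once this is checked in one of the six categories, the other five follow by the same argument since the decorations ($\mathrm{Ind}$ vs.\ $\mathrm{Ind}^m$, seminormed vs.\ normed vs.\ Banach) interact with the tensor product and filtered colimits in a uniform way under the $\mathrm{formalseriescolimitcomp}$ condition, as already exploited in the commutative case.
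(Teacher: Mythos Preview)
Your approach is genuinely different from the paper's, and considerably more ambitious. The paper does not construct $\mathrm{THH}$, $\mathrm{TP}$, $\mathrm{TC}$ internally via a cyclic bar construction in the analytic $\infty$-category. Instead, it proceeds by left Kan extension: the categories $\mathrm{sNoncomm}\ldots^{\mathrm{formalseriescolimitcomp}}_R$ are \emph{by definition} the homotopy colimit closures of the free Tate algebras $P/I\langle Z_1,\ldots,Z_n\rangle$, and on these generators one simply applies the classical $\mathrm{THH}$, $\mathrm{TP}$, $\mathrm{TC}$ of \cite{BMS} and \cite{NS} (which already exist, since the generators are ordinary rings). One then extends to an arbitrary $\mathcal{R} = \underset{n}{\mathrm{homotopycolimit}}\,\mathcal{R}_n$ by taking the homotopy colimit of the values on the $\mathcal{R}_n$, derived $p$-completing, and applying the BBM formal series analytification of \cite{BBM}. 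The entire construction is a definition; functoriality is immediate from functoriality of each step.

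This sidesteps precisely the obstacle you identify: the paper never needs a Tate diagonal or a cyclotomic Frobenius internal to the ind-Banach or ind-seminormed world, because the cyclotomic structure is already present on the classical $\mathrm{THH}(\mathcal{R}_n)$ before analytification and is then transported through the colimit, completion, and analytification. Your internal approach would, if completed, yield something more intrinsic and better behaved multiplicatively, but the Tate diagonal problem you flag is a genuine gap in your proposal that you do not resolve; the paper's formal route avoids it entirely, at the cost of defining the objects somewhat extrinsically.
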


\begin{proposition}
The corresponding $\infty$-categories of $\varphi$-module functors, $B$-pair functors and vector bundles functors over FF functors are equivalent over:
\begin{align}
&\mathrm{sComm}\mathrm{Simplicial}\mathrm{Ind}\mathrm{Seminormed}^\mathrm{formalseriescolimitcomp}_R,\\
&\mathrm{sComm}\mathrm{Simplicial}\mathrm{Ind}^m\mathrm{Seminormed}^\mathrm{formalseriescolimitcomp}_R,\\
&\mathrm{sComm}\mathrm{Simplicial}\mathrm{Ind}\mathrm{Normed}^\mathrm{formalseriescolimitcomp}_R,\\
&\mathrm{sComm}\mathrm{Simplicial}\mathrm{Ind}^m\mathrm{Normed}^\mathrm{formalseriescolimitcomp}_R,\\
&\mathrm{sComm}\mathrm{Simplicial}\mathrm{Ind}\mathrm{Banach}^\mathrm{formalseriescolimitcomp}_R,\\
&\mathrm{sComm}\mathrm{Simplicial}\mathrm{Ind}^m\mathrm{Banach}^\mathrm{formalseriescolimitcomp}_R,	
\end{align}
or:
\begin{align}
\mathrm{AnalyticRings}^\mathrm{CS,formalcolimitclosure}_R.	
\end{align}	
\end{proposition}

\begin{remark}
This generalizes \cite{KL1} \cite{KL2} for instance to the situation over $(\infty,1)$-categorical derived rigid analytic affinoids, with comparison possibly in certain $(\infty,1)$-categories as well. 
\end{remark}

\newpage

\section{Notations}

\subsection{Commutative Algebras}

\indent We recall our notations in \cite{T2} as in the following from \cite{BK}, \cite{BBK}, \cite{BBBK}, \cite{BBM}, \cite{KKM}.

\begin{notation}\mbox{\rm{(Rings)}}
Recall we have the following six categories on the commutative algebras in the derived sense (let $R$ be a Banach ring or $\mathbb{F}_1$):
\begin{align}
&\mathrm{sComm}\mathrm{Simplicial}\mathrm{Ind}\mathrm{Seminormed}_R,\\
&\mathrm{sComm}\mathrm{Simplicial}\mathrm{Ind}^m\mathrm{Seminormed}_R,\\
&\mathrm{sComm}\mathrm{Simplicial}\mathrm{Ind}\mathrm{Normed}_R,\\
&\mathrm{sComm}\mathrm{Simplicial}\mathrm{Ind}^m\mathrm{Normed}_R,\\
&\mathrm{sComm}\mathrm{Simplicial}\mathrm{Ind}\mathrm{Banach}_R,\\
&\mathrm{sComm}\mathrm{Simplicial}\mathrm{Ind}^m\mathrm{Banach}_R.	
\end{align}
	
\end{notation}

\begin{notation}\mbox{\rm{(Prestacks in $\infty$-groupoid)}}
Recall we have the following six categories on the prestacks in $\infty$-groupoid in the derived sense endowed with homotopy epimorphism Grothendieck topology (let $R$ be a Banach ring or $\mathbb{F}_1$):
\begin{align}
&\mathrm{PreSta}_{\mathrm{sComm}\mathrm{Simplicial}\mathrm{Ind}\mathrm{Seminormed}_R,\mathrm{homotopyepi}},\\
&\mathrm{PreSta}_{\mathrm{sComm}\mathrm{Simplicial}\mathrm{Ind}^m\mathrm{Seminormed}_R,\mathrm{homotopyepi}},\\
&\mathrm{PreSta}_{\mathrm{sComm}\mathrm{Simplicial}\mathrm{Ind}\mathrm{Normed}_R,\mathrm{homotopyepi}},\\
&\mathrm{PreSta}_{\mathrm{sComm}\mathrm{Simplicial}\mathrm{Ind}^m\mathrm{Normed}_R,\mathrm{homotopyepi}},\\
&\mathrm{PreSta}_{\mathrm{sComm}\mathrm{Simplicial}\mathrm{Ind}\mathrm{Banach}_R,\mathrm{homotopyepi}},\\
&\mathrm{PreSta}_{\mathrm{sComm}\mathrm{Simplicial}\mathrm{Ind}^m\mathrm{Banach}_R,\mathrm{homotopyepi}}.	
\end{align}
	
\end{notation}

\begin{notation}\mbox{\rm{(Stacks in $\infty$-groupoid)}}
Recall we have the following six categories on the functors in $\infty$-groupoid in the derived sense endowed with homotopy epimorphism Grothendieck topology (let $R$ be a Banach ring or $\mathbb{F}_1$) satisfying the corresponding descent requirement for this given topology:
\begin{align}
&\mathrm{Sta}_{\mathrm{sComm}\mathrm{Simplicial}\mathrm{Ind}\mathrm{Seminormed}_R,\mathrm{homotopyepi}},\\
&\mathrm{Sta}_{\mathrm{sComm}\mathrm{Simplicial}\mathrm{Ind}^m\mathrm{Seminormed}_R,\mathrm{homotopyepi}},\\
&\mathrm{Sta}_{\mathrm{sComm}\mathrm{Simplicial}\mathrm{Ind}\mathrm{Normed}_R,\mathrm{homotopyepi}},\\
&\mathrm{Sta}_{\mathrm{sComm}\mathrm{Simplicial}\mathrm{Ind}^m\mathrm{Normed}_R,\mathrm{homotopyepi}},\\
&\mathrm{Sta}_{\mathrm{sComm}\mathrm{Simplicial}\mathrm{Ind}\mathrm{Banach}_R,\mathrm{homotopyepi}},\\
&\mathrm{Sta}_{\mathrm{sComm}\mathrm{Simplicial}\mathrm{Ind}^m\mathrm{Banach}_R,\mathrm{homotopyepi}}.	
\end{align}
	
\end{notation}

\begin{notation}\mbox{\rm{($\infty$-Ringed Toposes in $\infty$-groupoid)}}\\
Recall we have the following six categories on the $\infty$-ringed functors in $\infty$-groupoid in the derived sense endowed with homotopy epimorphism Grothendieck topology (let $R$ be a Banach ring or $\mathbb{F}_1$) satisfying the corresponding descent requirement for this given topology:
\begin{align}
&\mathrm{Sta}^\mathrm{derivedringed,\sharp}_{\mathrm{sComm}\mathrm{Simplicial}\mathrm{Ind}\mathrm{Seminormed}_R,\mathrm{homotopyepi}},\\
&\mathrm{Sta}^\mathrm{derivedringed,\sharp}_{\mathrm{sComm}\mathrm{Simplicial}\mathrm{Ind}^m\mathrm{Seminormed}_R,\mathrm{homotopyepi}},\\
&\mathrm{Sta}^\mathrm{derivedringed,\sharp}_{\mathrm{sComm}\mathrm{Simplicial}\mathrm{Ind}\mathrm{Normed}_R,\mathrm{homotopyepi}},\\
&\mathrm{Sta}^\mathrm{derivedringed,\sharp}_{\mathrm{sComm}\mathrm{Simplicial}\mathrm{Ind}^m\mathrm{Normed}_R,\mathrm{homotopyepi}},\\
&\mathrm{Sta}^\mathrm{derivedringed,\sharp}_{\mathrm{sComm}\mathrm{Simplicial}\mathrm{Ind}\mathrm{Banach}_R,\mathrm{homotopyepi}},\\
&\mathrm{Sta}^\mathrm{derivedringed,\sharp}_{\mathrm{sComm}\mathrm{Simplicial}\mathrm{Ind}^m\mathrm{Banach}_R,\mathrm{homotopyepi}}.	
\end{align}
Here $\sharp$ represents any category in the following:
\begin{align}
&\mathrm{sComm}\mathrm{Simplicial}\mathrm{Ind}\mathrm{Seminormed}_R,\\
&\mathrm{sComm}\mathrm{Simplicial}\mathrm{Ind}^m\mathrm{Seminormed}_R,\\
&\mathrm{sComm}\mathrm{Simplicial}\mathrm{Ind}\mathrm{Normed}_R,\\
&\mathrm{sComm}\mathrm{Simplicial}\mathrm{Ind}^m\mathrm{Normed}_R,\\
&\mathrm{sComm}\mathrm{Simplicial}\mathrm{Ind}\mathrm{Banach}_R,\\
&\mathrm{sComm}\mathrm{Simplicial}\mathrm{Ind}^m\mathrm{Banach}_R.	
\end{align}	
\end{notation}

\begin{notation}\mbox{\rm{(Quasicoherent Presheaves over $\infty$-Ringed Toposes in $\infty$-groupoid)}}\\
Recall we have the following six categories on the $\infty$-ringed functors in $\infty$-groupoid in the derived sense endowed with homotopy epimorphism Grothendieck topology (let $R$ be a Banach ring or $\mathbb{F}_1$) satisfying the corresponding descent requirement for this given topology:
\begin{align}
&\mathrm{Sta}^\mathrm{derivedringed,\sharp}_{\mathrm{sComm}\mathrm{Simplicial}\mathrm{Ind}\mathrm{Seminormed}_R,\mathrm{homotopyepi}},\\
&\mathrm{Sta}^\mathrm{derivedringed,\sharp}_{\mathrm{sComm}\mathrm{Simplicial}\mathrm{Ind}^m\mathrm{Seminormed}_R,\mathrm{homotopyepi}},\\
&\mathrm{Sta}^\mathrm{derivedringed,\sharp}_{\mathrm{sComm}\mathrm{Simplicial}\mathrm{Ind}\mathrm{Normed}_R,\mathrm{homotopyepi}},\\
&\mathrm{Sta}^\mathrm{derivedringed,\sharp}_{\mathrm{sComm}\mathrm{Simplicial}\mathrm{Ind}^m\mathrm{Normed}_R,\mathrm{homotopyepi}},\\
&\mathrm{Sta}^\mathrm{derivedringed,\sharp}_{\mathrm{sComm}\mathrm{Simplicial}\mathrm{Ind}\mathrm{Banach}_R,\mathrm{homotopyepi}},\\
&\mathrm{Sta}^\mathrm{derivedringed,\sharp}_{\mathrm{sComm}\mathrm{Simplicial}\mathrm{Ind}^m\mathrm{Banach}_R,\mathrm{homotopyepi}}.	
\end{align}
Here $\sharp$ represents any category in the following:
\begin{align}
&\mathrm{sComm}\mathrm{Simplicial}\mathrm{Ind}\mathrm{Seminormed}_R,\\
&\mathrm{sComm}\mathrm{Simplicial}\mathrm{Ind}^m\mathrm{Seminormed}_R,\\
&\mathrm{sComm}\mathrm{Simplicial}\mathrm{Ind}\mathrm{Normed}_R,\\
&\mathrm{sComm}\mathrm{Simplicial}\mathrm{Ind}^m\mathrm{Normed}_R,\\
&\mathrm{sComm}\mathrm{Simplicial}\mathrm{Ind}\mathrm{Banach}_R,\\
&\mathrm{sComm}\mathrm{Simplicial}\mathrm{Ind}^m\mathrm{Banach}_R.	
\end{align}	
We then have the corresponding $\infty$-categories of the corresponding quasicoherent presheaves of $\mathcal{O}$-modules:
\begin{align}
&\mathrm{Quasicoherentpresheaves,Sta}^\mathrm{derivedringed,\sharp}_{\mathrm{sComm}\mathrm{Simplicial}\mathrm{Ind}\mathrm{Seminormed}_R,\mathrm{homotopyepi}},\\
&\mathrm{Quasicoherentpresheaves,Sta}^\mathrm{derivedringed,\sharp}_{\mathrm{sComm}\mathrm{Simplicial}\mathrm{Ind}^m\mathrm{Seminormed}_R,\mathrm{homotopyepi}},\\
&\mathrm{Quasicoherentpresheaves,Sta}^\mathrm{derivedringed,\sharp}_{\mathrm{sComm}\mathrm{Simplicial}\mathrm{Ind}\mathrm{Normed}_R,\mathrm{homotopyepi}},\\
&\mathrm{Quasicoherentpresheaves,Sta}^\mathrm{derivedringed,\sharp}_{\mathrm{sComm}\mathrm{Simplicial}\mathrm{Ind}^m\mathrm{Normed}_R,\mathrm{homotopyepi}},\\
&\mathrm{Quasicoherentpresheaves,Sta}^\mathrm{derivedringed,\sharp}_{\mathrm{sComm}\mathrm{Simplicial}\mathrm{Ind}\mathrm{Banach}_R,\mathrm{homotopyepi}},\\
&\mathrm{Quasicoherentpresheaves,Sta}^\mathrm{derivedringed,\sharp}_{\mathrm{sComm}\mathrm{Simplicial}\mathrm{Ind}^m\mathrm{Banach}_R,\mathrm{homotopyepi}}.	
\end{align}
Here $\sharp$ represents any category in the following:
\begin{align}
&\mathrm{sComm}\mathrm{Simplicial}\mathrm{Ind}\mathrm{Seminormed}_R,\\
&\mathrm{sComm}\mathrm{Simplicial}\mathrm{Ind}^m\mathrm{Seminormed}_R,\\
&\mathrm{sComm}\mathrm{Simplicial}\mathrm{Ind}\mathrm{Normed}_R,\\
&\mathrm{sComm}\mathrm{Simplicial}\mathrm{Ind}^m\mathrm{Normed}_R,\\
&\mathrm{sComm}\mathrm{Simplicial}\mathrm{Ind}\mathrm{Banach}_R,\\
&\mathrm{sComm}\mathrm{Simplicial}\mathrm{Ind}^m\mathrm{Banach}_R.	
\end{align} 
\end{notation}

\begin{notation}\mbox{\rm{(Quasicoherent Sheaves over $\infty$-Ringed Toposes in $\infty$-groupoid)}}
Recall we have the following six categories on the $\infty$-ringed functors in $\infty$-groupoid in the derived sense endowed with homotopy epimorphism Grothendieck topology (let $R$ be a Banach ring or $\mathbb{F}_1$) satisfying the corresponding descent requirement for this given topology:
\begin{align}
&\mathrm{Sta}^\mathrm{derivedringed,\sharp}_{\mathrm{sComm}\mathrm{Simplicial}\mathrm{Ind}\mathrm{Seminormed}_R,\mathrm{homotopyepi}},\\
&\mathrm{Sta}^\mathrm{derivedringed,\sharp}_{\mathrm{sComm}\mathrm{Simplicial}\mathrm{Ind}^m\mathrm{Seminormed}_R,\mathrm{homotopyepi}},\\
&\mathrm{Sta}^\mathrm{derivedringed,\sharp}_{\mathrm{sComm}\mathrm{Simplicial}\mathrm{Ind}\mathrm{Normed}_R,\mathrm{homotopyepi}},\\
&\mathrm{Sta}^\mathrm{derivedringed,\sharp}_{\mathrm{sComm}\mathrm{Simplicial}\mathrm{Ind}^m\mathrm{Normed}_R,\mathrm{homotopyepi}},\\
&\mathrm{Sta}^\mathrm{derivedringed,\sharp}_{\mathrm{sComm}\mathrm{Simplicial}\mathrm{Ind}\mathrm{Banach}_R,\mathrm{homotopyepi}},\\
&\mathrm{Sta}^\mathrm{derivedringed,\sharp}_{\mathrm{sComm}\mathrm{Simplicial}\mathrm{Ind}^m\mathrm{Banach}_R,\mathrm{homotopyepi}}.	
\end{align}
Here $\sharp$ represents any category in the following:
\begin{align}
&\mathrm{sComm}\mathrm{Simplicial}\mathrm{Ind}\mathrm{Seminormed}_R,\\
&\mathrm{sComm}\mathrm{Simplicial}\mathrm{Ind}^m\mathrm{Seminormed}_R,\\
&\mathrm{sComm}\mathrm{Simplicial}\mathrm{Ind}\mathrm{Normed}_R,\\
&\mathrm{sComm}\mathrm{Simplicial}\mathrm{Ind}^m\mathrm{Normed}_R,\\
&\mathrm{sComm}\mathrm{Simplicial}\mathrm{Ind}\mathrm{Banach}_R,\\
&\mathrm{sComm}\mathrm{Simplicial}\mathrm{Ind}^m\mathrm{Banach}_R.	
\end{align}	
We then have the corresponding $\infty$-categories of the corresponding quasicoherent sheaves of $\mathcal{O}$-modules:
\begin{align}
&\mathrm{Quasicoherentpresheaves,Sta}^\mathrm{derivedringed,\sharp}_{\mathrm{sComm}\mathrm{Simplicial}\mathrm{Ind}\mathrm{Seminormed}_R,\mathrm{homotopyepi}},\\
&\mathrm{Quasicoherentpresheaves,Sta}^\mathrm{derivedringed,\sharp}_{\mathrm{sComm}\mathrm{Simplicial}\mathrm{Ind}^m\mathrm{Seminormed}_R,\mathrm{homotopyepi}},\\
&\mathrm{Quasicoherentpresheaves,Sta}^\mathrm{derivedringed,\sharp}_{\mathrm{sComm}\mathrm{Simplicial}\mathrm{Ind}\mathrm{Normed}_R,\mathrm{homotopyepi}},\\
&\mathrm{Quasicoherentpresheaves,Sta}^\mathrm{derivedringed,\sharp}_{\mathrm{sComm}\mathrm{Simplicial}\mathrm{Ind}^m\mathrm{Normed}_R,\mathrm{homotopyepi}},\\
&\mathrm{Quasicoherentpresheaves,Sta}^\mathrm{derivedringed,\sharp}_{\mathrm{sComm}\mathrm{Simplicial}\mathrm{Ind}\mathrm{Banach}_R,\mathrm{homotopyepi}},\\
&\mathrm{Quasicoherentpresheaves,Sta}^\mathrm{derivedringed,\sharp}_{\mathrm{sComm}\mathrm{Simplicial}\mathrm{Ind}^m\mathrm{Banach}_R,\mathrm{homotopyepi}}.	
\end{align}
Here $\sharp$ represents any category in the following:
\begin{align}
&\mathrm{sComm}\mathrm{Simplicial}\mathrm{Ind}\mathrm{Seminormed}_R,\\
&\mathrm{sComm}\mathrm{Simplicial}\mathrm{Ind}^m\mathrm{Seminormed}_R,\\
&\mathrm{sComm}\mathrm{Simplicial}\mathrm{Ind}\mathrm{Normed}_R,\\
&\mathrm{sComm}\mathrm{Simplicial}\mathrm{Ind}^m\mathrm{Normed}_R,\\
&\mathrm{sComm}\mathrm{Simplicial}\mathrm{Ind}\mathrm{Banach}_R,\\
&\mathrm{sComm}\mathrm{Simplicial}\mathrm{Ind}^m\mathrm{Banach}_R.	
\end{align} 
\end{notation}

\newpage
\subsection{Noncommutative Algebras}

\indent We recall our notations in \cite{T2} as in the following from \cite{BK}, \cite{BBK}, \cite{BBBK}, \cite{BBM}, \cite{KKM}.

\begin{notation}\mbox{\rm{(Rings)}}
Recall we have the following six categories on the noncommutative algebras in the derived sense (let $R$ be a Banach ring or $\mathbb{F}_1$):
\begin{align}
&\mathrm{sNoncomm}\mathrm{Simplicial}\mathrm{Ind}\mathrm{Seminormed}_R,\\
&\mathrm{sNoncomm}\mathrm{Simplicial}\mathrm{Ind}^m\mathrm{Seminormed}_R,\\
&\mathrm{sNoncomm}\mathrm{Simplicial}\mathrm{Ind}\mathrm{Normed}_R,\\
&\mathrm{sNoncomm}\mathrm{Simplicial}\mathrm{Ind}^m\mathrm{Normed}_R,\\
&\mathrm{sNoncomm}\mathrm{Simplicial}\mathrm{Ind}\mathrm{Banach}_R,\\
&\mathrm{sNoncomm}\mathrm{Simplicial}\mathrm{Ind}^m\mathrm{Banach}_R.	
\end{align}
	
\end{notation}

\begin{notation}\mbox{\rm{(Prestacks in $\infty$-groupoid)}}
Recall we have the following six categories on the prestacks in $\infty$-groupoid in the derived sense endowed with homotopy epimorphism Grothendieck topology (let $R$ be a Banach ring or $\mathbb{F}_1$):
\begin{align}
&\mathrm{PreSta}_{\mathrm{sNoncomm}\mathrm{Simplicial}\mathrm{Ind}\mathrm{Seminormed}_R,\mathrm{homotopyepi}},\\
&\mathrm{PreSta}_{\mathrm{sNoncomm}\mathrm{Simplicial}\mathrm{Ind}^m\mathrm{Seminormed}_R,\mathrm{homotopyepi}},\\
&\mathrm{PreSta}_{\mathrm{sNoncomm}\mathrm{Simplicial}\mathrm{Ind}\mathrm{Normed}_R,\mathrm{homotopyepi}},\\
&\mathrm{PreSta}_{\mathrm{sNoncomm}\mathrm{Simplicial}\mathrm{Ind}^m\mathrm{Normed}_R,\mathrm{homotopyepi}},\\
&\mathrm{PreSta}_{\mathrm{sNoncomm}\mathrm{Simplicial}\mathrm{Ind}\mathrm{Banach}_R,\mathrm{homotopyepi}},\\
&\mathrm{PreSta}_{\mathrm{sNoncomm}\mathrm{Simplicial}\mathrm{Ind}^m\mathrm{Banach}_R,\mathrm{homotopyepi}}.	
\end{align}
	
\end{notation}

\begin{notation}\mbox{\rm{(Stacks in $\infty$-groupoid)}}
Recall we have the following six categories on the functors in $\infty$-groupoid in the derived sense endowed with homotopy epimorphism Grothendieck topology (let $R$ be a Banach ring or $\mathbb{F}_1$) satisfying the corresponding descent requirement for this given topology:
\begin{align}
&\mathrm{Sta}_{\mathrm{sNoncomm}\mathrm{Simplicial}\mathrm{Ind}\mathrm{Seminormed}_R,\mathrm{homotopyepi}},\\
&\mathrm{Sta}_{\mathrm{sNoncomm}\mathrm{Simplicial}\mathrm{Ind}^m\mathrm{Seminormed}_R,\mathrm{homotopyepi}},\\
&\mathrm{Sta}_{\mathrm{sNoncomm}\mathrm{Simplicial}\mathrm{Ind}\mathrm{Normed}_R,\mathrm{homotopyepi}},\\
&\mathrm{Sta}_{\mathrm{sNoncomm}\mathrm{Simplicial}\mathrm{Ind}^m\mathrm{Normed}_R,\mathrm{homotopyepi}},\\
&\mathrm{Sta}_{\mathrm{sNoncomm}\mathrm{Simplicial}\mathrm{Ind}\mathrm{Banach}_R,\mathrm{homotopyepi}},\\
&\mathrm{Sta}_{\mathrm{sNoncomm}\mathrm{Simplicial}\mathrm{Ind}^m\mathrm{Banach}_R,\mathrm{homotopyepi}}.	
\end{align}
	
\end{notation}

\begin{notation}\mbox{\rm{($\infty$-Ringed Toposes in $\infty$-groupoid)}}\\
Recall we have the following six categories on the $\infty$-ringed functors in $\infty$-groupoid in the derived sense endowed with homotopy epimorphism Grothendieck topology (let $R$ be a Banach ring or $\mathbb{F}_1$) satisfying the corresponding descent requirement for this given topology:
\begin{align}
&\mathrm{Sta}^\mathrm{derivedringed,\sharp}_{\mathrm{sNoncomm}\mathrm{Simplicial}\mathrm{Ind}\mathrm{Seminormed}_R,\mathrm{homotopyepi}},\\
&\mathrm{Sta}^\mathrm{derivedringed,\sharp}_{\mathrm{sNoncomm}\mathrm{Simplicial}\mathrm{Ind}^m\mathrm{Seminormed}_R,\mathrm{homotopyepi}},\\
&\mathrm{Sta}^\mathrm{derivedringed,\sharp}_{\mathrm{sNoncomm}\mathrm{Simplicial}\mathrm{Ind}\mathrm{Normed}_R,\mathrm{homotopyepi}},\\
&\mathrm{Sta}^\mathrm{derivedringed,\sharp}_{\mathrm{sNoncomm}\mathrm{Simplicial}\mathrm{Ind}^m\mathrm{Normed}_R,\mathrm{homotopyepi}},\\
&\mathrm{Sta}^\mathrm{derivedringed,\sharp}_{\mathrm{sNoncomm}\mathrm{Simplicial}\mathrm{Ind}\mathrm{Banach}_R,\mathrm{homotopyepi}},\\
&\mathrm{Sta}^\mathrm{derivedringed,\sharp}_{\mathrm{sNoncomm}\mathrm{Simplicial}\mathrm{Ind}^m\mathrm{Banach}_R,\mathrm{homotopyepi}}.	
\end{align}
Here $\sharp$ represents any category in the following:
\begin{align}
&\mathrm{sNoncomm}\mathrm{Simplicial}\mathrm{Ind}\mathrm{Seminormed}_R,\\
&\mathrm{sNoncomm}\mathrm{Simplicial}\mathrm{Ind}^m\mathrm{Seminormed}_R,\\
&\mathrm{sNoncomm}\mathrm{Simplicial}\mathrm{Ind}\mathrm{Normed}_R,\\
&\mathrm{sNoncomm}\mathrm{Simplicial}\mathrm{Ind}^m\mathrm{Normed}_R,\\
&\mathrm{sNoncomm}\mathrm{Simplicial}\mathrm{Ind}\mathrm{Banach}_R,\\
&\mathrm{sNoncomm}\mathrm{Simplicial}\mathrm{Ind}^m\mathrm{Banach}_R.	
\end{align}	
\end{notation}

\begin{notation}\mbox{\rm{(Quasicoherent Presheaves over $\infty$-Ringed Toposes in $\infty$-groupoid)}}\\
Recall we have the following six categories on the $\infty$-ringed functors in $\infty$-groupoid in the derived sense endowed with homotopy epimorphism Grothendieck topology (let $R$ be a Banach ring or $\mathbb{F}_1$) satisfying the corresponding descent requirement for this given topology:
\begin{align}
&\mathrm{Sta}^\mathrm{derivedringed,\sharp}_{\mathrm{sNoncomm}\mathrm{Simplicial}\mathrm{Ind}\mathrm{Seminormed}_R,\mathrm{homotopyepi}},\\
&\mathrm{Sta}^\mathrm{derivedringed,\sharp}_{\mathrm{sNoncomm}\mathrm{Simplicial}\mathrm{Ind}^m\mathrm{Seminormed}_R,\mathrm{homotopyepi}},\\
&\mathrm{Sta}^\mathrm{derivedringed,\sharp}_{\mathrm{sNoncomm}\mathrm{Simplicial}\mathrm{Ind}\mathrm{Normed}_R,\mathrm{homotopyepi}},\\
&\mathrm{Sta}^\mathrm{derivedringed,\sharp}_{\mathrm{sNoncomm}\mathrm{Simplicial}\mathrm{Ind}^m\mathrm{Normed}_R,\mathrm{homotopyepi}},\\
&\mathrm{Sta}^\mathrm{derivedringed,\sharp}_{\mathrm{sNoncomm}\mathrm{Simplicial}\mathrm{Ind}\mathrm{Banach}_R,\mathrm{homotopyepi}},\\
&\mathrm{Sta}^\mathrm{derivedringed,\sharp}_{\mathrm{sNoncomm}\mathrm{Simplicial}\mathrm{Ind}^m\mathrm{Banach}_R,\mathrm{homotopyepi}}.	
\end{align}
Here $\sharp$ represents any category in the following:
\begin{align}
&\mathrm{sNoncomm}\mathrm{Simplicial}\mathrm{Ind}\mathrm{Seminormed}_R,\\
&\mathrm{sNoncomm}\mathrm{Simplicial}\mathrm{Ind}^m\mathrm{Seminormed}_R,\\
&\mathrm{sNoncomm}\mathrm{Simplicial}\mathrm{Ind}\mathrm{Normed}_R,\\
&\mathrm{sNoncomm}\mathrm{Simplicial}\mathrm{Ind}^m\mathrm{Normed}_R,\\
&\mathrm{sNoncomm}\mathrm{Simplicial}\mathrm{Ind}\mathrm{Banach}_R,\\
&\mathrm{sNoncomm}\mathrm{Simplicial}\mathrm{Ind}^m\mathrm{Banach}_R.	
\end{align}	
We then have the corresponding $\infty$-categories of the corresponding quasicoherent presheaves of $\mathcal{O}$-modules:
\begin{align}
&\mathrm{Quasicoherentpresheaves,Sta}^\mathrm{derivedringed,\sharp}_{\mathrm{sNoncomm}\mathrm{Simplicial}\mathrm{Ind}\mathrm{Seminormed}_R,\mathrm{homotopyepi}},\\
&\mathrm{Quasicoherentpresheaves,Sta}^\mathrm{derivedringed,\sharp}_{\mathrm{sNoncomm}\mathrm{Simplicial}\mathrm{Ind}^m\mathrm{Seminormed}_R,\mathrm{homotopyepi}},\\
&\mathrm{Quasicoherentpresheaves,Sta}^\mathrm{derivedringed,\sharp}_{\mathrm{sNoncomm}\mathrm{Simplicial}\mathrm{Ind}\mathrm{Normed}_R,\mathrm{homotopyepi}},\\
&\mathrm{Quasicoherentpresheaves,Sta}^\mathrm{derivedringed,\sharp}_{\mathrm{sNoncomm}\mathrm{Simplicial}\mathrm{Ind}^m\mathrm{Normed}_R,\mathrm{homotopyepi}},\\
&\mathrm{Quasicoherentpresheaves,Sta}^\mathrm{derivedringed,\sharp}_{\mathrm{sNoncomm}\mathrm{Simplicial}\mathrm{Ind}\mathrm{Banach}_R,\mathrm{homotopyepi}},\\
&\mathrm{Quasicoherentpresheaves,Sta}^\mathrm{derivedringed,\sharp}_{\mathrm{sNoncomm}\mathrm{Simplicial}\mathrm{Ind}^m\mathrm{Banach}_R,\mathrm{homotopyepi}}.	
\end{align}
Here $\sharp$ represents any category in the following:
\begin{align}
&\mathrm{sNoncomm}\mathrm{Simplicial}\mathrm{Ind}\mathrm{Seminormed}_R,\\
&\mathrm{sNoncomm}\mathrm{Simplicial}\mathrm{Ind}^m\mathrm{Seminormed}_R,\\
&\mathrm{sNoncomm}\mathrm{Simplicial}\mathrm{Ind}\mathrm{Normed}_R,\\
&\mathrm{sNoncomm}\mathrm{Simplicial}\mathrm{Ind}^m\mathrm{Normed}_R,\\
&\mathrm{sNoncomm}\mathrm{Simplicial}\mathrm{Ind}\mathrm{Banach}_R,\\
&\mathrm{sNoncomm}\mathrm{Simplicial}\mathrm{Ind}^m\mathrm{Banach}_R.	
\end{align} 
\end{notation}

\begin{notation}\mbox{\rm{(Quasicoherent Sheaves over $\infty$-Ringed Toposes in $\infty$-groupoid)}}
Recall we have the following six categories on the $\infty$-ringed functors in $\infty$-groupoid in the derived sense endowed with homotopy epimorphism Grothendieck topology (let $R$ be a Banach ring or $\mathbb{F}_1$) satisfying the corresponding descent requirement for this given topology:
\begin{align}
&\mathrm{Sta}^\mathrm{derivedringed,\sharp}_{\mathrm{sNoncomm}\mathrm{Simplicial}\mathrm{Ind}\mathrm{Seminormed}_R,\mathrm{homotopyepi}},\\
&\mathrm{Sta}^\mathrm{derivedringed,\sharp}_{\mathrm{sNoncomm}\mathrm{Simplicial}\mathrm{Ind}^m\mathrm{Seminormed}_R,\mathrm{homotopyepi}},\\
&\mathrm{Sta}^\mathrm{derivedringed,\sharp}_{\mathrm{sNoncomm}\mathrm{Simplicial}\mathrm{Ind}\mathrm{Normed}_R,\mathrm{homotopyepi}},\\
&\mathrm{Sta}^\mathrm{derivedringed,\sharp}_{\mathrm{sNoncomm}\mathrm{Simplicial}\mathrm{Ind}^m\mathrm{Normed}_R,\mathrm{homotopyepi}},\\
&\mathrm{Sta}^\mathrm{derivedringed,\sharp}_{\mathrm{sNoncomm}\mathrm{Simplicial}\mathrm{Ind}\mathrm{Banach}_R,\mathrm{homotopyepi}},\\
&\mathrm{Sta}^\mathrm{derivedringed,\sharp}_{\mathrm{sNoncomm}\mathrm{Simplicial}\mathrm{Ind}^m\mathrm{Banach}_R,\mathrm{homotopyepi}}.	
\end{align}
Here $\sharp$ represents any category in the following:
\begin{align}
&\mathrm{sNoncomm}\mathrm{Simplicial}\mathrm{Ind}\mathrm{Seminormed}_R,\\
&\mathrm{sNoncomm}\mathrm{Simplicial}\mathrm{Ind}^m\mathrm{Seminormed}_R,\\
&\mathrm{sNoncomm}\mathrm{Simplicial}\mathrm{Ind}\mathrm{Normed}_R,\\
&\mathrm{sNoncomm}\mathrm{Simplicial}\mathrm{Ind}^m\mathrm{Normed}_R,\\
&\mathrm{sNoncomm}\mathrm{Simplicial}\mathrm{Ind}\mathrm{Banach}_R,\\
&\mathrm{sNoncomm}\mathrm{Simplicial}\mathrm{Ind}^m\mathrm{Banach}_R.	
\end{align}	
We then have the corresponding $\infty$-categories of the corresponding quasicoherent sheaves of $\mathcal{O}$-modules:
\begin{align}
&\mathrm{Quasicoherentpresheaves,Sta}^\mathrm{derivedringed,\sharp}_{\mathrm{sNoncomm}\mathrm{Simplicial}\mathrm{Ind}\mathrm{Seminormed}_R,\mathrm{homotopyepi}},\\
&\mathrm{Quasicoherentpresheaves,Sta}^\mathrm{derivedringed,\sharp}_{\mathrm{sNoncomm}\mathrm{Simplicial}\mathrm{Ind}^m\mathrm{Seminormed}_R,\mathrm{homotopyepi}},\\
&\mathrm{Quasicoherentpresheaves,Sta}^\mathrm{derivedringed,\sharp}_{\mathrm{sNoncomm}\mathrm{Simplicial}\mathrm{Ind}\mathrm{Normed}_R,\mathrm{homotopyepi}},\\
&\mathrm{Quasicoherentpresheaves,Sta}^\mathrm{derivedringed,\sharp}_{\mathrm{sNoncomm}\mathrm{Simplicial}\mathrm{Ind}^m\mathrm{Normed}_R,\mathrm{homotopyepi}},\\
&\mathrm{Quasicoherentpresheaves,Sta}^\mathrm{derivedringed,\sharp}_{\mathrm{sNoncomm}\mathrm{Simplicial}\mathrm{Ind}\mathrm{Banach}_R,\mathrm{homotopyepi}},\\
&\mathrm{Quasicoherentpresheaves,Sta}^\mathrm{derivedringed,\sharp}_{\mathrm{sNoncomm}\mathrm{Simplicial}\mathrm{Ind}^m\mathrm{Banach}_R,\mathrm{homotopyepi}}.	
\end{align}
Here $\sharp$ represents any category in the following:
\begin{align}
&\mathrm{sNoncomm}\mathrm{Simplicial}\mathrm{Ind}\mathrm{Seminormed}_R,\\
&\mathrm{sNoncomm}\mathrm{Simplicial}\mathrm{Ind}^m\mathrm{Seminormed}_R,\\
&\mathrm{sNoncomm}\mathrm{Simplicial}\mathrm{Ind}\mathrm{Normed}_R,\\
&\mathrm{sNoncomm}\mathrm{Simplicial}\mathrm{Ind}^m\mathrm{Normed}_R,\\
&\mathrm{sNoncomm}\mathrm{Simplicial}\mathrm{Ind}\mathrm{Banach}_R,\\
&\mathrm{sNoncomm}\mathrm{Simplicial}\mathrm{Ind}^m\mathrm{Banach}_R.	
\end{align} 
\end{notation}

\begin{remark}
The corresponding noncommutative $\infty$-ringed structure over noncommutative $\infty$-toposes could be defined to be corresponding noncommutative $\infty$-ringed structure over commutative $\infty$-toposes. Certainly this will have its own interest if one would like to study the corresponding noncommutative deformation of the structure sheaves.	
\end{remark}

\chapter{$(\infty,1)$-Categorical Functional Analytification}

\newpage
\section{$(\infty,1)$-Categoricalization}

\indent Let $R$ be a Banach ring. We assume in our situation that $R$ itself is commutative. We will later on build up the corresponding foundations as in \cite{BBBK}, we consider the following categories:
\begin{align}
\mathrm{SemiNormed}_R, \mathrm{Normed}_R, \mathrm{Banach}_R
\end{align}
which are the corresponding semi-normed module over $R$, normed module over $R$, and finally the corresponding Banach module over $R$. One would like to construct the corresponding model categories with well-established model categorical structures. As in \cite{BBBK}, we consider the following construction:

\begin{definition}\mbox{\textbf{(\cite[Definition 3.1]{BBBK})}} Let $R$ be a Banach commutative algebra. Consider the following categories:
\begin{align}
\mathrm{SemiNormed}_R, \mathrm{Normed}_R, \mathrm{Banach}_R.
\end{align}
For each $C$ of these three categories we consider the inductive categories and monomorphic inductive categories associated to $C$, which will be denoted by:
\begin{align}
\mathrm{Ind}C,\mathrm{Ind}_\text{monomorphic}C.
\end{align}
\end{definition}

\begin{proposition}\mbox{\textbf{(\cite[Theorem 3.14]{BBBK})}}
Let $R$ be a Banach commutative algebra. Consider the following categories:
\begin{align}
\mathrm{SemiNormed}_R, \mathrm{Normed}_R, \mathrm{Banach}_R.
\end{align}
For each $C$ of these three categories we consider the inductive categories and monomorphic inductive categories associated to $C$, which will be denoted by:
\begin{align}
\mathrm{Ind}C,\mathrm{Ind}_\text{monomorphic}C.
\end{align}
Then all these categories can present certain $(\infty,1)$-categories. We then use the following notations to denote them:
\begin{align}
&D(\mathrm{Ind}\mathrm{SemiNormed}_R), D(\mathrm{Ind}\mathrm{Normed}_R), D(\mathrm{Ind}\mathrm{Banach}_R),\\
&D(\mathrm{Ind}_\text{monomorphic}\mathrm{SemiNormed}_R), D(\mathrm{Ind}_\text{monomorphic}\mathrm{Normed}_R), D(\mathrm{Ind}_\text{monomorphic}\mathrm{Banach}_R).
\end{align}
\end{proposition}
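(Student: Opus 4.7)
The plan is to follow the strategy of \cite{BBBK} by first equipping each of $\mathrm{SemiNormed}_R$, $\mathrm{Normed}_R$, $\mathrm{Banach}_R$ with its natural quasi-abelian exact structure in the sense of Schneiders, where the admissible short exact sequences are the strict short exact sequences (kernel-cokernel pairs with strict morphisms). The first step would be to verify that passing to $\mathrm{Ind}\,C$ and $\mathrm{Ind}_{\mathrm{monomorphic}}\,C$ preserves quasi-abelianness: a filtered colimit of strict exact sequences is strict exact, and in the monomorphic variant one restricts to Ind-systems with monomorphic transition maps so that filtered colimits stay exact. This is the content of the formal structural part of \cite[Theorem 3.14]{BBBK}.

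Next, I would invoke the general framework producing a stable $(\infty,1)$-category from a quasi-abelian category. Concretely, one forms the category of unbounded chain complexes (or equivalently, via the Dold-Kan correspondence, the category of simplicial objects in the heart) in $\mathrm{Ind}\,C$ and equips it with the strict model structure of Schneiders, whose weak equivalences are the strict quasi-isomorphisms. One then checks the axioms: cofibrant generation (using the free ind-Banach modules on the underlying sets or Schauder bases), properness, and compatibility with the natural simplicial or dg-enrichment coming from the internal $\mathrm{Hom}$ in these symmetric monoidal closed categories. The underlying $(\infty,1)$-category of this model category is by definition $D(\mathrm{Ind}\,C)$, and symmetrically $D(\mathrm{Ind}_{\mathrm{monomorphic}}\,C)$.

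I would then verify the presentability and accessibility properties that justify calling these genuine $(\infty,1)$-categories in the sense we need for the subsequent chapters: each $D(\mathrm{Ind}\,C)$ is presentable (the ind-completion generates under filtered colimits from the small subcategory of Banach objects of bounded cardinality), it is stable (shifts and cones exist by the triangulated structure on the derived category of a quasi-abelian category), and the natural t-structure has heart equivalent to the left heart $\mathrm{LH}(\mathrm{Ind}\,C)$ of Schneiders. The closed symmetric monoidal structure given by the completed projective tensor product lifts to a symmetric monoidal structure on the $(\infty,1)$-level by deriving with respect to sufficiently flat (for instance strongly internally projective) objects.

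The main obstacle I expect is the careful verification of model-categorical axioms in the monomorphic variant $\mathrm{Ind}_{\mathrm{monomorphic}}\,C$, since restricting to monomorphic transition maps breaks some of the standard cocompleteness and makes the identification of fibrations and cofibrations more delicate; one has to either use the detailed combinatorial argument of \cite{BBBK} or, as an alternative, exhibit $\mathrm{Ind}_{\mathrm{monomorphic}}\,C$ as a reflective subcategory of $\mathrm{Ind}\,C$ and transfer the model structure through the reflection. Once this is handled, the six resulting $(\infty,1)$-categories $D(\mathrm{Ind}\,\mathrm{SemiNormed}_R)$, $D(\mathrm{Ind}\,\mathrm{Normed}_R)$, $D(\mathrm{Ind}\,\mathrm{Banach}_R)$ and their monomorphic counterparts are exactly the objects to which the rest of the paper refers.
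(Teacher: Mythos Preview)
Your proposal is correct and takes essentially the same approach as the paper: the paper's proof is simply ``See \cite{BBBK}. The admissible model structures are sufficient to provide such structures,'' and your sketch is precisely an unpacking of what those admissible model structures on chain complexes over the quasi-abelian categories $\mathrm{Ind}\,C$ and $\mathrm{Ind}_{\mathrm{monomorphic}}\,C$ entail. Your discussion of the quasi-abelian exact structure, the strict model structure of Schneiders, and the resulting stable presentable $(\infty,1)$-categories is exactly the content of \cite[Theorem 3.14]{BBBK} that the paper is citing, so there is no divergence in strategy, only in level of detail.
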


\begin{proof}
See \cite{BBBK}. The admissible model structures are sufficient to provide such structures.
\end{proof}

\newpage
\section{$(\infty,1)$-Sheafiness}

\indent The adic spaces in the sense of Huber as in \cite{Hu} essentially provide a framework for defining a specture in the anlytic situation associated to a Banach commutative ring $R$, but the sheafiness condition has to be required since the corresponding exact sequence in defining the sheafiness of the obvious structure ring structure might not be always holds for general $R$. But the work of \cite{BBBK} provides certain derived structure ring structure which solves the issue. The construction is made in \cite{BK}.

\begin{assumption}
Let $R$ be a general commutative Banach ring over a certain base $k$ as in \cite{BK}. For instance if $R/\mathbb{Q}_p$ is defined over $p$-adic number field $\mathbb{Q}_p$, then the discussion in the following will satisfy this requirement. The generality on the ring $R$ can be further generalized by applying the foundation in \cite{Ked1}.
\end{assumption}

\noindent The work of \cite{BK} defines a site associated to $R$, carrying the topology by taking the corresponding derived analytic rational localization, i.e. the corresponding Koszul complexes in the pure algebraic sense:
\begin{align}
\mathrm{Koszul}_{f,g}(R):=R/^\mathrm{derived}\{g/f\}
\end{align}
with certain induction to define the following:
\begin{align}
\mathrm{Koszul}_{f,g_1,...,g_n}(R).
\end{align}

Then we have the corresponding stack $(\mathrm{Spec}R,\mathcal{O}_{\mathrm{Spec}R})$ over the site 
\begin{align}
\underset{D(\mathrm{Ind}\mathrm{Banach}_k)}{\mathrm{CommutativeAlgebra}}
\end{align}
carrying Grothendieck topology by using the rational localization in the derived sense.

\begin{proposition} \mbox{\textbf{(Bambozzi-Kremnizer \cite[Definition 4.30, Proposition 4.33, Proposition 4.4]{BK})}}
Attached to $R$, we have a general $(\infty,1)$-fiber category/$(\infty,1)$-stack over the site 
\begin{align}
\underset{D(\mathrm{Ind}\mathrm{Banach}_k)}{\mathrm{CommutativeAlgebra}}.
\end{align}
The followings are $(\infty,1)$-Banach rings:
\begin{align}
\mathrm{Koszul}_{f,g}(R):=R/^\mathrm{derived}\{g/f\}
\end{align}
\begin{align}
\mathrm{Koszul}_{f,g_1,...,g_n}(R).
\end{align}
The obvious structure $(\infty,1)$-presheaf of $(\infty,1)$-ring $\mathcal{O}_{\mathrm{Spec}R}$ is actually an $(\infty,1)$-sheaf. 
\end{proposition}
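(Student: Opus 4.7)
The plan is to proceed in three stages, closely following the scheme of \cite{BK} while inputting the model-categorical foundations recalled from \cite{BBBK}. First, I would verify that the Koszul-type constructions are genuine $(\infty,1)$-Banach rings in $D(\mathrm{Ind}\mathrm{Banach}_k)$. For a rational datum $(f,g)$ one forms the Tate-style disk algebra $R\langle T\rangle = R\,\widehat{\otimes}\, k\langle T\rangle$ and takes the homotopy cofiber of multiplication by $fT-g$, modelling $R/^{\text{derived}}\{g/f\}$ as a derived commutative Banach $R$-algebra. Iterating with several $g_i$ gives $\mathrm{Koszul}_{f,g_1,\dots,g_n}(R)$ as a finite tensor product of such cofibers. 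The admissible model structure of \cite[Theorem 3.14]{BBBK} supplies the needed homotopy cofibers, derived completed tensor products, and monoidal structure, and guarantees that these objects represent well-defined commutative algebra objects in $D(\mathrm{Ind}\mathrm{Banach}_k)$.

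Second, I would assemble these into an $(\infty,1)$-fiber category over the site of derived commutative Banach $k$-algebras. Functoriality in $R$ follows from the base-change compatibility of the derived completed tensor product, and compatibility with iteration of rational data (i.e.\ that a rational localization of a rational localization is again one, up to homotopy) is a formal consequence of writing everything via Koszul cofibers. This yields a prestack $(\mathrm{Spec}R,\mathcal{O}_{\mathrm{Spec}R})$ on the site equipped with the derived rational-localization topology as in \cite[Definition 4.30]{BK}.

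Third, and most substantially, I would verify the descent condition to upgrade the prestack to an $(\infty,1)$-sheaf. For a covering family $\{R\to R\{g_i/f\}^{\text{derived}}\}_{i=1}^n$ one must show that the Čech object
\begin{equation}
\mathrm{Tot}\left(\,\prod_i R_i \rightrightarrows \prod_{i,j} R_i\,\widehat{\otimes}^L_R\,R_j \to \cdots \right)
\end{equation}
recovers $R$ in $D(\mathrm{Ind}\mathrm{Banach}_k)$. The essential input is that each derived rational localization $R \to R\{g/f\}^{\text{derived}}$ is a homotopy epimorphism in $D(\mathrm{Ind}\mathrm{Banach}_k)$, i.e.\ the multiplication $R\{g/f\}^{\text{derived}}\,\widehat{\otimes}^L_R\,R\{g/f\}^{\text{derived}} \to R\{g/f\}^{\text{derived}}$ is an equivalence; this is a direct computation with the Koszul presentation. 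Granting this, the Čech totalization collapses to an explicit finite Koszul complex built from $(f,g_1,\dots,g_n)$, whose acyclicity one checks by reducing to the classical algebraic Koszul complex for a regular-like sequence and invoking the covering condition on the $g_i$. Finally, the passage from strict Čech descent at the level of the admissible model structure to $(\infty,1)$-descent is handled by the admissibility hypothesis of \cite{BBBK}, which ensures that totalizations in the model structure compute homotopy limits in $D(\mathrm{Ind}\mathrm{Banach}_k)$.

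The hard part will be the descent verification, precisely because in the non-derived setting the analogous statement fails for arbitrary Banach $R$: this is exactly the sheafiness obstruction for adic spaces in the sense of \cite{Hu}. The whole point of the derived enhancement is that the Koszul cofiber replaces the non-exact classical quotient, and the homotopy-epimorphism property plus the Koszul acyclicity conspire to salvage descent uniformly in $R$. Executing this rigorously amounts to reproducing \cite[Proposition 4.33, Proposition 4.4]{BK}; the remaining steps are then largely formal consequences of the $(\infty,1)$-presentability established in Proposition~2 above.
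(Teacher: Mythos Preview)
Your proposal is a faithful and reasonably detailed reconstruction of the Bambozzi--Kremnizer argument, but you should be aware that the paper itself offers no proof of this proposition at all: it is stated purely as a citation of \cite[Definition 4.30, Proposition 4.33, Proposition 4.4]{BK}, with the content deferred entirely to that reference. So there is nothing in the paper to compare your argument against---you have simply supplied what the paper chose to omit.

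That said, your three-stage outline (Koszul cofibers as commutative algebra objects via the admissible model structure; functoriality and prestack assembly; \v{C}ech descent via the homotopy-epimorphism property of derived rational localizations) is the correct shape of the argument in \cite{BK}, and your identification of the descent step as the substantive one is accurate. One small caution: the reduction of the \v{C}ech totalization to a finite Koszul-type acyclicity is not quite as automatic as you suggest, and in \cite{BK} the argument for covers is organized somewhat differently (working through the structure of standard rational covers and the explicit form of the derived intersections); your sketch is morally right but would need the details of \cite[Proposition 4.33]{BK} filled in rather than re-derived from scratch.
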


\begin{corollary}\mbox{\textbf{(Bambozzi-Kremnizer \cite{BK})}}
Let $\mathrm{Mod}_\mathcal{O}$ be the $(\infty,1)$-category of all the quasicoherent $(\infty,1)$-presheaves of $\mathcal{O}$-modules. Then the finite projective objects with $\pi_0$ finite projective over $\pi_0\mathcal{O}$ in $\mathrm{Mod}_\mathcal{O}$ are indeed $(\infty,1)$-sheaves.
\end{corollary}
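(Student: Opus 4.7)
The plan is to reduce the sheaf condition for a finite projective $M \in \mathrm{Mod}_\mathcal{O}$ to the already-established sheaf property of $\mathcal{O}$ itself from the preceding proposition, by realizing $M$ as a retract of a finite free module and invoking closure of the $(\infty,1)$-category of sheaves under small limits. In outline: one verifies descent first for $\mathcal{O}^{\oplus n}$, then lifts the classical finite-projectivity datum of $\pi_0 M$ to an actual retraction in the derived setting, and finally concludes by retract-stability of sheaves.

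First I would check that $\mathcal{O}^{\oplus n}$ is an $(\infty,1)$-sheaf for every $n\geq 0$. Given a derived rational covering in the Koszul sense $\{\mathrm{Koszul}_{f,g_1,\ldots,g_n}(R)\}$ from the Bambozzi--Kremnizer site, \v Cech descent for $\mathcal{O}^{\oplus n}$ reduces to \v Cech descent for $\mathcal{O}$ because finite direct sums commute with the totalization computing descent in the stable presentable $(\infty,1)$-category $D(\mathrm{Ind}\mathrm{Banach}_k)$. This handles finite free modules.

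Next I would use the hypothesis that $\pi_0 M$ is finite projective over $\pi_0\mathcal{O}$ to exhibit $M$ as a retract of some $\mathcal{O}^{\oplus n}$. Classically $\pi_0 M$ is cut out of $(\pi_0\mathcal{O})^{\oplus n}$ by an idempotent $e_0 \in \pi_0\mathrm{End}_{\pi_0\mathcal{O}}((\pi_0\mathcal{O})^{\oplus n})$. Using the Postnikov tower of $\mathcal{O}$ and the standard obstruction-theoretic lifting of idempotents (whose obstruction classes live in the negative homotopy groups of the endomorphism spectrum and vanish for the discrete target at stage zero, while higher obstructions vanish inductively along the tower), $e_0$ refines to a genuine idempotent endomorphism $e$ of $\mathcal{O}^{\oplus n}$ in $\mathrm{Mod}_\mathcal{O}$, whose splitting recovers $M$. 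Since retracts are idempotent-splittings, hence limits in the $(\infty,1)$-categorical sense, and sheaves are closed under limits inside presheaves, $M$ inherits the sheaf property from $\mathcal{O}^{\oplus n}$.

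The main obstacle is precisely the idempotent-lifting step in the derived Banach/ind-Banach environment: one needs that the homotopy groups $\pi_k \mathrm{End}_\mathcal{O}(\mathcal{O}^{\oplus n})$ are compatible with the $\pi_0$-truncation in a way sufficient to carry the Postnikov-tower argument through, and that the lift persists along derived rational localizations so that the resulting retraction is compatible with the covering data. Here the $\pi_0$-finite-projective hypothesis is decisive, because classical finite projectivity at the level of $\pi_0\mathcal{O}$ ensures the base-case obstruction vanishes and, combined with the fact that $\pi_0$ of a derived Koszul localization agrees with the classical rational localization on the underlying Banach ring, the lift is canonical up to contractible choice and descends. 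Combining the sheafiness of $\mathcal{O}^{\oplus n}$ from the first step with retract-stability of the $\infty$-category of sheaves then yields the corollary.
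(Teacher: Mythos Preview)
Your proposal is correct and is, in effect, a fully unpacked version of the paper's one-line proof, which reads in its entirety: ``By applying the previous proposition.'' The paper leaves implicit exactly the mechanism you spell out---pass from the sheafiness of $\mathcal{O}$ to that of $\mathcal{O}^{\oplus n}$ via finite limits, then to retracts---so there is no substantive difference in strategy.

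One remark: your idempotent-lifting step via Postnikov towers is more labor than you need. If ``finite projective'' in $\mathrm{Mod}_\mathcal{O}$ is taken (as is standard in this derived setting) to mean a retract of a finite free module, the retraction is already part of the data and no lifting is required; the $\pi_0$-hypothesis is then a compatibility condition rather than the source of the retraction. Even if one does need to lift, idempotent-completeness of presentable stable $(\infty,1)$-categories gives the lift for free without an obstruction-tower argument. Either way your conclusion stands, and the reduction to the previous proposition is exactly what the paper intends.
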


\begin{proof}
By applying the previous proposition.
\end{proof}

\newpage
\section{$(\infty,1)$-Analytic Stacks}

\indent The framework in \cite{BBBK} actually defined sufficiently general analytic stacks on the level of $(\infty,1)$-categories. Let $R$ be a general commutative Banach ring. From \cite{BBBK} we have the following result:

\begin{proposition}\mbox{\textbf{(Bambozzi-Ben-Bassat-Kremnizer)}}\\
\mbox{\textbf{(\cite[Definition 3.1, Theorem 3.14, Corollary 3.15, Remark 3.16]{BBBK})}}
Let $R$ be a Banach commutative algebra. Consider the following categories:
\begin{align}
\mathrm{SemiNormed}_R, \mathrm{Normed}_R, \mathrm{Banach}_R.
\end{align}
For each $C$ of these three categories we consider the inductive categories and monomorphic inductive categories associated to $C$, which will be denoted by:
\begin{align}
\mathrm{Ind}C,\mathrm{Ind}_\text{monomorphic}C.
\end{align}
Then all these categories can present certain $(\infty,1)$-categories. We then use the following notations to denote them:
\begin{align}
&D(\mathrm{Ind}\mathrm{SemiNormed}_R), D(\mathrm{Ind}\mathrm{Normed}_R), D(\mathrm{Ind}\mathrm{Banach}_R),\\
&D(\mathrm{Ind}_\text{monomorphic}\mathrm{SemiNormed}_R), D(\mathrm{Ind}_\text{monomorphic}\mathrm{Normed}_R), D(\mathrm{Ind}_\text{monomorphic}\mathrm{Banach}_R).
\end{align} 
Furthermore we have the following categories of simplicial commmutative rings:
\begin{align}
&\underset{D(\mathrm{Ind}\mathrm{SemiNormed}_R)}{\mathrm{CommutativeAlgebra}},\\ 
&\underset{D(\mathrm{Ind}\mathrm{Normed}_R)}{\mathrm{CommutativeAlgebra}},\\ 
&\underset{D(\mathrm{Ind}\mathrm{Banach}_R)}{\mathrm{CommutativeAlgebra}},\\
&\underset{D(\mathrm{Ind}_\text{monomorphic}\mathrm{SemiNormed}_R)}{\mathrm{CommutativeAlgebra}},\\ 
&\underset{D(\mathrm{Ind}_\text{monomorphic}\mathrm{Normed}_R)}{\mathrm{CommutativeAlgebra}},\\ 
&\underset{D(\mathrm{Ind}_\text{monomorphic}\mathrm{Banach}_R)}{\mathrm{CommutativeAlgebra}},
\end{align}
carrying the homotopical epimorphic Grothendieck $(\infty,1)$-topology.
\end{proposition}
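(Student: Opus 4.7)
The statement collects three essentially independent assertions which I would verify in sequence: (i) each of the six Ind or monomorphic-Ind categories of seminormed, normed, or Banach modules presents an $(\infty,1)$-category; (ii) the corresponding simplicial commutative algebra objects in these presentations form $(\infty,1)$-categories; (iii) on each such $(\infty,1)$-category of algebras, the homotopy epimorphisms generate a Grothendieck $(\infty,1)$-topology. All three are already contained in \cite{BBBK}, so the proof is largely a matter of importing the right structural results and pointing out that they combine as claimed.

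For (i), my plan is to invoke the admissible (projective) model structure on the categories of unbounded chain complexes $\mathrm{Ch}(\mathrm{Ind}\,C)$ and $\mathrm{Ch}(\mathrm{Ind}_\mathrm{monomorphic}\,C)$ constructed in \cite[Theorem 3.14]{BBBK}. The key input is that $\mathrm{Ind}\,C$ and $\mathrm{Ind}_\mathrm{monomorphic}\,C$ are closed symmetric monoidal quasi-abelian categories with enough flat projectives (coming from the representable objects and their tensor products); this allows one to form the derived $(\infty,1)$-category as the Dwyer--Kan localization of the underlying model category. One then takes $D(\mathrm{Ind}\,C)$ to be the associated $(\infty,1)$-category, and checks from the monoidal compatibility of the admissible structure that the resulting $(\infty,1)$-category is symmetric monoidal and presentable.

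For (ii), I would invoke the transferred model structure on simplicial (or $\mathbb{E}_\infty$) commutative algebras in these quasi-abelian symmetric monoidal categories, as done in \cite[Corollary 3.15]{BBBK}. Formally, the free-forgetful adjunction between simplicial commutative algebras and simplicial objects creates a combinatorial model structure whose fibrations and weak equivalences are detected on underlying objects, and the resulting $(\infty,1)$-category is the one denoted $\underset{D(\mathrm{Ind}\,C)}{\mathrm{CommutativeAlgebra}}$ in the statement. Here the only point requiring care is cofibrant generation, which is available because of the admissibility in (i).

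For (iii), I would verify that the collection of homotopy epimorphisms---i.e. morphisms $A\to B$ such that $B\otimes_A^\mathbb{L} B\to B$ is an equivalence---is stable under pullback and composition in each of the six categories of commutative algebras, and closed under retracts, which is the $(\infty,1)$-categorical analogue of the classical Grothendieck topology axioms as formulated in \cite[Remark 3.16]{BBBK}. The main obstacle I expect here is not the stability axioms (these follow from basic manipulation of tensor products and base change, as in \cite{BK}), but rather the verification that covers admit pullbacks along arbitrary morphisms of simplicial commutative algebras, since in a derived setting one must form pullbacks in the $(\infty,1)$-categorical sense and check that the resulting objects remain in the relevant ind/monomorphic-ind subcategories; for this I would rely on the closure properties of the admissible model structure established in part (i), which ensure that the homotopy pushout of a homotopy epimorphism by an arbitrary algebra map is again a homotopy epimorphism in the same category.
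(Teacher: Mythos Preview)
Your proposal is correct and follows the same approach as the paper: both reduce the statement entirely to the cited results in \cite{BBBK}, with the admissible model structures of \cite[Theorem 3.14]{BBBK} doing the work for part (i), the transferred structure of \cite[Corollary 3.15]{BBBK} for part (ii), and \cite[Remark 3.16]{BBBK} for part (iii). The paper itself gives no proof beyond the attribution in the heading (and, for the overlapping earlier proposition, the single sentence ``See \cite{BBBK}. The admissible model structures are sufficient to provide such structures.''), so your outline is simply a more explicit unpacking of the same citation.
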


\indent Over these sites we have from \cite[Definition 5.12]{BBBK} the definition of $(\infty,1)$-stacks, which are defined to be the $(\infty,1)$-fibered categories fibred in $\infty$-groupoids or certain sheaves valued in the corresponding in $\infty$-groupoids. We use the general notation $\mathrm{X}$ to denote such stack.

\chapter{$(\infty,1)$-Categorical Topologization}

\newpage
\section{$(\infty,1)$-Sheafiness}

\noindent \cite{CS1}, \cite{CS2}, \cite{CS3} defined the notation of the so-called analytic solid condensed rings. We use the notation $\mathrm{AnalyticSolidRings}$ to denote the $(\infty,1)$-category of all such rings. Also as in \cite{CS2} these are analytification of certain condensed solid rings in the $\infty$-categories of animation of condensed abelian groups $\mathrm{condensed}_\mathrm{abelian}$. We use the notation 
\begin{align}
{\underset{\mathrm{animation},\mathrm{condensed}_\mathrm{abelian}}{\mathrm{CommutativeRings}}}^{\mathrm{analytification}}
\end{align}
to denote this $(\infty,1)$-category. This $(\infty,1)$-category is stable under limits and colimits, carrying the corresponding solid tensor product $\otimes^\blacksquare$.

\indent We make the following parallel discussion in the condensed mathematics. The adic spaces in the sense of Huber as in \cite{Hu} essentially provide a framework for defining a specture in the anlytic situation associated to a Banach commutative ring $R$, but the sheafiness condition has to be required since the corresponding exact sequence in defining the sheafiness of the obvious structure ring structure might not be always holds for general $R$. But the work of \cite{CS2} provides certain derived structure ring structure which solves the issue. 

\begin{assumption}
Let $R$ be a general commutative Banach ring over a certain base $k$ as in \cite{BK}. For instance if $R/\mathbb{Q}_p$ is defined over $p$-adic number field $\mathbb{Q}_p$, then the discussion in the following will satisfy this requirement. The generality on the ring $R$ can be further generalized by applying the foundation in \cite{Ked1}.
\end{assumption}

\noindent The work of \cite{CS2} defines a site associated to $R$, carrying the topology by taking the corresponding derived analytic rational localization, i.e. the corresponding Koszul complexes in the pure algebraic sense:
\begin{align}
\mathrm{Koszul}_{f,g}(R):=R/^\mathrm{derived}\{g/f\}
\end{align}
with certain induction to define the following:
\begin{align}
\mathrm{Koszul}_{f,g_1,...,g_n}(R).
\end{align}

Then we have the corresponding stack $(\mathrm{Spec}R,\mathcal{O}_{\mathrm{Spec}R})$ over the site 
\begin{align}
{\underset{\mathrm{animation},\mathrm{condensed}_\mathrm{abelian}}{\mathrm{CommutativeRings}}}^{\mathrm{analytification}}
\end{align}
carrying Grothendieck topology by using the rational localization in the derived sense.

\begin{proposition} \mbox{\textbf{(Clausen-Scholze \cite[Proposition 12.18, Proposition 14.2, Proposition 14.7]{CS2})}}
Attached to $R$, we have a general $(\infty,1)$-fiber category/$(\infty,1)$-stack over the site 
\begin{align}
{\underset{\mathrm{animation},\mathrm{condensed}_\mathrm{abelian}}{\mathrm{CommutativeRings}}}^{\mathrm{analytification}}
\end{align}
The obvious structure $(\infty,1)$-presheaf of $(\infty,1)$-ring $\mathcal{O}_{\mathrm{Spec}R}$ is actually an $(\infty,1)$-sheaf. 
\end{proposition}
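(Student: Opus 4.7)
The plan is to follow the strategy of Bambozzi--Kremnizer as in the preceding chapter, but transplanted into the condensed analytic framework of Clausen--Scholze. First I would construct the structure $(\infty,1)$-presheaf $\mathcal{O}_{\mathrm{Spec}R}$ by sending a derived rational localization $\mathrm{Koszul}_{f,g_1,\ldots,g_n}(R)$ to the corresponding animated analytic commutative ring obtained by base-changing $R$ along the canonical derived localization morphism inside ${\underset{\mathrm{animation},\mathrm{condensed}_\mathrm{abelian}}{\mathrm{CommutativeRings}}}^{\mathrm{analytification}}$. This assignment is functorial by the universal property of the Koszul complex and the stability of the target $(\infty,1)$-category under colimits; what actually requires work is descent along rational coverings.

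Next I would recall from \cite{CS2} that the solid tensor product $\otimes^\blacksquare$ is the correct symmetric monoidal structure on the target and that it commutes with the relevant colimits (cf.\ Proposition 12.18 of \cite{CS2}). The key descent input is then the condensed analogue of Tate acyclicity: for a finite rational cover $\{U_i\to\mathrm{Spec}R\}$ produced by Koszul-type derived localizations, the associated \v{C}ech complex
\begin{equation}
R \longrightarrow \prod_i \mathcal{O}(U_i) \longrightarrow \prod_{i,j} \mathcal{O}(U_i)\otimes^\blacksquare_R \mathcal{O}(U_j) \longrightarrow \cdots
\end{equation}
must be exact. I would establish this by reducing, through the standard two-step presentation of a general rational cover, to the single elementary case of the two-piece cover coming from $\mathrm{Koszul}_{f,g}(R)$ and $\mathrm{Koszul}_{g,f}(R)$, where exactness amounts to a short exact sequence of Koszul complexes in the analytic solid setting; this is exactly the content of Propositions 14.2 and 14.7 of \cite{CS2}.

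The main obstacle I anticipate is verifying that the derived rational localization in the condensed setting, formed using $\otimes^\blacksquare$ together with the animation functor, in fact coincides with the Koszul-complex-based construction imported literally from \cite{BK}, since a priori these are two different analytifications of the same naive algebraic recipe. I would handle this by comparing universal properties: both constructions produce the universal animated analytic $R$-algebra in which $g/f$ is power-bounded, and both are idempotent homotopy epimorphisms in their respective $(\infty,1)$-categories, so they must agree. With descent secured on the basis of derived rational localizations, the general $(\infty,1)$-sheaf property on the whole site then follows from the standard extension-from-a-basis theorem for $(\infty,1)$-Grothendieck topologies, exactly in parallel with the $(\infty,1)$-sheafiness statement of \cite{BK} established in the preceding chapter.
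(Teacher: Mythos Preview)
The paper does not supply its own proof of this proposition: it is stated purely as an attribution to Clausen--Scholze, with the specific citations \cite[Proposition 12.18, Proposition 14.2, Proposition 14.7]{CS2} carrying the entire weight, exactly as the parallel Bambozzi--Kremnizer proposition in the preceding chapter is stated as an attribution to \cite{BK}. There is no proof environment following the proposition in the paper; the next item is the corollary, whose one-line proof simply invokes the proposition.

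Your proposal therefore goes well beyond what the paper does: you are sketching the actual argument behind those three cited propositions rather than merely pointing to them. The sketch itself is sound and tracks the structure of the Clausen--Scholze lectures accurately --- reduction to \v{C}ech acyclicity for finite derived rational covers, further reduction to the elementary two-piece Laurent-type cover, and identification of the derived localization with the Koszul-complex construction via universal properties. The one point I would flag is that your ``main obstacle'' paragraph, comparing the \cite{BK}-style Koszul localization with the condensed-solid one, is not something the paper engages with at all; in this paper the two frameworks are run in parallel chapters without any comparison, so that discussion, while mathematically reasonable, is extraneous to what is being asked here. If the intent is simply to match the paper, a one-line reference to \cite[Proposition 12.18, Proposition 14.2, Proposition 14.7]{CS2} is all that is required.
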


\begin{corollary} \mbox{\textbf{(Clausen-Scholze \cite[Remark 14.10]{CS2})}}
Let $\mathrm{Mod}_\mathcal{O}$ be the $(\infty,1)$-category of all the quasicoherent $(\infty,1)$-presheaves of $\mathcal{O}$-modules. Then the finite projective objects with $\pi_0$ finite projective over $\pi_0\mathcal{O}$ in $\mathrm{Mod}_\mathcal{O}$ are indeed $(\infty,1)$-sheaves.
\end{corollary}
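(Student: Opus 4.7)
The plan is to reduce the assertion to the fact, granted by the previous proposition after Clausen--Scholze, that the structure $(\infty,1)$-presheaf $\mathcal{O}_{\mathrm{Spec}R}$ is already an $(\infty,1)$-sheaf on the site ${\underset{\mathrm{animation},\mathrm{condensed}_\mathrm{abelian}}{\mathrm{CommutativeRings}}}^{\mathrm{analytification}}$ with respect to derived rational localizations. I would then proceed in three short steps, following the template of the analogous Bambozzi--Kremnizer corollary (whose proof was indeed a single-line reference to its own previous proposition).

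First I would verify that finite free $\mathcal{O}$-modules are $(\infty,1)$-sheaves. The sheaf condition on the derived rational localization site amounts to requiring that for each covering by derived rational Koszul complexes $\mathrm{Koszul}_{f,g_1,\ldots,g_n}(R)$ the corresponding augmented \v{C}ech object is a limit diagram. Since the \v{C}ech totalization commutes with finite direct sums in the stable $(\infty,1)$-category of $\mathcal{O}$-modules, sheafiness of $\mathcal{O}$ promotes at once to sheafiness of $\mathcal{O}^n$ for every finite $n$.

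Second, I would lift any finite projective object $M \in \mathrm{Mod}_\mathcal{O}$ with $\pi_0 M$ finite projective over $\pi_0 \mathcal{O}$ to a retract of a finite free module. Choosing a surjection $(\pi_0 \mathcal{O})^n \to \pi_0 M$ and using finite projectivity of $\pi_0 M$, one obtains an idempotent endomorphism $e_0$ of $(\pi_0 \mathcal{O})^n$ whose image is $\pi_0 M$. The standard obstruction theory for derived rings, applied along the Postnikov tower of $\mathcal{O}$, then lifts $e_0$ to a homotopy-coherent idempotent $e$ on $\mathcal{O}^n$ whose splitting recovers $M$. The third and final step is to observe that retracts of $(\infty,1)$-sheaves are $(\infty,1)$-sheaves, since sheafiness is the assertion that a canonical map is an equivalence and equivalences are stable under retracts in any $(\infty,1)$-category. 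Combined with the first two steps this yields that $M$ is a sheaf.

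The main obstacle will be the lifting argument in the second step, where one must check that the relevant obstruction classes live in cohomology groups that vanish for finite projective $\pi_0$-modules and that the lifted endomorphism is in fact coherent to all higher orders rather than merely a homotopy idempotent; this is precisely the point of Remark 14.10 of \cite{CS2}, which I would invoke directly to close the argument.
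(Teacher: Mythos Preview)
Your proposal is correct and follows the same route as the paper: both reduce to the previous proposition asserting that $\mathcal{O}$ itself is an $(\infty,1)$-sheaf. The paper's proof is the single line ``By applying the previous proposition,'' whereas you have unpacked the standard three-step reduction (finite free, retract via idempotent lifting, stability of sheafiness under retracts) that makes this reference meaningful; your version is more informative but not a different argument.
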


\begin{proof}
By applying the previous proposition.
\end{proof}

\chapter{Derived Prismatic Cohomology for Commutative Algebras and Derived Preperfectoidization}

We now follow \cite{Grot1}, \cite{Grot2}, \cite{Grot3}, \cite{Grot4}, \cite{BK}, \cite{BBK}, \cite{BBBK}, \cite{BBM}, \cite{KKM}, \cite{T2}, \cite{Sch2}, \cite{BS}, \cite{BL}, \cite{Dr1}\footnote{One can consider the corresponding absolute prismatic complexes \cite{BS}, \cite{BL2}, \cite{BL}, \cite{Dr1} as well, though our presentation fix a corresponding base prism $(P,I)$ where $P/I$ is assumed to be Banach giving rise to the $p$-adic topology. And we assume the boundedness. } to revisit and discuss the corresponding derived prismatic cohomology for rings in the following:

\begin{notation}\mbox{\rm{(Rings)}}
Recall we have the following six categories on the commutative algebras in the derived sense (let $R$ be $P/I$):
\begin{align}
&\mathrm{sComm}\mathrm{Simplicial}\mathrm{Ind}\mathrm{Seminormed}_R,\\
&\mathrm{sComm}\mathrm{Simplicial}\mathrm{Ind}^m\mathrm{Seminormed}_R,\\
&\mathrm{sComm}\mathrm{Simplicial}\mathrm{Ind}\mathrm{Normed}_R,\\
&\mathrm{sComm}\mathrm{Simplicial}\mathrm{Ind}^m\mathrm{Normed}_R,\\
&\mathrm{sComm}\mathrm{Simplicial}\mathrm{Ind}\mathrm{Banach}_R,\\
&\mathrm{sComm}\mathrm{Simplicial}\mathrm{Ind}^m\mathrm{Banach}_R.	
\end{align}
	
\end{notation}

\begin{definition}
We now consider the rings:
\begin{align}
P/I\left<X_1,...,X_n\right>,n=0,1,2,...	
\end{align}
Then we take the corresponding homotopy colimit completion of these in the stable $\infty$-categories above:
\begin{align}
&\mathrm{sComm}\mathrm{Simplicial}\mathrm{Ind}\mathrm{Seminormed}_R,\\
&\mathrm{sComm}\mathrm{Simplicial}\mathrm{Ind}^m\mathrm{Seminormed}_R,\\
&\mathrm{sComm}\mathrm{Simplicial}\mathrm{Ind}\mathrm{Normed}_R,\\
&\mathrm{sComm}\mathrm{Simplicial}\mathrm{Ind}^m\mathrm{Normed}_R,\\
&\mathrm{sComm}\mathrm{Simplicial}\mathrm{Ind}\mathrm{Banach}_R,\\
&\mathrm{sComm}\mathrm{Simplicial}\mathrm{Ind}^m\mathrm{Banach}_R.	
\end{align}
The resulting $\infty$-categories will be denoted by:
\begin{align}
&\mathrm{sComm}\mathrm{Simplicial}\mathrm{Ind}\mathrm{Seminormed}^\mathrm{formalseriescolimitcomp}_R,\\
&\mathrm{sComm}\mathrm{Simplicial}\mathrm{Ind}^m\mathrm{Seminormed}^\mathrm{formalseriescolimitcomp}_R,\\
&\mathrm{sComm}\mathrm{Simplicial}\mathrm{Ind}\mathrm{Normed}^\mathrm{formalseriescolimitcomp}_R,\\
&\mathrm{sComm}\mathrm{Simplicial}\mathrm{Ind}^m\mathrm{Normed}^\mathrm{formalseriescolimitcomp}_R,\\
&\mathrm{sComm}\mathrm{Simplicial}\mathrm{Ind}\mathrm{Banach}^\mathrm{formalseriescolimitcomp}_R,\\
&\mathrm{sComm}\mathrm{Simplicial}\mathrm{Ind}^m\mathrm{Banach}^\mathrm{formalseriescolimitcomp}_R.	
\end{align}	
\end{definition}

\indent We then follow \cite{BS}, \cite{BL}, \cite{Dr1} to give the following definitions on the prismatic complexes $\Delta_{-/P}$ and the corresponding prismatic stacks as in \cite{BL}, which we will denote that by $\mathrm{CW}_{-/P}$.

\begin{definition}
Following \cite[Construction 7.6]{BS}, \cite[Definition 3.1, Variant 5.1]{BL} we give the following definition. For any ring
\begin{align}
\mathcal{R}=\underset{n}{\mathrm{homotopycolimit}}\mathcal{R}_n	
\end{align}
in the $\infty$-categories:
\begin{align}
&\mathrm{sComm}\mathrm{Simplicial}\mathrm{Ind}\mathrm{Seminormed}^\mathrm{formalseriescolimitcomp}_R,\\
&\mathrm{sComm}\mathrm{Simplicial}\mathrm{Ind}^m\mathrm{Seminormed}^\mathrm{formalseriescolimitcomp}_R,\\
&\mathrm{sComm}\mathrm{Simplicial}\mathrm{Ind}\mathrm{Normed}^\mathrm{formalseriescolimitcomp}_R,\\
&\mathrm{sComm}\mathrm{Simplicial}\mathrm{Ind}^m\mathrm{Normed}^\mathrm{formalseriescolimitcomp}_R,\\
&\mathrm{sComm}\mathrm{Simplicial}\mathrm{Ind}\mathrm{Banach}^\mathrm{formalseriescolimitcomp}_R,\\
&\mathrm{sComm}\mathrm{Simplicial}\mathrm{Ind}^m\mathrm{Banach}^\mathrm{formalseriescolimitcomp}_R,	
\end{align}	
we define the corresponding prismatic cohomology:
\begin{align}
\mathrm{Prism}_{-/P,\mathrm{BBM},\mathrm{analytification}}(\mathcal{R})
\end{align}
as:
\begin{align}
&\mathrm{Prism}_{-/P,\mathrm{BBM},\mathrm{analytification}}(\mathcal{R})\\
&:=[(\underset{n}{\mathrm{homotopycolimit}}~ \mathrm{Prism}_{-/P,,\mathrm{BBM},\mathrm{formalanalytification}}(\mathcal{R}_n))^\wedge_{p,I}]_{\mathrm{BBM},\mathrm{formalanalytification}}	
\end{align}
where the notation means we take the corresponding derived $(p,I)$-completion, then we take the corresponding formal series analytification from \cite[4.2]{BBM}.\\
For any ring
\begin{align}
\mathcal{R}=\underset{n}{\mathrm{homotopycolimit}}\mathcal{R}_n	
\end{align}
in the $\infty$-categories:
\begin{align}
&\mathrm{sComm}\mathrm{Simplicial}\mathrm{Ind}\mathrm{Seminormed}^\mathrm{formalseriescolimitcomp}_R,\\
&\mathrm{sComm}\mathrm{Simplicial}\mathrm{Ind}^m\mathrm{Seminormed}^\mathrm{formalseriescolimitcomp}_R,\\
&\mathrm{sComm}\mathrm{Simplicial}\mathrm{Ind}\mathrm{Normed}^\mathrm{formalseriescolimitcomp}_R,\\
&\mathrm{sComm}\mathrm{Simplicial}\mathrm{Ind}^m\mathrm{Normed}^\mathrm{formalseriescolimitcomp}_R,\\
&\mathrm{sComm}\mathrm{Simplicial}\mathrm{Ind}\mathrm{Banach}^\mathrm{formalseriescolimitcomp}_R,\\
&\mathrm{sComm}\mathrm{Simplicial}\mathrm{Ind}^m\mathrm{Banach}^\mathrm{formalseriescolimitcomp}_R,	
\end{align}	
we define the corresponding prismatic stack:
\begin{align}
\mathrm{CW}_{-/P}(\mathcal{R})
\end{align}
as:
\begin{align}
&\mathrm{CW}_{-/P}(\mathcal{R})\\
&:=[(\underset{n}{\mathrm{homotopycolimit}}~ \mathrm{CW}_{-/P}(\mathcal{R}_n).	
\end{align}
This as in \cite[Definition 3.1, Variant 5.1]{BL} carries the corresponding ringed topos structure 
\begin{align}
(\mathrm{CW}_{-/P}(\mathcal{R}),\mathcal{O}_{\mathrm{CW}_{-/P}(\mathcal{R})}). 
\end{align}
By \cite[Proposition 8.15]{BL} (also see \cite{Dr1}) we have that certain quasicoherent sheaves over this site will reflect completely the corresponding prismatic cohomological information. Therefore the resulting functor here $(\mathrm{CW}_{-/P},\mathcal{O}_{\mathrm{CW}_{-/P}})(-)$ will reflect the corresponding desired information for the functor $\mathrm{Prism}_{-/P}(-)$ as above.
\end{definition}

\indent Now we consider preperfectoidization constuctions:

\begin{definition}
Following \cite{BS}, we give the following definition. For any ring
\begin{align}
\mathcal{R}=\underset{n}{\mathrm{homotopycolimit}}\mathcal{R}_n	
\end{align}
in the $\infty$-categories:
\begin{align}
&\mathrm{sComm}\mathrm{Simplicial}\mathrm{Ind}\mathrm{Seminormed}^\mathrm{formalseriescolimitcomp}_R,\\
&\mathrm{sComm}\mathrm{Simplicial}\mathrm{Ind}^m\mathrm{Seminormed}^\mathrm{formalseriescolimitcomp}_R,\\
&\mathrm{sComm}\mathrm{Simplicial}\mathrm{Ind}\mathrm{Normed}^\mathrm{formalseriescolimitcomp}_R,\\
&\mathrm{sComm}\mathrm{Simplicial}\mathrm{Ind}^m\mathrm{Normed}^\mathrm{formalseriescolimitcomp}_R,\\
&\mathrm{sComm}\mathrm{Simplicial}\mathrm{Ind}\mathrm{Banach}^\mathrm{formalseriescolimitcomp}_R,\\
&\mathrm{sComm}\mathrm{Simplicial}\mathrm{Ind}^m\mathrm{Banach}^\mathrm{formalseriescolimitcomp}_R,	
\end{align}	
we define the corresponding prismatic cohomology:
\begin{align}
\mathrm{Prism}_{-/P,\mathrm{BBM},\mathrm{analytification}}(\mathcal{R})
\end{align}
as:
\begin{align}
&\mathrm{Prism}_{-/P,\mathrm{BBM},\mathrm{analytification}}(\mathcal{R})\\
&:=[(\underset{n}{\mathrm{homotopycolimit}}~ \mathrm{Prism}_{-/P,,\mathrm{BBM},\mathrm{formalanalytification}}(\mathcal{R}_n))^\wedge_{p,I}]_{\mathrm{BBM},\mathrm{formalanalytification}}	
\end{align}
where the notation means we take the corresponding derived $(p,I)$-completion, then we take the corresponding formal series analytification from \cite[4.2]{BBM}. Then as in \cite[Definition 8.2]{BS} we put the following preperfecdtoidization of any object $R$ to be:
\begin{align}
&R^\mathrm{preperfectoidization}:=\\
&\underset{i}{\mathrm{homotopycolimit}}(\mathrm{Prism}_{-/P,\mathrm{BBM},\mathrm{analytification}}(\mathcal{R})\rightarrow \mathrm{Fro}_*\mathrm{Prism}_{-/P,\mathrm{BBM},\mathrm{analytification}}(\mathcal{R})\\
&\rightarrow \mathrm{Fro}_*\mathrm{Fro}_*\mathrm{Prism}_{-/P,\mathrm{BBM},\mathrm{analytification}}(\mathcal{R})\rightarrow...)	
\end{align}
Then the perfectoidization of $R$ is just defined to be:
\begin{align}
R^\mathrm{preperfectoidization}\times P/I.	
\end{align}
For any ring
\begin{align}
\mathcal{R}=\underset{n}{\mathrm{homotopycolimit}}\mathcal{R}_n	
\end{align}
in the $\infty$-categories:
\begin{align}
&\mathrm{sComm}\mathrm{Simplicial}\mathrm{Ind}\mathrm{Seminormed}^\mathrm{formalseriescolimitcomp}_R,\\
&\mathrm{sComm}\mathrm{Simplicial}\mathrm{Ind}^m\mathrm{Seminormed}^\mathrm{formalseriescolimitcomp}_R,\\
&\mathrm{sComm}\mathrm{Simplicial}\mathrm{Ind}\mathrm{Normed}^\mathrm{formalseriescolimitcomp}_R,\\
&\mathrm{sComm}\mathrm{Simplicial}\mathrm{Ind}^m\mathrm{Normed}^\mathrm{formalseriescolimitcomp}_R,\\
&\mathrm{sComm}\mathrm{Simplicial}\mathrm{Ind}\mathrm{Banach}^\mathrm{formalseriescolimitcomp}_R,\\
&\mathrm{sComm}\mathrm{Simplicial}\mathrm{Ind}^m\mathrm{Banach}^\mathrm{formalseriescolimitcomp}_R,	
\end{align}	
we define the corresponding prismatic stack:
\begin{align}
\mathrm{CW}_{-/P}(\mathcal{R})
\end{align}
as:
\begin{align}
&\mathrm{CW}_{-/P}(\mathcal{R})\\
&:=[(\underset{n}{\mathrm{homotopycolimit}}~ \mathrm{CW}_{-/P}(\mathcal{R}_n).	
\end{align}
Then as in \cite[Definition 8.2]{BS} we put the following stacky preperfecdtoidization of any object $R$ to be:
\begin{align}
&R^\mathrm{stackypreperfectoidization}:=\\
&\underset{i}{\mathrm{homotopycolimit}}(\mathrm{CW}_{-/P}(\mathcal{R}))\rightarrow \mathrm{Fro}_*\mathrm{CW}_{-/P}(\mathcal{R})\\
&\rightarrow \mathrm{Fro}_*\mathrm{Fro}_*\mathrm{CW}_{-/P}(\mathcal{R})\rightarrow...)	
\end{align}
Then the perfectoidization of $R$ is just defined to be:
\begin{align}
R^\mathrm{stackypreperfectoidization}\times P/I.	
\end{align}

\end{definition}

\chapter{Derived Topological Hochschild Homology}

We now follow \cite{Grot1}, \cite{Grot2}, \cite{Grot3}, \cite{Grot4}, \cite{BK}, \cite{BBK}, \cite{BBBK}, \cite{BBM}, \cite{KKM}, \cite{T2}, \cite{Sch2}, \cite{BS}, \cite{BL}, \cite{Dr1}, \cite{NS}, \cite{BMS}, \cite{B}, \cite{BHM}\footnote{Our presentation fixes a corresponding base prism $(P,I)$ where $P/I$ is assumed to be Banach giving rise to the $p$-adic topology. And we assume the boundedness. } to revisit and discuss the corresponding derived topological Hochschild homology, topological period homology and topological cyclic homology for rings in the following:

\begin{notation}\mbox{\rm{(Rings)}}
Recall we have the following six categories on the noncommutative algebras in the derived sense (let $R$ be $P/I$):
\begin{align}
&\mathrm{sNoncomm}\mathrm{Simplicial}\mathrm{Ind}\mathrm{Seminormed}_R,\\
&\mathrm{sNoncomm}\mathrm{Simplicial}\mathrm{Ind}^m\mathrm{Seminormed}_R,\\
&\mathrm{sNoncomm}\mathrm{Simplicial}\mathrm{Ind}\mathrm{Normed}_R,\\
&\mathrm{sNoncomm}\mathrm{Simplicial}\mathrm{Ind}^m\mathrm{Normed}_R,\\
&\mathrm{sNoncomm}\mathrm{Simplicial}\mathrm{Ind}\mathrm{Banach}_R,\\
&\mathrm{sNoncomm}\mathrm{Simplicial}\mathrm{Ind}^m\mathrm{Banach}_R.	
\end{align}
	
\end{notation}

\begin{definition}
We now consider the rings\footnote{$Z_1,...,Z_n$ are just assumed to be free variables.}:
\begin{align}
P/I\left<Z_1,...,Z_n\right>,n=0,1,2,...	
\end{align}
Then we take the corresponding homotopy colimit completion of these in the stable $\infty$-categories above:
\begin{align}
&\mathrm{Noncomm}\mathrm{Simplicial}\mathrm{Ind}\mathrm{Seminormed}_R,\\
&\mathrm{Noncomm}\mathrm{Simplicial}\mathrm{Ind}^m\mathrm{Seminormed}_R,\\
&\mathrm{Noncomm}\mathrm{Simplicial}\mathrm{Ind}\mathrm{Normed}_R,\\
&\mathrm{Noncomm}\mathrm{Simplicial}\mathrm{Ind}^m\mathrm{Normed}_R,\\
&\mathrm{Noncomm}\mathrm{Simplicial}\mathrm{Ind}\mathrm{Banach}_R,\\
&\mathrm{Noncomm}\mathrm{Simplicial}\mathrm{Ind}^m\mathrm{Banach}_R.	
\end{align}
The resulting $\infty$-categories will be denoted by:
\begin{align}
&\mathrm{sNoncomm}\mathrm{Simplicial}\mathrm{Ind}\mathrm{Seminormed}^\mathrm{formalseriescolimitcomp}_R,\\
&\mathrm{sNoncomm}\mathrm{Simplicial}\mathrm{Ind}^m\mathrm{Seminormed}^\mathrm{formalseriescolimitcomp}_R,\\
&\mathrm{sNoncomm}\mathrm{Simplicial}\mathrm{Ind}\mathrm{Normed}^\mathrm{formalseriescolimitcomp}_R,\\
&\mathrm{sNoncomm}\mathrm{Simplicial}\mathrm{Ind}^m\mathrm{Normed}^\mathrm{formalseriescolimitcomp}_R,\\
&\mathrm{sNoncomm}\mathrm{Simplicial}\mathrm{Ind}\mathrm{Banach}^\mathrm{formalseriescolimitcomp}_R,\\
&\mathrm{sNoncomm}\mathrm{Simplicial}\mathrm{Ind}^m\mathrm{Banach}^\mathrm{formalseriescolimitcomp}_R.	
\end{align}	
\end{definition}
\

\indent We then follow \cite[Section 2.3]{BMS}, \cite[Chapter 3]{NS} to give the following definitions on the topological Hochschild complexes, topological period complexes and topological cyclic complexes
\begin{align}
 \mathrm{THH}_{-/P,\mathrm{BBM},\mathrm{analytification}},\\
 \mathrm{TP}_{-/P,\mathrm{BBM},\mathrm{analytification}},\\
 \mathrm{TC}_{-/P,\mathrm{BBM},\mathrm{analytification}}. 
\end{align}
 All the constructions are directly applications of functors in \cite[Section 2.3]{BMS}, \cite[Chapter 3]{NS}.

\begin{definition}
Following \cite[Section 2.3]{BMS} and \cite[Chapter 3]{NS} we give the following definition. For any ring
\begin{align}
\mathcal{R}=\underset{n}{\mathrm{homotopycolimit}}\mathcal{R}_n	
\end{align}
in the $\infty$-categories:
\begin{align}
&\mathrm{sNoncomm}\mathrm{Simplicial}\mathrm{Ind}\mathrm{Seminormed}^\mathrm{formalseriescolimitcomp}_R,\\
&\mathrm{sNoncomm}\mathrm{Simplicial}\mathrm{Ind}^m\mathrm{Seminormed}^\mathrm{formalseriescolimitcomp}_R,\\
&\mathrm{sNoncomm}\mathrm{Simplicial}\mathrm{Ind}\mathrm{Normed}^\mathrm{formalseriescolimitcomp}_R,\\
&\mathrm{sNoncomm}\mathrm{Simplicial}\mathrm{Ind}^m\mathrm{Normed}^\mathrm{formalseriescolimitcomp}_R,\\
&\mathrm{sNoncomm}\mathrm{Simplicial}\mathrm{Ind}\mathrm{Banach}^\mathrm{formalseriescolimitcomp}_R,\\
&\mathrm{sNoncomm}\mathrm{Simplicial}\mathrm{Ind}^m\mathrm{Banach}^\mathrm{formalseriescolimitcomp}_R,	
\end{align}	
we define the corresponding topological Hochschild complexes, topological period complexes and topological cyclic complexes $\mathrm{THH}_{-/P}$, $\mathrm{TP}_{-/P}$, $\mathrm{TC}_{-/P}$:
\begin{align}
 \mathrm{THH}_{-/P,\mathrm{BBM},\mathrm{analytification}}(\mathcal{R}),\\
 \mathrm{TP}_{-/P,\mathrm{BBM},\mathrm{analytification}}(\mathcal{R}),\\
 \mathrm{TC}_{-/P,\mathrm{BBM},\mathrm{analytification}}(\mathcal{R}). 
\end{align}
as:
\begin{align}
& \mathrm{THH}_{-/P,\mathrm{BBM},\mathrm{analytification}}(\mathcal{R})\\
&:=[(\underset{n}{\mathrm{homotopycolimit}}~  \mathrm{THH}_{-/P,\mathrm{BBM},\mathrm{analytification}}(\mathcal{R}_n))^\wedge_{p}]_{\mathrm{BBM},\mathrm{formalanalytification}}\\
& \mathrm{TP}_{-/P,\mathrm{BBM},\mathrm{analytification}}(\mathcal{R})\\
&:=[(\underset{n}{\mathrm{homotopycolimit}}~  \mathrm{TP}_{-/P,\mathrm{BBM},\mathrm{analytification}}(\mathcal{R}_n))^\wedge_{p}]_{\mathrm{BBM},\mathrm{formalanalytification}}\\
& \mathrm{TC}_{-/P,\mathrm{BBM},\mathrm{analytification}}(\mathcal{R})\\
&:=[(\underset{n}{\mathrm{homotopycolimit}}~  \mathrm{TC}_{-/P,\mathrm{BBM},\mathrm{analytification}}(\mathcal{R}_n))^\wedge_{p}]_{\mathrm{BBM},\mathrm{formalanalytification}}\\	
\end{align}
where the notation means we take the corresponding algebraic topological $p$-completion, then we take the corresponding formal series analytification from \cite[4.2]{BBM} in the corresponding analogy of the commutative situation.\\
\end{definition}

\indent Now we consider preperfectoidization constuctions:

\begin{definition}
Following \cite{BS}, we give the following definition. For any ring
\begin{align}
\mathcal{R}=\underset{n}{\mathrm{homotopycolimit}}\mathcal{R}_n	
\end{align}
in the $\infty$-categories:
\begin{align}
&\mathrm{sNoncomm}\mathrm{Simplicial}\mathrm{Ind}\mathrm{Seminormed}^\mathrm{formalseriescolimitcomp}_R,\\
&\mathrm{sNoncomm}\mathrm{Simplicial}\mathrm{Ind}^m\mathrm{Seminormed}^\mathrm{formalseriescolimitcomp}_R,\\
&\mathrm{sNoncomm}\mathrm{Simplicial}\mathrm{Ind}\mathrm{Normed}^\mathrm{formalseriescolimitcomp}_R,\\
&\mathrm{sNoncomm}\mathrm{Simplicial}\mathrm{Ind}^m\mathrm{Normed}^\mathrm{formalseriescolimitcomp}_R,\\
&\mathrm{sNoncomm}\mathrm{Simplicial}\mathrm{Ind}\mathrm{Banach}^\mathrm{formalseriescolimitcomp}_R,\\
&\mathrm{sNoncomm}\mathrm{Simplicial}\mathrm{Ind}^m\mathrm{Banach}^\mathrm{formalseriescolimitcomp}_R,	
\end{align}	
we define the corresponding topological Hochschild complexes, topological period complexes and topological cyclic complexes $\mathrm{THH}_{-/P}$, $\mathrm{TP}_{-/P}$, $\mathrm{TC}_{-/P}$:
\begin{align}
 \mathrm{THH}_{-/P,\mathrm{BBM},\mathrm{analytification}}(\mathcal{R}),\\
 \mathrm{TP}_{-/P,\mathrm{BBM},\mathrm{analytification}}(\mathcal{R}),\\
 \mathrm{TC}_{-/P,\mathrm{BBM},\mathrm{analytification}}(\mathcal{R}). 
\end{align}
as:
\begin{align}
& \mathrm{THH}_{-/P,\mathrm{BBM},\mathrm{analytification}}(\mathcal{R})\\
&:=[(\underset{n}{\mathrm{homotopycolimit}}~  \mathrm{THH}_{-/P,\mathrm{BBM},\mathrm{analytification}}(\mathcal{R}_n))^\wedge_{p}]_{\mathrm{BBM},\mathrm{formalanalytification}}\\
& \mathrm{TP}_{-/P,\mathrm{BBM},\mathrm{analytification}}(\mathcal{R})\\
&:=[(\underset{n}{\mathrm{homotopycolimit}}~  \mathrm{TP}_{-/P,\mathrm{BBM},\mathrm{analytification}}(\mathcal{R}_n))^\wedge_{p}]_{\mathrm{BBM},\mathrm{formalanalytification}}\\
& \mathrm{TC}_{-/P,\mathrm{BBM},\mathrm{analytification}}(\mathcal{R})\\
&:=[(\underset{n}{\mathrm{homotopycolimit}}~  \mathrm{TC}_{-/P,\mathrm{BBM},\mathrm{analytification}}(\mathcal{R}_n))^\wedge_{p}]_{\mathrm{BBM},\mathrm{formalanalytification}}\\	
\end{align}
where the notation means we take the corresponding algebraic topological $p$-completion, then we take the corresponding formal series analytification from \cite[4.2]{BBM} in the corresponding analogy of the commutative situation. Then as in \cite[Definition 8.2]{BS} we put the following preperfectoidizations of any object $R$ to be:
\begin{align}
&R^\mathrm{preperfectoidization,THH}:=\\
&\underset{i}{\mathrm{homotopycolimit}}(\mathrm{THH}_{-/P,\mathrm{BBM},\mathrm{analytification}}(\mathcal{R})\rightarrow \mathrm{Fro}_*\mathrm{THH}_{-/P,\mathrm{BBM},\mathrm{analytification}}(\mathcal{R})\\
&\rightarrow \mathrm{Fro}_*\mathrm{Fro}_*\mathrm{THH}_{-/P,\mathrm{BBM},\mathrm{analytification}}(\mathcal{R})\rightarrow...)	\\
&R^\mathrm{preperfectoidization,TP}:=\\
&\underset{i}{\mathrm{homotopycolimit}}(\mathrm{TP}_{-/P,\mathrm{BBM},\mathrm{analytification}}(\mathcal{R})\rightarrow \mathrm{Fro}_*\mathrm{TP}_{-/P,\mathrm{BBM},\mathrm{analytification}}(\mathcal{R})\\
&\rightarrow \mathrm{Fro}_*\mathrm{Fro}_*\mathrm{TP}_{-/P,\mathrm{BBM},\mathrm{analytification}}(\mathcal{R})\rightarrow...)	\\
&R^\mathrm{preperfectoidization,TC}:=\\
&\underset{i}{\mathrm{homotopycolimit}}(\mathrm{TC}_{-/P,\mathrm{BBM},\mathrm{analytification}}(\mathcal{R})\rightarrow \mathrm{Fro}_*\mathrm{TC}_{-/P,\mathrm{BBM},\mathrm{analytification}}(\mathcal{R})\\
&\rightarrow \mathrm{Fro}_*\mathrm{Fro}_*\mathrm{TC}_{-/P,\mathrm{BBM},\mathrm{analytification}}(\mathcal{R})\rightarrow...)	\\
\end{align}
Then the perfectoidizations of $R$ are just defined to be:
\begin{align}
R^\mathrm{preperfectoidization,\sharp}\times P/I.	
\end{align}
Here $\sharp$ represents one of $\mathrm{THH},\mathrm{TP},\mathrm{TC}$.
\end{definition}

\chapter{Derived Prismatic Cohomology for Ringed Toposes}

We now follow \cite{Grot1}, \cite{Grot2}, \cite{Grot3}, \cite{Grot4}, \cite{BK}, \cite{BBK}, \cite{BBBK}, \cite{BBM}, \cite{KKM}, \cite{T2}, \cite{Sch2}, \cite{BS}, \cite{BL}, \cite{Dr1}\footnote{One can consider the corresponding absolute prismatic complexes \cite{BS}, \cite{BL2}, \cite{BL}, \cite{Dr1} as well, though our presentation fix a corresponding base prism $(P,I)$ where $P/I$ is assumed to be Banach giving rise to the $p$-adic topology. And we assume the boundedness.} to revisit and discuss the corresponding derived prismatic cohomology for rings in the following. We first consider the following generating ringed spaces from \cite{BK}:
\begin{align}
(\mathrm{Spa}^\mathrm{BK}P/I\left<X_1,...,X_n\right>,\mathcal{R}_{\mathrm{Spa}^\mathrm{BK}P/I\left<X_1,...,X_n\right>}),n=0,1,2,...
\end{align}
\begin{definition}
We now consider the homotopy limit completion of 
\begin{align}
(\mathrm{Spa}^\mathrm{BK}P/I\left<X_1,...,X_n\right>,\mathcal{R}_{\mathrm{Spa}^\mathrm{BK}P/I\left<X_1,...,X_n\right>}),n=0,1,2,...
\end{align}
in the following $\infty$-categories:
\begin{align}
&\mathrm{Sta}^\mathrm{derivedringed,\sharp}_{\mathrm{sComm}\mathrm{Simplicial}\mathrm{Ind}\mathrm{Seminormed}_R,\mathrm{homotopyepi}},\\
&\mathrm{Sta}^\mathrm{derivedringed,\sharp}_{\mathrm{sComm}\mathrm{Simplicial}\mathrm{Ind}^m\mathrm{Seminormed}_R,\mathrm{homotopyepi}},\\
&\mathrm{Sta}^\mathrm{derivedringed,\sharp}_{\mathrm{sComm}\mathrm{Simplicial}\mathrm{Ind}\mathrm{Normed}_R,\mathrm{homotopyepi}},\\
&\mathrm{Sta}^\mathrm{derivedringed,\sharp}_{\mathrm{sComm}\mathrm{Simplicial}\mathrm{Ind}^m\mathrm{Normed}_R,\mathrm{homotopyepi}},\\
&\mathrm{Sta}^\mathrm{derivedringed,\sharp}_{\mathrm{sComm}\mathrm{Simplicial}\mathrm{Ind}\mathrm{Banach}_R,\mathrm{homotopyepi}},\\
&\mathrm{Sta}^\mathrm{derivedringed,\sharp}_{\mathrm{sComm}\mathrm{Simplicial}\mathrm{Ind}^m\mathrm{Banach}_R,\mathrm{homotopyepi}}.	
\end{align}
Here $\sharp$ represents any category in the following:
\begin{align}
&\mathrm{sComm}\mathrm{Simplicial}\mathrm{Ind}\mathrm{Seminormed}_R,\\
&\mathrm{sComm}\mathrm{Simplicial}\mathrm{Ind}^m\mathrm{Seminormed}_R,\\
&\mathrm{sComm}\mathrm{Simplicial}\mathrm{Ind}\mathrm{Normed}_R,\\
&\mathrm{sComm}\mathrm{Simplicial}\mathrm{Ind}^m\mathrm{Normed}_R,\\
&\mathrm{sComm}\mathrm{Simplicial}\mathrm{Ind}\mathrm{Banach}_R,\\
&\mathrm{sComm}\mathrm{Simplicial}\mathrm{Ind}^m\mathrm{Banach}_R.	
\end{align}	
Here $R=P/I$. The resulting sub $\infty$-categories are denoted by:
\begin{align}
&\mathrm{Proj}^\mathrm{formalspectrum}\mathrm{Sta}^\mathrm{derivedringed,\sharp}_{\mathrm{sComm}\mathrm{Simplicial}\mathrm{Ind}\mathrm{Seminormed}_R,\mathrm{homotopyepi}},\\
&\mathrm{Proj}^\mathrm{formalspectrum}\mathrm{Sta}^\mathrm{derivedringed,\sharp}_{\mathrm{sComm}\mathrm{Simplicial}\mathrm{Ind}^m\mathrm{Seminormed}_R,\mathrm{homotopyepi}},\\
&\mathrm{Proj}^\mathrm{formalspectrum}\mathrm{Sta}^\mathrm{derivedringed,\sharp}_{\mathrm{sComm}\mathrm{Simplicial}\mathrm{Ind}\mathrm{Normed}_R,\mathrm{homotopyepi}},\\
&\mathrm{Proj}^\mathrm{formalspectrum}\mathrm{Sta}^\mathrm{derivedringed,\sharp}_{\mathrm{sComm}\mathrm{Simplicial}\mathrm{Ind}^m\mathrm{Normed}_R,\mathrm{homotopyepi}},\\
&\mathrm{Proj}^\mathrm{formalspectrum}\mathrm{Sta}^\mathrm{derivedringed,\sharp}_{\mathrm{sComm}\mathrm{Simplicial}\mathrm{Ind}\mathrm{Banach}_R,\mathrm{homotopyepi}},\\
&\mathrm{Proj}^\mathrm{formalspectrum}\mathrm{Sta}^\mathrm{derivedringed,\sharp}_{\mathrm{sComm}\mathrm{Simplicial}\mathrm{Ind}^m\mathrm{Banach}_R,\mathrm{homotopyepi}}.	
\end{align}
Here $\sharp$ represents any category in the following:
\begin{align}
&\mathrm{sComm}\mathrm{Simplicial}\mathrm{Ind}\mathrm{Seminormed}_R,\\
&\mathrm{sComm}\mathrm{Simplicial}\mathrm{Ind}^m\mathrm{Seminormed}_R,\\
&\mathrm{sComm}\mathrm{Simplicial}\mathrm{Ind}\mathrm{Normed}_R,\\
&\mathrm{sComm}\mathrm{Simplicial}\mathrm{Ind}^m\mathrm{Normed}_R,\\
&\mathrm{sComm}\mathrm{Simplicial}\mathrm{Ind}\mathrm{Banach}_R,\\
&\mathrm{sComm}\mathrm{Simplicial}\mathrm{Ind}^m\mathrm{Banach}_R.	
\end{align}	
This means that any space $(\mathbb{X},\mathcal{R})$ in the full $\infty$-categories could be written as the following:
\begin{align}
(\mathbb{X},\mathcal{R})=\underset{n}{\mathrm{homotopylimit}}(\mathbb{X}_n,\mathcal{R}_n)	
\end{align}
where we have then:
\begin{align}
\mathcal{R}=\underset{n}{\mathrm{homotopycolimit}}\mathcal{R}_n	
\end{align}
as coherent sheaves over each $\mathbb{X}_n$.

\end{definition}

\indent We then follow \cite{BS}, \cite{BL}, \cite{Dr1} to give the following definitions on the prismatic cohomology presheaf $\Delta_{-/P}$ and the corresponding prismatic stack presheaf as in \cite{BL}, which we will denote that by $\mathrm{CW}_{-/P}$.

\begin{definition}
Following \cite[Construction 7.6]{BS}, \cite[Definition 3.1, Variant 5.1]{BL} we give the following definition. For any space
\begin{align}
(\mathbb{X},\mathcal{R})=\underset{n}{\mathrm{homotopylimit}}(\mathbb{X}_n,\mathcal{R}_n)	
\end{align}
in the $\infty$-categories:
\begin{align}
&\mathrm{Proj}^\mathrm{formalspectrum}\mathrm{Sta}^\mathrm{derivedringed,\sharp}_{\mathrm{sComm}\mathrm{Simplicial}\mathrm{Ind}\mathrm{Seminormed}_R,\mathrm{homotopyepi}},\\
&\mathrm{Proj}^\mathrm{formalspectrum}\mathrm{Sta}^\mathrm{derivedringed,\sharp}_{\mathrm{sComm}\mathrm{Simplicial}\mathrm{Ind}^m\mathrm{Seminormed}_R,\mathrm{homotopyepi}},\\
&\mathrm{Proj}^\mathrm{formalspectrum}\mathrm{Sta}^\mathrm{derivedringed,\sharp}_{\mathrm{sComm}\mathrm{Simplicial}\mathrm{Ind}\mathrm{Normed}_R,\mathrm{homotopyepi}},\\
&\mathrm{Proj}^\mathrm{formalspectrum}\mathrm{Sta}^\mathrm{derivedringed,\sharp}_{\mathrm{sComm}\mathrm{Simplicial}\mathrm{Ind}^m\mathrm{Normed}_R,\mathrm{homotopyepi}},\\
&\mathrm{Proj}^\mathrm{formalspectrum}\mathrm{Sta}^\mathrm{derivedringed,\sharp}_{\mathrm{sComm}\mathrm{Simplicial}\mathrm{Ind}\mathrm{Banach}_R,\mathrm{homotopyepi}},\\
&\mathrm{Proj}^\mathrm{formalspectrum}\mathrm{Sta}^\mathrm{derivedringed,\sharp}_{\mathrm{sComm}\mathrm{Simplicial}\mathrm{Ind}^m\mathrm{Banach}_R,\mathrm{homotopyepi}},	
\end{align}
we define the corresponding prismatic cohomology presheaf:
\begin{align}
\mathrm{Prism}_{-/P,\mathrm{BBM},\mathrm{analytification}}(\mathcal{R})
\end{align}
as:
\begin{align}
&\mathrm{Prism}_{-/P,\mathrm{BBM},\mathrm{analytification}}(\mathcal{R})\\
&:=[(\underset{n}{\mathrm{homotopycolimit}}~ \mathrm{Prism}_{-/P,,\mathrm{BBM},\mathrm{formalanalytification}}(\mathcal{R}_n))^\wedge_{p,I}]_{\mathrm{BBM},\mathrm{formalanalytification}}	
\end{align}
where the notation means we take the corresponding derived $(p,I)$-completion, then we take the corresponding formal series analytification from \cite[4.2]{BBM}.\\
For any space
\begin{align}
(\mathbb{X},\mathcal{R})=\underset{n}{\mathrm{homotopylimit}}(\mathbb{X}_n,\mathcal{R}_n)	
\end{align}
in the $\infty$-categories:
\begin{align}
&\mathrm{Proj}^\mathrm{formalspectrum}\mathrm{Sta}^\mathrm{derivedringed,\sharp}_{\mathrm{sComm}\mathrm{Simplicial}\mathrm{Ind}\mathrm{Seminormed}_R,\mathrm{homotopyepi}},\\
&\mathrm{Proj}^\mathrm{formalspectrum}\mathrm{Sta}^\mathrm{derivedringed,\sharp}_{\mathrm{sComm}\mathrm{Simplicial}\mathrm{Ind}^m\mathrm{Seminormed}_R,\mathrm{homotopyepi}},\\
&\mathrm{Proj}^\mathrm{formalspectrum}\mathrm{Sta}^\mathrm{derivedringed,\sharp}_{\mathrm{sComm}\mathrm{Simplicial}\mathrm{Ind}\mathrm{Normed}_R,\mathrm{homotopyepi}},\\
&\mathrm{Proj}^\mathrm{formalspectrum}\mathrm{Sta}^\mathrm{derivedringed,\sharp}_{\mathrm{sComm}\mathrm{Simplicial}\mathrm{Ind}^m\mathrm{Normed}_R,\mathrm{homotopyepi}},\\
&\mathrm{Proj}^\mathrm{formalspectrum}\mathrm{Sta}^\mathrm{derivedringed,\sharp}_{\mathrm{sComm}\mathrm{Simplicial}\mathrm{Ind}\mathrm{Banach}_R,\mathrm{homotopyepi}},\\
&\mathrm{Proj}^\mathrm{formalspectrum}\mathrm{Sta}^\mathrm{derivedringed,\sharp}_{\mathrm{sComm}\mathrm{Simplicial}\mathrm{Ind}^m\mathrm{Banach}_R,\mathrm{homotopyepi}},	
\end{align}
we define the corresponding prismatic stack presheaf (with stack values):
\begin{align}
\mathrm{CW}_{-/P}(\mathcal{R})
\end{align}
as:
\begin{align}
&\mathrm{CW}_{-/P}(\mathcal{R})\\
&:=[(\underset{n}{\mathrm{homotopycolimit}}~ \mathrm{CW}_{-/P}(\mathcal{R}_n).	
\end{align}
This as in \cite[Definition 3.1, Variant 5.1]{BL} carries the corresponding ringed topos structure 
\begin{align}
(\mathrm{CW}_{-/P}(\mathcal{R}),\mathcal{O}_{\mathrm{CW}_{-/P}(\mathcal{R})}). 
\end{align}
By \cite[Proposition 8.15]{BL} (also see \cite{Dr1}) we have that certain quasicoherent sheaves over this site will reflect completely the corresponding prismatic cohomological information. Therefore the resulting functor here $(\mathrm{CW}_{-/P},\mathcal{O}_{\mathrm{CW}_{-/P}})(-)$ will reflect the corresponding desired information for the functor $\mathrm{Prism}_{-/P}(-)$ as above.
\end{definition}

\chapter{Derived Prismatic Cohomology for Inductive Systems}

We now follow \cite{Grot1}, \cite{Grot2}, \cite{Grot3}, \cite{Grot4}, \cite{BK}, \cite{BBK}, \cite{BBBK}, \cite{BBM}, \cite{KKM}, \cite{T2}, \cite{Sch2}, \cite{BS}, \cite{BL}, \cite{Dr1}\footnote{One can consider the corresponding absolute prismatic complexes \cite{BS}, \cite{BL2}, \cite{BL}, \cite{Dr1} as well, though our presentation fix a corresponding base prism $(P,I)$ where $P/I$ is assumed to be Banach giving rise to the $p$-adic topology. And we assume the boundedness.  } to revisit and discuss the corresponding derived prismatic cohomology for rings in the following. We first consider the following generating ringed spaces from \cite{BK}:
\begin{align}
(\mathrm{Spa}^\mathrm{BK}P/I\left<X_1,...,X_n\right>,\mathcal{R}_{\mathrm{Spa}^\mathrm{BK}P/I\left<X_1,...,X_n\right>}),n=0,1,2,...
\end{align}
\begin{definition}
We now consider the homotopy colimit completion of 
\begin{align}
(\mathrm{Spa}^\mathrm{BK}P/I\left<X_1,...,X_n\right>,\mathcal{R}_{\mathrm{Spa}^\mathrm{BK}P/I\left<X_1,...,X_n\right>}),n=0,1,2,...
\end{align}
in the following $\infty$-categories:
\begin{align}
&\mathrm{Sta}^\mathrm{derivedringed,\sharp}_{\mathrm{sComm}\mathrm{Simplicial}\mathrm{Ind}\mathrm{Seminormed}_R,\mathrm{homotopyepi}},\\
&\mathrm{Sta}^\mathrm{derivedringed,\sharp}_{\mathrm{sComm}\mathrm{Simplicial}\mathrm{Ind}^m\mathrm{Seminormed}_R,\mathrm{homotopyepi}},\\
&\mathrm{Sta}^\mathrm{derivedringed,\sharp}_{\mathrm{sComm}\mathrm{Simplicial}\mathrm{Ind}\mathrm{Normed}_R,\mathrm{homotopyepi}},\\
&\mathrm{Sta}^\mathrm{derivedringed,\sharp}_{\mathrm{sComm}\mathrm{Simplicial}\mathrm{Ind}^m\mathrm{Normed}_R,\mathrm{homotopyepi}},\\
&\mathrm{Sta}^\mathrm{derivedringed,\sharp}_{\mathrm{sComm}\mathrm{Simplicial}\mathrm{Ind}\mathrm{Banach}_R,\mathrm{homotopyepi}},\\
&\mathrm{Sta}^\mathrm{derivedringed,\sharp}_{\mathrm{sComm}\mathrm{Simplicial}\mathrm{Ind}^m\mathrm{Banach}_R,\mathrm{homotopyepi}}.	
\end{align}
Here $\sharp$ represents any category in the following:
\begin{align}
&\mathrm{sComm}\mathrm{Simplicial}\mathrm{Ind}\mathrm{Seminormed}_R,\\
&\mathrm{sComm}\mathrm{Simplicial}\mathrm{Ind}^m\mathrm{Seminormed}_R,\\
&\mathrm{sComm}\mathrm{Simplicial}\mathrm{Ind}\mathrm{Normed}_R,\\
&\mathrm{sComm}\mathrm{Simplicial}\mathrm{Ind}^m\mathrm{Normed}_R,\\
&\mathrm{sComm}\mathrm{Simplicial}\mathrm{Ind}\mathrm{Banach}_R,\\
&\mathrm{sComm}\mathrm{Simplicial}\mathrm{Ind}^m\mathrm{Banach}_R.	
\end{align}	
Here $R=P/I$. The resulting sub $\infty$-categories are denoted by:
\begin{align}
&\mathrm{Ind}^\mathrm{formalspectrum}\mathrm{Sta}^\mathrm{derivedringed,\sharp}_{\mathrm{sComm}\mathrm{Simplicial}\mathrm{Ind}\mathrm{Seminormed}_R,\mathrm{homotopyepi}},\\
&\mathrm{Ind}^\mathrm{formalspectrum}\mathrm{Sta}^\mathrm{derivedringed,\sharp}_{\mathrm{sComm}\mathrm{Simplicial}\mathrm{Ind}^m\mathrm{Seminormed}_R,\mathrm{homotopyepi}},\\
&\mathrm{Ind}^\mathrm{formalspectrum}\mathrm{Sta}^\mathrm{derivedringed,\sharp}_{\mathrm{sComm}\mathrm{Simplicial}\mathrm{Ind}\mathrm{Normed}_R,\mathrm{homotopyepi}},\\
&\mathrm{Ind}^\mathrm{formalspectrum}\mathrm{Sta}^\mathrm{derivedringed,\sharp}_{\mathrm{sComm}\mathrm{Simplicial}\mathrm{Ind}^m\mathrm{Normed}_R,\mathrm{homotopyepi}},\\
&\mathrm{Ind}^\mathrm{formalspectrum}\mathrm{Sta}^\mathrm{derivedringed,\sharp}_{\mathrm{sComm}\mathrm{Simplicial}\mathrm{Ind}\mathrm{Banach}_R,\mathrm{homotopyepi}},\\
&\mathrm{Ind}^\mathrm{formalspectrum}\mathrm{Sta}^\mathrm{derivedringed,\sharp}_{\mathrm{sComm}\mathrm{Simplicial}\mathrm{Ind}^m\mathrm{Banach}_R,\mathrm{homotopyepi}}.	
\end{align}
Here $\sharp$ represents any category in the following:
\begin{align}
&\mathrm{sComm}\mathrm{Simplicial}\mathrm{Ind}\mathrm{Seminormed}_R,\\
&\mathrm{sComm}\mathrm{Simplicial}\mathrm{Ind}^m\mathrm{Seminormed}_R,\\
&\mathrm{sComm}\mathrm{Simplicial}\mathrm{Ind}\mathrm{Normed}_R,\\
&\mathrm{sComm}\mathrm{Simplicial}\mathrm{Ind}^m\mathrm{Normed}_R,\\
&\mathrm{sComm}\mathrm{Simplicial}\mathrm{Ind}\mathrm{Banach}_R,\\
&\mathrm{sComm}\mathrm{Simplicial}\mathrm{Ind}^m\mathrm{Banach}_R.	
\end{align}	
This means that any space $(\mathbb{X},\mathcal{R})$ in the full $\infty$-categories could be written as the following:
\begin{align}
(\mathbb{X},\mathcal{R})=\underset{n}{\mathrm{homotopycolimit}}(\mathbb{X}_n,\mathcal{R}_n)	
\end{align}
where we have then:
\begin{align}
\mathcal{R}=\underset{n}{\mathrm{homotopylimit}}\mathcal{R}_n	
\end{align}
as coherent sheaves over each $\mathbb{X}_n$.

\end{definition}

\indent We then follow \cite{BS}, \cite{BL}, \cite{Dr1} to give the following definitions on the prismatic cohomology presheaf $\Delta_{-/P}$ and the corresponding prismatic stack presheaf as in \cite{BL}, which we will denote that by $\mathrm{CW}_{-/P}$.

\begin{definition}
Following \cite[Construction 7.6]{BS}, \cite[Definition 3.1, Variant 5.1]{BL} we give the following definition. For any space
\begin{align}
(\mathbb{X},\mathcal{R})=\underset{n}{\mathrm{homotopycolimit}}(\mathbb{X}_n,\mathcal{R}_n)	
\end{align}
in the $\infty$-categories:
\begin{align}
&\mathrm{Ind}^\mathrm{formalspectrum}\mathrm{Sta}^\mathrm{derivedringed,\sharp}_{\mathrm{sComm}\mathrm{Simplicial}\mathrm{Ind}\mathrm{Seminormed}_R,\mathrm{homotopyepi}},\\
&\mathrm{Ind}^\mathrm{formalspectrum}\mathrm{Sta}^\mathrm{derivedringed,\sharp}_{\mathrm{sComm}\mathrm{Simplicial}\mathrm{Ind}^m\mathrm{Seminormed}_R,\mathrm{homotopyepi}},\\
&\mathrm{Ind}^\mathrm{formalspectrum}\mathrm{Sta}^\mathrm{derivedringed,\sharp}_{\mathrm{sComm}\mathrm{Simplicial}\mathrm{Ind}\mathrm{Normed}_R,\mathrm{homotopyepi}},\\
&\mathrm{Ind}^\mathrm{formalspectrum}\mathrm{Sta}^\mathrm{derivedringed,\sharp}_{\mathrm{sComm}\mathrm{Simplicial}\mathrm{Ind}^m\mathrm{Normed}_R,\mathrm{homotopyepi}},\\
&\mathrm{Ind}^\mathrm{formalspectrum}\mathrm{Sta}^\mathrm{derivedringed,\sharp}_{\mathrm{sComm}\mathrm{Simplicial}\mathrm{Ind}\mathrm{Banach}_R,\mathrm{homotopyepi}},\\
&\mathrm{Ind}^\mathrm{formalspectrum}\mathrm{Sta}^\mathrm{derivedringed,\sharp}_{\mathrm{sComm}\mathrm{Simplicial}\mathrm{Ind}^m\mathrm{Banach}_R,\mathrm{homotopyepi}},	
\end{align}
we define the corresponding prismatic cohomology presheaf:
\begin{align}
\mathrm{Prism}_{-/P,\mathrm{BBM},\mathrm{analytification}}(\mathcal{R})
\end{align}
as:
\begin{align}
&\mathrm{Prism}_{-/P,\mathrm{BBM},\mathrm{analytification}}(\mathcal{R})\\
&:=[(\underset{n}{\mathrm{homotopylimit}}~ \mathrm{Prism}_{-/P,,\mathrm{BBM},\mathrm{formalanalytification}}(\mathcal{R}_n))^\wedge_{p,I}]_{\mathrm{BBM},\mathrm{formalanalytification}}	
\end{align}
where the notation means we take the corresponding derived $(p,I)$-completion, then we take the corresponding formal series analytification from \cite[4.2]{BBM}.\\
For any space
\begin{align}
(\mathbb{X},\mathcal{R})=\underset{n}{\mathrm{homotopycolimit}}(\mathbb{X}_n,\mathcal{R}_n)	
\end{align}
in the $\infty$-categories:
\begin{align}
&\mathrm{Ind}^\mathrm{formalspectrum}\mathrm{Sta}^\mathrm{derivedringed,\sharp}_{\mathrm{sComm}\mathrm{Simplicial}\mathrm{Ind}\mathrm{Seminormed}_R,\mathrm{homotopyepi}},\\
&\mathrm{Ind}^\mathrm{formalspectrum}\mathrm{Sta}^\mathrm{derivedringed,\sharp}_{\mathrm{sComm}\mathrm{Simplicial}\mathrm{Ind}^m\mathrm{Seminormed}_R,\mathrm{homotopyepi}},\\
&\mathrm{Ind}^\mathrm{formalspectrum}\mathrm{Sta}^\mathrm{derivedringed,\sharp}_{\mathrm{sComm}\mathrm{Simplicial}\mathrm{Ind}\mathrm{Normed}_R,\mathrm{homotopyepi}},\\
&\mathrm{Ind}^\mathrm{formalspectrum}\mathrm{Sta}^\mathrm{derivedringed,\sharp}_{\mathrm{sComm}\mathrm{Simplicial}\mathrm{Ind}^m\mathrm{Normed}_R,\mathrm{homotopyepi}},\\
&\mathrm{Ind}^\mathrm{formalspectrum}\mathrm{Sta}^\mathrm{derivedringed,\sharp}_{\mathrm{sComm}\mathrm{Simplicial}\mathrm{Ind}\mathrm{Banach}_R,\mathrm{homotopyepi}},\\
&\mathrm{Ind}^\mathrm{formalspectrum}\mathrm{Sta}^\mathrm{derivedringed,\sharp}_{\mathrm{sComm}\mathrm{Simplicial}\mathrm{Ind}^m\mathrm{Banach}_R,\mathrm{homotopyepi}},	
\end{align}
we define the corresponding prismatic stack presheaf:
\begin{align}
\mathrm{CW}_{-/P}(\mathcal{R})
\end{align}
as:
\begin{align}
&\mathrm{CW}_{-/P}(\mathcal{R})\\
&:=[(\underset{n}{\mathrm{homotopylimit}}~ \mathrm{CW}_{-/P}(\mathcal{R}_n).	
\end{align}
This as in \cite[Definition 3.1, Variant 5.1]{BL} carries the corresponding ringed topos structure 
\begin{align}
(\mathrm{CW}_{-/P}(\mathcal{R}),\mathcal{O}_{\mathrm{CW}_{-/P}(\mathcal{R})}). 
\end{align}
By \cite[Proposition 8.15]{BL} (also see \cite{Dr1}) we have that certain quasicoherent sheaves over this site will reflect completely the corresponding prismatic cohomological information. Therefore the resulting functor here $(\mathrm{CW}_{-/P},\mathcal{O}_{\mathrm{CW}_{-/P}})(-)$ will reflect the corresponding desired information for the functor $\mathrm{Prism}_{-/P}(-)$ as above.
\end{definition}

\chapter{Robba Stacks in the Commutative Algebra Situations}

\begin{reference}
\cite{KL1}, \cite{KL2}, \cite{Sch1}, \cite{Sch}, \cite{Fon}, \cite{FF}, \cite{F1}, \cite{Ta}.
\end{reference}

Now we consider the construction from \cite{KL1} and \cite{KL2}, and apply the functors in \cite[Definition 9.3.3, Definition 9.3.5, Definition 9.3.11, Definition 9.3.9]{KL1} and \cite{KL2} to the rings and spaces in our current $\infty$-categorical context. Now let $R$ be any analytic field $\mathcal{K}$. Recall from \cite[Definition 9.3.3, Definition 9.3.5, Definition 9.3.11, Definition 9.3.9]{KL1} we have the following functors:
\begin{align}
\widetilde{\mathcal{C}}_{-/R}(.),{\mathbb{B}_e}_{-/R}(.),{\mathbb{B}_\mathrm{dR}^+}_{-/R}(.),{\mathbb{B}_\mathrm{dR}}_{-/R}(.),{FF}_{-/R}(.)	
\end{align}
on the following rings:
\begin{align}
R\left<X_1,...,X_n\right>,n=0,1,2,...	
\end{align}
We then have the situation to promote the functors of rings and stacks to the $\infty$-categorical context as above. Here recall that from \cite[Definition 9.3.3, Definition 9.3.5, Definition 9.3.11, Definition 9.3.9]{KL1} and \cite{KL2}:

\begin{definition}
For any $R\left<X_1,...,X_n\right>$, we have by taking the global section:
\begin{align}
\widetilde{\mathcal{C}}_{-/R}(.)(R\left<X_1,...,X_n\right>):=\widetilde{\mathcal{C}}_{\mathrm{Spa}R\left<X_1,...,X_n\right>/R,\text{pro\'et}}(\mathrm{Spa}R\left<X_1,...,X_n\right>/R,\text{pro\'et})\\
{\mathbb{B}_e}_{-/R}(.)(R\left<X_1,...,X_n\right>):={\mathbb{B}_e}_{\mathrm{Spa}R\left<X_1,...,X_n\right>/R,\text{pro\'et}}(\mathrm{Spa}R\left<X_1,...,X_n\right>/R,\text{pro\'et}),\\
{\mathbb{B}_\mathrm{dR}^+}_{-/R}(.)(R\left<X_1,...,X_n\right>):={\mathbb{B}_\mathrm{dR}^+}_{\mathrm{Spa}R\left<X_1,...,X_n\right>/R,\text{pro\'et}}(\mathrm{Spa}R\left<X_1,...,X_n\right>/R,\text{pro\'et}),\\
{\mathbb{B}_\mathrm{dR}}_{-/R}(.)(R\left<X_1,...,X_n\right>):={\mathbb{B}_\mathrm{dR}}_{\mathrm{Spa}R\left<X_1,...,X_n\right>/R,\text{pro\'et}}(\mathrm{Spa}R\left<X_1,...,X_n\right>/R,\text{pro\'et}),\\
{{FF}}_{-/R}(.)(R\left<X_1,...,X_n\right>):={FF}_{\mathrm{Spa}R\left<X_1,...,X_n\right>/R,\text{pro\'et}}(\mathrm{Spa}R\left<X_1,...,X_n\right>/R,\text{pro\'et}).
\end{align}
And from \cite[Definition 9.3.3, Definition 9.3.5, Definition 9.3.11, Definition 9.3.9]{KL1} and \cite{KL2} we have the notation of $\varphi$-modules, $B$-pairs and the vector bundles over the FF curves as above. We now use the notation $M$ to denote them. We then put $V(.)(R\left<X_1,...,X_n\right>):=M(\mathrm{Spa}R\left<X_1,...,X_n\right>/R,\text{pro\'et})$. For the stack $FF$, this $V$ will be a corresponding vector bundle at the end over $FF_{(R\left<X_1,...,X_n\right>)^\flat/R}$. 	
\end{definition}

\begin{definition}
Following \cite[Definition 9.3.3, Definition 9.3.5, Definition 9.3.11, Definition 9.3.9]{KL1}, \cite{KL2} we give the following definition. For any ring
\begin{align}
\mathcal{R}=\underset{n}{\mathrm{homotopycolimit}}\mathcal{R}_n	
\end{align}
in the $\infty$-categories:
\begin{align}
&\mathrm{sComm}\mathrm{Simplicial}\mathrm{Ind}\mathrm{Seminormed}^\mathrm{formalseriescolimitcomp}_R,\\
&\mathrm{sComm}\mathrm{Simplicial}\mathrm{Ind}^m\mathrm{Seminormed}^\mathrm{formalseriescolimitcomp}_R,\\
&\mathrm{sComm}\mathrm{Simplicial}\mathrm{Ind}\mathrm{Normed}^\mathrm{formalseriescolimitcomp}_R,\\
&\mathrm{sComm}\mathrm{Simplicial}\mathrm{Ind}^m\mathrm{Normed}^\mathrm{formalseriescolimitcomp}_R,\\
&\mathrm{sComm}\mathrm{Simplicial}\mathrm{Ind}\mathrm{Banach}^\mathrm{formalseriescolimitcomp}_R,\\
&\mathrm{sComm}\mathrm{Simplicial}\mathrm{Ind}^m\mathrm{Banach}^\mathrm{formalseriescolimitcomp}_R.	
\end{align}	
we define the corresponding $\infty$-functors:
\begin{align}
\widetilde{\mathcal{C}}_{-/R}(.),{\mathbb{B}_e}_{-/R}(.),{\mathbb{B}_\mathrm{dR}^+}_{-/R}(.),{\mathbb{B}_\mathrm{dR}}_{-/R}(.),{FF}_{-/R}(.),V(.)	
\end{align}
as:
\begin{align}
&\widetilde{\mathcal{C}}_{-/R}(.)(\mathcal{R}):=\underset{n}{\mathrm{homotopycolimit}}~\widetilde{\mathcal{C}}_{-/R}(\mathcal{R}_n),\\
&{\mathbb{B}_e}_{-/R}(.)(\mathcal{R}):=\underset{n}{\mathrm{homotopycolimit}}~{\mathbb{B}_e}_{-/R}(.)(\mathcal{R}_n),\\
&{\mathbb{B}_\mathrm{dR}^+}_{-/R}(.)(\mathcal{R}):=\underset{n}{\mathrm{homotopycolimit}}~{\mathbb{B}_\mathrm{dR}^+}_{-/R}(.)(\mathcal{R}_n),\\
&{\mathbb{B}_\mathrm{dR}}_{-/R}(.)(\mathcal{R}):=\underset{n}{\mathrm{homotopycolimit}}~{\mathbb{B}_\mathrm{dR}}_{-/R}(.)(\mathcal{R}_n),\\
&{{FF}}_{-/R}(.)(\mathcal{R}):=\underset{n}{\mathrm{homotopylimit}}~{{FF}}_{-/R}(.)(\mathcal{R}_n),\\	
&V(.)(\mathcal{R}):=\underset{n}{\mathrm{homotopycolimit}}~V(.)(\mathcal{R}_n).
\end{align}
Here the homotopy colimits are taken in the corresponding colimit completions of the categories where the rings and spaces are living. Here the homotopy limits are taken in the corresponding limit completions of the categories where the rings and spaces are living.
\end{definition}

\indent Now motivated also by \cite{M} after \cite{CS1}, \cite{CS2} and \cite{CS3} we consider the condensed mathematical version of the construction above. Namely we look at the corresponding Clausen-Scholze's animated enhancement of the corresponding analytic condensed solid commutative algebras:
\begin{align}
\mathrm{AnalyticRings}^\mathrm{CS}_R.	
\end{align}
And we consider the corresponding colimit closure of the formal series. We denote the corresponding $\infty$-category as:
\begin{align}
\mathrm{AnalyticRings}^\mathrm{CS,formalcolimitclosure}_R.	
\end{align}

\begin{definition}
Following \cite[Definition 9.3.3, Definition 9.3.5, Definition 9.3.11, Definition 9.3.9]{KL1}, \cite{KL2} we give the following definition. For any ring
\begin{align}
\mathcal{R}=\underset{n}{\mathrm{homotopycolimit}}\mathcal{R}_n	
\end{align}
in the $\infty$-category:
\begin{align}
\mathrm{AnalyticRings}^\mathrm{CS,formalcolimitclosure}_R,	
\end{align}	
we define the corresponding $\infty$-functors:
\begin{align}
\widetilde{\mathcal{C}}_{-/R}(.),{\mathbb{B}_e}_{-/R}(.),{\mathbb{B}_\mathrm{dR}^+}_{-/R}(.),{\mathbb{B}_\mathrm{dR}}_{-/R}(.),{FF}_{-/R}(.),V(.)	
\end{align}
as:
\begin{align}
&\widetilde{\mathcal{C}}_{-/R}(.)(\mathcal{R}):=\underset{n}{\mathrm{homotopycolimit}}~\widetilde{\mathcal{C}}_{-/R}(\mathcal{R}_n),\\
&{\mathbb{B}_e}_{-/R}(.)(\mathcal{R}):=\underset{n}{\mathrm{homotopycolimit}}~{\mathbb{B}_e}_{-/R}(.)(\mathcal{R}_n),\\
&{\mathbb{B}_\mathrm{dR}^+}_{-/R}(.)(\mathcal{R}):=\underset{n}{\mathrm{homotopycolimit}}~{\mathbb{B}_\mathrm{dR}^+}_{-/R}(.)(\mathcal{R}_n),\\
&{\mathbb{B}_\mathrm{dR}}_{-/R}(.)(\mathcal{R}):=\underset{n}{\mathrm{homotopycolimit}}~{\mathbb{B}_\mathrm{dR}}_{-/R}(.)(\mathcal{R}_n),\\
&{{FF}}_{-/R}(.)(\mathcal{R}):=\underset{n}{\mathrm{homotopylimit}}~{{FF}}_{-/R}(.)(\mathcal{R}_n),\\	
&V(.)(\mathcal{R}):=\underset{n}{\mathrm{homotopycolimit}}~V(.)(\mathcal{R}_n).
\end{align}
Here the homotopy colimits are taken in the corresponding colimit completions of the categories where the rings and spaces are living. Here the homotopy limits are taken in the corresponding limit completions of the categories where the rings and spaces are living.
\end{definition}

\begin{proposition}
The corresponding $\infty$-categories of $\varphi$-module functors, $B$-pair functors and vector bundles functors over FF functors are equivalent over:
\begin{align}
&\mathrm{sComm}\mathrm{Simplicial}\mathrm{Ind}\mathrm{Seminormed}^\mathrm{formalseriescolimitcomp}_R,\\
&\mathrm{sComm}\mathrm{Simplicial}\mathrm{Ind}^m\mathrm{Seminormed}^\mathrm{formalseriescolimitcomp}_R,\\
&\mathrm{sComm}\mathrm{Simplicial}\mathrm{Ind}\mathrm{Normed}^\mathrm{formalseriescolimitcomp}_R,\\
&\mathrm{sComm}\mathrm{Simplicial}\mathrm{Ind}^m\mathrm{Normed}^\mathrm{formalseriescolimitcomp}_R,\\
&\mathrm{sComm}\mathrm{Simplicial}\mathrm{Ind}\mathrm{Banach}^\mathrm{formalseriescolimitcomp}_R,\\
&\mathrm{sComm}\mathrm{Simplicial}\mathrm{Ind}^m\mathrm{Banach}^\mathrm{formalseriescolimitcomp}_R,	
\end{align}
or:
\begin{align}
\mathrm{AnalyticRings}^\mathrm{CS,formalcolimitclosure}_R.	
\end{align}	
\end{proposition}
\begin{proof}
This is the direct consequence of \cite[Theorem 9.3.12]{KL1}.	
\end{proof}

\chapter{Robba Stacks in the Ringed Topos Situations}

\begin{reference}
\cite{KL1}, \cite{KL2}, \cite{Sch1}, \cite{Sch}, \cite{Fon}, \cite{FF}, \cite{F1}, \cite{Ta}.
\end{reference}

Now we consider the construction from \cite[Definition 9.3.3, Definition 9.3.5, Definition 9.3.11, Definition 9.3.9]{KL1} and \cite{KL2}, and apply the functors in \cite[Definition 9.3.3, Definition 9.3.5, Definition 9.3.11, Definition 9.3.9]{KL1} and \cite{KL2} to the rings and spaces in our current $\infty$-categorical context. Now let $R$ be any analytic field $\mathcal{K}$. Recall from \cite[Definition 9.3.3, Definition 9.3.5, Definition 9.3.11, Definition 9.3.9]{KL1} we have the following functors:
\begin{align}
\widetilde{\mathcal{C}}_{-/R}(.),{\mathbb{B}_e}_{-/R}(.),{\mathbb{B}_\mathrm{dR}^+}_{-/R}(.),{\mathbb{B}_\mathrm{dR}}_{-/R}(.),{FF}_{-/R}(.)	
\end{align}
on the following ringed spaces from \cite{BK}:
\begin{align}
(\mathrm{Spa}^\mathrm{BK}R\left<X_1,...,X_n\right>,\mathcal{O}_{\mathrm{Spa}^\mathrm{BK}R\left<X_1,...,X_n\right>}),n=0,1,2,...	
\end{align}
We then have the situation to promote the functors of rings and stacks to the $\infty$-categorical context as above. Here recall that from \cite[Definition 9.3.3, Definition 9.3.5, Definition 9.3.11, Definition 9.3.9]{KL1} and \cite{KL2}:

\begin{definition}
For any $\mathrm{Spa}^\mathrm{BK}R\left<X_1,...,X_n\right>$, we have by taking the global section:
\begin{align}
\widetilde{\mathcal{C}}_{-/R}(.)(\mathcal{O}_{\mathrm{Spa}^\mathrm{BK}R\left<X_1,...,X_n\right>})&(\mathrm{Spa}^\mathrm{BK}R\left<X_1,...,X_m\right>):=\\
&\widetilde{\mathcal{C}}_{\mathrm{Spa}R\left<X_1,...,X_m\right>/R,\text{pro\'et}}(\mathrm{Spa}R\left<X_1,...,X_m\right>/R,\text{pro\'et})\\
{\mathbb{B}_e}_{-/R}(.)(\mathcal{O}_{\mathrm{Spa}^\mathrm{BK}R\left<X_1,...,X_n\right>})&(\mathrm{Spa}^\mathrm{BK}R\left<X_1,...,X_m\right>):=\\
&{\mathbb{B}_e}_{\mathrm{Spa}R\left<X_1,...,X_m\right>/R,\text{pro\'et}}(\mathrm{Spa}R\left<X_1,...,X_m\right>/R,\text{pro\'et}),\\
{\mathbb{B}_\mathrm{dR}^+}_{-/R}(.)(\mathcal{O}_{\mathrm{Spa}^\mathrm{BK}R\left<X_1,...,X_n\right>})&(\mathrm{Spa}^\mathrm{BK}R\left<X_1,...,X_m\right>):=\\
&{\mathbb{B}_\mathrm{dR}^+}_{\mathrm{Spa}R\left<X_1,...,X_m\right>/R,\text{pro\'et}}(\mathrm{Spa}R\left<X_1,...,X_m\right>/R,\text{pro\'et}),\\
{\mathbb{B}_\mathrm{dR}}_{-/R}(.)(\mathcal{O}_{\mathrm{Spa}^\mathrm{BK}R\left<X_1,...,X_n\right>})&(\mathrm{Spa}^\mathrm{BK}R\left<X_1,...,X_m\right>):=\\
&{\mathbb{B}_\mathrm{dR}}_{\mathrm{Spa}R\left<X_1,...,X_m\right>/R,\text{pro\'et}}(\mathrm{Spa}R\left<X_1,...,X_m\right>/R,\text{pro\'et}),\\
{{FF}}_{-/R}(.)(\mathcal{O}_{\mathrm{Spa}^\mathrm{BK}R\left<X_1,...,X_n\right>})&(\mathrm{Spa}^\mathrm{BK}R\left<X_1,...,X_m\right>):=\\
&{FF}_{\mathrm{Spa}R\left<X_1,...,X_m\right>/R,\text{pro\'et}}(\mathrm{Spa}R\left<X_1,...,X_m\right>/R,\text{pro\'et}).
\end{align}
And from \cite[Definition 9.3.3, Definition 9.3.5, Definition 9.3.11, Definition 9.3.9]{KL1} and \cite{KL2} we have the notation of $\varphi$-modules, $B$-pairs and the vector bundles over the FF curves as above. We now use the notation $M$ to denote them. We then put $V(.)(\mathcal{O}_{\mathrm{Spa}^\mathrm{BK}R\left<X_1,...,X_n\right>})(\mathrm{Spa}^\mathrm{BK}R\left<X_1,...,X_m\right>):=M(\mathrm{Spa}R\left<X_1,...,X_m\right>/R,\text{pro\'et})$. For the stack $FF$, this $V$ will be a corresponding vector bundle at the end over $FF_{(R\left<X_1,...,X_n\right>)^\flat/R}$. 	
\end{definition}

\begin{definition}
Following \cite[Definition 9.3.3, Definition 9.3.5, Definition 9.3.11, Definition 9.3.9]{KL1}, \cite{KL2} we give the following definition. For any space
\begin{align}
(\mathbb{X},\mathcal{R})=\underset{n}{\mathrm{homotopylimit}}(\mathbb{X}_n,\mathcal{R}_n)	
\end{align}
in the $\infty$-categories:
\begin{align}
&\mathrm{Proj}^\mathrm{formalspectrum}\mathrm{Sta}^\mathrm{derivedringed,\sharp}_{\mathrm{sComm}\mathrm{Simplicial}\mathrm{Ind}\mathrm{Seminormed}_R,\mathrm{homotopyepi}},\\
&\mathrm{Proj}^\mathrm{formalspectrum}\mathrm{Sta}^\mathrm{derivedringed,\sharp}_{\mathrm{sComm}\mathrm{Simplicial}\mathrm{Ind}^m\mathrm{Seminormed}_R,\mathrm{homotopyepi}},\\
&\mathrm{Proj}^\mathrm{formalspectrum}\mathrm{Sta}^\mathrm{derivedringed,\sharp}_{\mathrm{sComm}\mathrm{Simplicial}\mathrm{Ind}\mathrm{Normed}_R,\mathrm{homotopyepi}},\\
&\mathrm{Proj}^\mathrm{formalspectrum}\mathrm{Sta}^\mathrm{derivedringed,\sharp}_{\mathrm{sComm}\mathrm{Simplicial}\mathrm{Ind}^m\mathrm{Normed}_R,\mathrm{homotopyepi}},\\
&\mathrm{Proj}^\mathrm{formalspectrum}\mathrm{Sta}^\mathrm{derivedringed,\sharp}_{\mathrm{sComm}\mathrm{Simplicial}\mathrm{Ind}\mathrm{Banach}_R,\mathrm{homotopyepi}},\\
&\mathrm{Proj}^\mathrm{formalspectrum}\mathrm{Sta}^\mathrm{derivedringed,\sharp}_{\mathrm{sComm}\mathrm{Simplicial}\mathrm{Ind}^m\mathrm{Banach}_R,\mathrm{homotopyepi}},	
\end{align}
we define the corresponding $\infty$-functors:
\begin{align}
\widetilde{\mathcal{C}}_{-/R}(.),{\mathbb{B}_e}_{-/R}(.),{\mathbb{B}_\mathrm{dR}^+}_{-/R}(.),{\mathbb{B}_\mathrm{dR}}_{-/R}(.),{FF}_{-/R}(.),V(.)	
\end{align}
as:
\begin{align}
&\widetilde{\mathcal{C}}_{-/R}(.)(\mathcal{R}):=\underset{n}{\mathrm{homotopycolimit}}~\widetilde{\mathcal{C}}_{-/R}(\mathcal{R}_n),\\
&{\mathbb{B}_e}_{-/R}(.)(\mathcal{R}):=\underset{n}{\mathrm{homotopycolimit}}~{\mathbb{B}_e}_{-/R}(.)(\mathcal{R}_n),\\
&{\mathbb{B}_\mathrm{dR}^+}_{-/R}(.)(\mathcal{R}):=\underset{n}{\mathrm{homotopycolimit}}~{\mathbb{B}_\mathrm{dR}^+}_{-/R}(.)(\mathcal{R}_n),\\
&{\mathbb{B}_\mathrm{dR}}_{-/R}(.)(\mathcal{R}):=\underset{n}{\mathrm{homotopycolimit}}~{\mathbb{B}_\mathrm{dR}}_{-/R}(.)(\mathcal{R}_n),\\
&{{FF}}_{-/R}(.)(\mathcal{R}):=\underset{n}{\mathrm{homotopylimit}}~{{FF}}_{-/R}(.)(\mathcal{R}_n),\\	
&V(.)(\mathcal{R}):=\underset{n}{\mathrm{homotopycolimit}}~V(.)(\mathcal{R}_n).
\end{align}
Here the homotopy colimits are taken in the corresponding colimit completions of the categories where the rings and spaces are living. Here the homotopy limits are taken in the corresponding limit completions of the categories where the rings and spaces are living.
\end{definition}

\chapter{Robba Stacks in the Inductive System Situations}

\begin{reference}
\cite{KL1}, \cite{KL2}, \cite{Sch1}, \cite{Sch}, \cite{Fon}, \cite{FF}, \cite{F1}, \cite{Ta}.
\end{reference}

Now we consider the construction from \cite[Definition 9.3.3, Definition 9.3.5, Definition 9.3.11, Definition 9.3.9]{KL1} and \cite{KL2}, and apply the functors in \cite[Definition 9.3.3, Definition 9.3.5, Definition 9.3.11, Definition 9.3.9]{KL1} and \cite{KL2} to the rings and spaces in our current $\infty$-categorical context. Now let $R$ be any analytic field $\mathcal{K}$. Recall from \cite[Definition 9.3.3, Definition 9.3.5, Definition 9.3.11, Definition 9.3.9]{KL1} we have the following functors:
\begin{align}
\widetilde{\mathcal{C}}_{-/R}(.),{\mathbb{B}_e}_{-/R}(.),{\mathbb{B}_\mathrm{dR}^+}_{-/R}(.),{\mathbb{B}_\mathrm{dR}}_{-/R}(.),{FF}_{-/R}(.)	
\end{align}
on the following ringed spaces from \cite{BK}:
\begin{align}
(\mathrm{Spa}^\mathrm{BK}R\left<X_1,...,X_n\right>,\mathcal{O}_{\mathrm{Spa}^\mathrm{BK}R\left<X_1,...,X_n\right>}),n=0,1,2,...	
\end{align}
We then have the situation to promote the functors of rings and stacks to the $\infty$-categorical context as above. Here recall that from \cite[Definition 9.3.3, Definition 9.3.5, Definition 9.3.11, Definition 9.3.9]{KL1} and \cite{KL2}:

\begin{definition}
For any $\mathrm{Spa}^\mathrm{BK}R\left<X_1,...,X_n\right>$, we have by taking the global section:
\begin{align}
\widetilde{\mathcal{C}}_{-/R}(.)(\mathcal{O}_{\mathrm{Spa}^\mathrm{BK}R\left<X_1,...,X_n\right>})&(\mathrm{Spa}^\mathrm{BK}R\left<X_1,...,X_m\right>):=\\
&\widetilde{\mathcal{C}}_{\mathrm{Spa}R\left<X_1,...,X_m\right>/R,\text{pro\'et}}(\mathrm{Spa}R\left<X_1,...,X_m\right>/R,\text{pro\'et})\\
{\mathbb{B}_e}_{-/R}(.)(\mathcal{O}_{\mathrm{Spa}^\mathrm{BK}R\left<X_1,...,X_n\right>})&(\mathrm{Spa}^\mathrm{BK}R\left<X_1,...,X_m\right>):=\\
&{\mathbb{B}_e}_{\mathrm{Spa}R\left<X_1,...,X_m\right>/R,\text{pro\'et}}(\mathrm{Spa}R\left<X_1,...,X_m\right>/R,\text{pro\'et}),\\
{\mathbb{B}_\mathrm{dR}^+}_{-/R}(.)(\mathcal{O}_{\mathrm{Spa}^\mathrm{BK}R\left<X_1,...,X_n\right>})&(\mathrm{Spa}^\mathrm{BK}R\left<X_1,...,X_m\right>):=\\
&{\mathbb{B}_\mathrm{dR}^+}_{\mathrm{Spa}R\left<X_1,...,X_m\right>/R,\text{pro\'et}}(\mathrm{Spa}R\left<X_1,...,X_m\right>/R,\text{pro\'et}),\\
{\mathbb{B}_\mathrm{dR}}_{-/R}(.)(\mathcal{O}_{\mathrm{Spa}^\mathrm{BK}R\left<X_1,...,X_n\right>})&(\mathrm{Spa}^\mathrm{BK}R\left<X_1,...,X_m\right>):=\\
&{\mathbb{B}_\mathrm{dR}}_{\mathrm{Spa}R\left<X_1,...,X_m\right>/R,\text{pro\'et}}(\mathrm{Spa}R\left<X_1,...,X_m\right>/R,\text{pro\'et}),\\
{{FF}}_{-/R}(.)(\mathcal{O}_{\mathrm{Spa}^\mathrm{BK}R\left<X_1,...,X_n\right>})&(\mathrm{Spa}^\mathrm{BK}R\left<X_1,...,X_m\right>):=\\
&{FF}_{\mathrm{Spa}R\left<X_1,...,X_m\right>/R,\text{pro\'et}}(\mathrm{Spa}R\left<X_1,...,X_m\right>/R,\text{pro\'et}).
\end{align}
And from \cite[Definition 9.3.3, Definition 9.3.5, Definition 9.3.11, Definition 9.3.9]{KL1} and \cite{KL2} we have the notation of $\varphi$-modules, $B$-pairs and the vector bundles over the FF curves as above. We now use the notation $M$ to denote them. We then put $V(.)(\mathcal{O}_{\mathrm{Spa}^\mathrm{BK}R\left<X_1,...,X_n\right>})(\mathrm{Spa}^\mathrm{BK}R\left<X_1,...,X_m\right>):=M(\mathrm{Spa}R\left<X_1,...,X_m\right>/R,\text{pro\'et})$. For the stack $FF$, this $V$ will be a corresponding vector bundle at the end over $FF_{(R\left<X_1,...,X_n\right>)^\flat/R}$. 	
\end{definition}

\begin{definition}
Following \cite[Definition 9.3.3, Definition 9.3.5, Definition 9.3.11, Definition 9.3.9]{KL1}, \cite{KL2} we give the following definition. For any space
\begin{align}
(\mathbb{X},\mathcal{R})=\underset{n}{\mathrm{homotopycolimit}}(\mathbb{X}_n,\mathcal{R}_n)	
\end{align}
in the $\infty$-categories:
\begin{align}
&\mathrm{Ind}^\mathrm{formalspectrum}\mathrm{Sta}^\mathrm{derivedringed,\sharp}_{\mathrm{sComm}\mathrm{Simplicial}\mathrm{Ind}\mathrm{Seminormed}_R,\mathrm{homotopyepi}},\\
&\mathrm{Ind}^\mathrm{formalspectrum}\mathrm{Sta}^\mathrm{derivedringed,\sharp}_{\mathrm{sComm}\mathrm{Simplicial}\mathrm{Ind}^m\mathrm{Seminormed}_R,\mathrm{homotopyepi}},\\
&\mathrm{Ind}^\mathrm{formalspectrum}\mathrm{Sta}^\mathrm{derivedringed,\sharp}_{\mathrm{sComm}\mathrm{Simplicial}\mathrm{Ind}\mathrm{Normed}_R,\mathrm{homotopyepi}},\\
&\mathrm{Ind}^\mathrm{formalspectrum}\mathrm{Sta}^\mathrm{derivedringed,\sharp}_{\mathrm{sComm}\mathrm{Simplicial}\mathrm{Ind}^m\mathrm{Normed}_R,\mathrm{homotopyepi}},\\
&\mathrm{Ind}^\mathrm{formalspectrum}\mathrm{Sta}^\mathrm{derivedringed,\sharp}_{\mathrm{sComm}\mathrm{Simplicial}\mathrm{Ind}\mathrm{Banach}_R,\mathrm{homotopyepi}},\\
&\mathrm{Ind}^\mathrm{formalspectrum}\mathrm{Sta}^\mathrm{derivedringed,\sharp}_{\mathrm{sComm}\mathrm{Simplicial}\mathrm{Ind}^m\mathrm{Banach}_R,\mathrm{homotopyepi}},	
\end{align}

we define the corresponding $\infty$-functors:
\begin{align}
\widetilde{\mathcal{C}}_{-/R}(.),{\mathbb{B}_e}_{-/R}(.),{\mathbb{B}_\mathrm{dR}^+}_{-/R}(.),{\mathbb{B}_\mathrm{dR}}_{-/R}(.),{FF}_{-/R}(.),V(.)	
\end{align}
as:
\begin{align}
&\widetilde{\mathcal{C}}_{-/R}(.)(\mathcal{R}):=\underset{n}{\mathrm{homotopylimit}}~\widetilde{\mathcal{C}}_{-/R}(\mathcal{R}_n),\\
&{\mathbb{B}_e}_{-/R}(.)(\mathcal{R}):=\underset{n}{\mathrm{homotopylimit}}~{\mathbb{B}_e}_{-/R}(.)(\mathcal{R}_n),\\
&{\mathbb{B}_\mathrm{dR}^+}_{-/R}(.)(\mathcal{R}):=\underset{n}{\mathrm{homotopylimit}}~{\mathbb{B}_\mathrm{dR}^+}_{-/R}(.)(\mathcal{R}_n),\\
&{\mathbb{B}_\mathrm{dR}}_{-/R}(.)(\mathcal{R}):=\underset{n}{\mathrm{homotopylimit}}~{\mathbb{B}_\mathrm{dR}}_{-/R}(.)(\mathcal{R}_n),\\
&{{FF}}_{-/R}(.)(\mathcal{R}):=\underset{n}{\mathrm{homotopycolimit}}~{{FF}}_{-/R}(.)(\mathcal{R}_n),\\	
&V(.)(\mathcal{R}):=\underset{n}{\mathrm{homotopylimit}}~V(.)(\mathcal{R}_n).
\end{align}
\end{definition}

\

\begin{remark}
Here the homotopy colimits are taken in the corresponding colimit completions of the categories where the rings and spaces are living. Here the homotopy limits are taken in the corresponding limit completions of the categories where the rings and spaces are living. For instance, the Robba functor $\widetilde{\mathcal{C}}_{-/R}(.)$ takes value in ind-Fr\'echet rings $\mathrm{Ind}\text{Fr\'echet}_R$, we then consider the corresponding homotopy limit closure $\overline{\mathrm{Ind}\text{Fr\'echet}}^{\mathrm{homotopylimit}}_R$. For instance, the Fargues-Fontaine stack functors ${{FF}}_{-/R}(.)(-)$ take value in the preadic spaces $\mathrm{PreAdic}_R$, then we consider the corresponding homotopy colimit closure $\overline{\mathrm{PreAdic}_R}^{\mathrm{homotopycolimit}}$. 	
\end{remark}

\chapter{Generalizations}

The generalizations here are closely following the following work:

\begin{reference}
\cite{BHS}, \cite{BS}, \cite{BL3}, \cite{BL4}, \cite{Fon2}.\footnote{And we assume that $p>2$. We use the notation $k$ to represent the element $2\pi\sqrt{-1}$ in the $p$-adic complex analysis, of course over $B_\mathrm{dR}=\mathbb{C}_p[[k]][1/k]$ not the obvious $\mathbb{C}_p=\overline{\mathbb{Q}_p}^\wedge$, which is the crucial difference when we do $p$-adic analysis, see Fontaine's 1982 breakthrough \cite{Fon}.}
\end{reference}

\newpage

\section{Derived Generalized Prismatic Cohomology}

We now follow \cite{Grot1}, \cite{Grot2}, \cite{Grot3}, \cite{Grot4}, \cite{BK}, \cite{BBK}, \cite{BBBK}, \cite{BBM}, \cite{KKM}, \cite{T2}, \cite{Sch2}, \cite{BS}, \cite{BL}, \cite{Dr1}\footnote{One can consider the corresponding absolute prismatic complexes \cite{BS}, \cite{BL2}, \cite{BL}, \cite{Dr1} as well, though our presentation fix a corresponding base prism $(P,I)$ where $P/I$ is assumed to be Banach giving rise to the $p$-adic topology. And we assume the boundedness. } to revisit and discuss the corresponding derived prismatic cohomology for rings in the following:

\begin{notation}\mbox{\rm{(Rings)}}
Recall we have the following six categories on the commutative algebras in the derived sense (let $R$ be $P/I$):
\begin{align}
&\mathrm{sComm}\mathrm{Simplicial}\mathrm{Ind}\mathrm{Seminormed}_R,\\
&\mathrm{sComm}\mathrm{Simplicial}\mathrm{Ind}^m\mathrm{Seminormed}_R,\\
&\mathrm{sComm}\mathrm{Simplicial}\mathrm{Ind}\mathrm{Normed}_R,\\
&\mathrm{sComm}\mathrm{Simplicial}\mathrm{Ind}^m\mathrm{Normed}_R,\\
&\mathrm{sComm}\mathrm{Simplicial}\mathrm{Ind}\mathrm{Banach}_R,\\
&\mathrm{sComm}\mathrm{Simplicial}\mathrm{Ind}^m\mathrm{Banach}_R.	
\end{align}
	
\end{notation}

\begin{definition}
We now consider the rings:
\begin{align}
P/I\left<X_1,...,X_n\right>,n=0,1,2,...	
\end{align}
Then we take the corresponding homotopy colimit completion of these in the stable $\infty$-categories above:
\begin{align}
&\mathrm{sComm}\mathrm{Simplicial}\mathrm{Ind}\mathrm{Seminormed}_R,\\
&\mathrm{sComm}\mathrm{Simplicial}\mathrm{Ind}^m\mathrm{Seminormed}_R,\\
&\mathrm{sComm}\mathrm{Simplicial}\mathrm{Ind}\mathrm{Normed}_R,\\
&\mathrm{sComm}\mathrm{Simplicial}\mathrm{Ind}^m\mathrm{Normed}_R,\\
&\mathrm{sComm}\mathrm{Simplicial}\mathrm{Ind}\mathrm{Banach}_R,\\
&\mathrm{sComm}\mathrm{Simplicial}\mathrm{Ind}^m\mathrm{Banach}_R.	
\end{align}
The resulting $\infty$-categories will be denoted by:
\begin{align}
&\mathrm{sComm}\mathrm{Simplicial}\mathrm{Ind}\mathrm{Seminormed}^\mathrm{formalseriescolimitcomp}_R,\\
&\mathrm{sComm}\mathrm{Simplicial}\mathrm{Ind}^m\mathrm{Seminormed}^\mathrm{formalseriescolimitcomp}_R,\\
&\mathrm{sComm}\mathrm{Simplicial}\mathrm{Ind}\mathrm{Normed}^\mathrm{formalseriescolimitcomp}_R,\\
&\mathrm{sComm}\mathrm{Simplicial}\mathrm{Ind}^m\mathrm{Normed}^\mathrm{formalseriescolimitcomp}_R,\\
&\mathrm{sComm}\mathrm{Simplicial}\mathrm{Ind}\mathrm{Banach}^\mathrm{formalseriescolimitcomp}_R,\\
&\mathrm{sComm}\mathrm{Simplicial}\mathrm{Ind}^m\mathrm{Banach}^\mathrm{formalseriescolimitcomp}_R.	
\end{align}	
\end{definition}

\indent We then follow \cite{BS}, \cite{BL}, \cite{Dr1} to give the following definitions on the prismatic complexes $\Delta_{-/P}$.

\begin{definition}
Following \cite[Construction 7.6]{BS} we give the following definition. For any ring
\begin{align}
\mathcal{R}=\underset{n}{\mathrm{homotopycolimit}}\mathcal{R}_n	
\end{align}
in the $\infty$-categories:
\begin{align}
&\mathrm{sComm}\mathrm{Simplicial}\mathrm{Ind}\mathrm{Seminormed}^\mathrm{formalseriescolimitcomp}_R,\\
&\mathrm{sComm}\mathrm{Simplicial}\mathrm{Ind}^m\mathrm{Seminormed}^\mathrm{formalseriescolimitcomp}_R,\\
&\mathrm{sComm}\mathrm{Simplicial}\mathrm{Ind}\mathrm{Normed}^\mathrm{formalseriescolimitcomp}_R,\\
&\mathrm{sComm}\mathrm{Simplicial}\mathrm{Ind}^m\mathrm{Normed}^\mathrm{formalseriescolimitcomp}_R,\\
&\mathrm{sComm}\mathrm{Simplicial}\mathrm{Ind}\mathrm{Banach}^\mathrm{formalseriescolimitcomp}_R,\\
&\mathrm{sComm}\mathrm{Simplicial}\mathrm{Ind}^m\mathrm{Banach}^\mathrm{formalseriescolimitcomp}_R,	
\end{align}	
we define the corresponding generalized prismatic cohomology:
\begin{align}
\mathrm{Prism}^{\sqrt{I}}_{-/P,\mathrm{BBM},\mathrm{analytification}}(\mathcal{R})
\end{align}
as:
\begin{align}
&\mathrm{Prism}^{\sqrt{I}}_{-/P,\mathrm{BBM},\mathrm{analytification}}(\mathcal{R})\\
&:=\left[\left(\underset{n}{\mathrm{homotopycolimit}}~ (\mathrm{Prism}_{-/P,\mathrm{BBM},\mathrm{formalanalytification}}(\mathcal{R}_n)[\sqrt{I}])^\wedge_{p,\sqrt{I}}\right)^\wedge_{p,\sqrt{I}}\right]_{\mathrm{BBM},\mathrm{formalanalytification}}	
\end{align}
where the notation means we take the corresponding derived $(p,I)$-completion and derived $(p,\sqrt{I})$-completion, then we take the corresponding formal series analytification from \cite[4.2]{BBM}.\\
\end{definition}

\indent Now we consider preperfectoidization constuctions:

\begin{definition}
Following \cite{BS}, we give the following definition. For any ring
\begin{align}
\mathcal{R}=\underset{n}{\mathrm{homotopycolimit}}\mathcal{R}_n	
\end{align}
in the $\infty$-categories:
\begin{align}
&\mathrm{sComm}\mathrm{Simplicial}\mathrm{Ind}\mathrm{Seminormed}^\mathrm{formalseriescolimitcomp}_R,\\
&\mathrm{sComm}\mathrm{Simplicial}\mathrm{Ind}^m\mathrm{Seminormed}^\mathrm{formalseriescolimitcomp}_R,\\
&\mathrm{sComm}\mathrm{Simplicial}\mathrm{Ind}\mathrm{Normed}^\mathrm{formalseriescolimitcomp}_R,\\
&\mathrm{sComm}\mathrm{Simplicial}\mathrm{Ind}^m\mathrm{Normed}^\mathrm{formalseriescolimitcomp}_R,\\
&\mathrm{sComm}\mathrm{Simplicial}\mathrm{Ind}\mathrm{Banach}^\mathrm{formalseriescolimitcomp}_R,\\
&\mathrm{sComm}\mathrm{Simplicial}\mathrm{Ind}^m\mathrm{Banach}^\mathrm{formalseriescolimitcomp}_R,	
\end{align}	
we define the corresponding generalized prismatic cohomology:
\begin{align}
\mathrm{Prism}^{\sqrt{I}}_{-/P,\mathrm{BBM},\mathrm{analytification}}(\mathcal{R})
\end{align}
as:
\begin{align}
&\mathrm{Prism}^{\sqrt{I}}_{-/P,\mathrm{BBM},\mathrm{analytification}}(\mathcal{R})\\
&:=\left[\left(\underset{n}{\mathrm{homotopycolimit}}~ (\mathrm{Prism}_{-/P,\mathrm{BBM},\mathrm{formalanalytification}}(\mathcal{R}_n)[\sqrt{I}])^\wedge_{p,\sqrt{I}}\right)^\wedge_{p,\sqrt{I}}\right]_{\mathrm{BBM},\mathrm{formalanalytification}}	
\end{align}
where the notation means we take the corresponding derived $(p,I)$-completion and derived $(p,\sqrt{I})$-completion, then we take the corresponding formal series analytification from \cite[4.2]{BBM}. Then as in \cite[Definition 8.2]{BS} we put the following generalized  preperfecdtoidization of any object $R$ to be:
\begin{align}
&R^{\mathrm{preperfectoidization},\sqrt{I}}:=\\
&\underset{i}{\mathrm{homotopycolimit}}(\mathrm{Prism}^{\sqrt{I}}_{-/P,\mathrm{BBM},\mathrm{analytification}}(\mathcal{R})\rightarrow \mathrm{Fro}_*\mathrm{Prism}^{\sqrt{I}}_{-/P,\mathrm{BBM},\mathrm{analytification}}(\mathcal{R})\\
&\rightarrow \mathrm{Fro}_*\mathrm{Fro}_*\mathrm{Prism}^{\sqrt{I}}_{-/P,\mathrm{BBM},\mathrm{analytification}}(\mathcal{R})\rightarrow...)	
\end{align}
Then the generalized perfectoidization of $R$ is just defined to be:
\begin{align}
R^\mathrm{preperfectoidization}\times P/I.	
\end{align}

\end{definition}

\newpage
\section{Derived Generalized Topological Hochschild Homology}

We now follow \cite{Grot1}, \cite{Grot2}, \cite{Grot3}, \cite{Grot4}, \cite{BK}, \cite{BBK}, \cite{BBBK}, \cite{BBM}, \cite{KKM}, \cite{T2}, \cite{Sch2}, \cite{BS}, \cite{BL}, \cite{Dr1}, \cite{NS}, \cite{BMS}, \cite{B}, \cite{BHM}\footnote{Our presentation fixes a corresponding base prism $(P,I)$ where $P/I$ is assumed to be Banach giving rise to the $p$-adic topology. And we assume the boundedness. } to revisit and discuss the corresponding derived topological Hochschild homology, topological period homology and topological cyclic homology for rings in the following:

\begin{notation}\mbox{\rm{(Rings)}}
Recall we have the following six categories on the noncommutative algebras in the derived sense (let $R$ be $P/I$):
\begin{align}
&\mathrm{sNoncomm}\mathrm{Simplicial}\mathrm{Ind}\mathrm{Seminormed}_R,\\
&\mathrm{sNoncomm}\mathrm{Simplicial}\mathrm{Ind}^m\mathrm{Seminormed}_R,\\
&\mathrm{sNoncomm}\mathrm{Simplicial}\mathrm{Ind}\mathrm{Normed}_R,\\
&\mathrm{sNoncomm}\mathrm{Simplicial}\mathrm{Ind}^m\mathrm{Normed}_R,\\
&\mathrm{sNoncomm}\mathrm{Simplicial}\mathrm{Ind}\mathrm{Banach}_R,\\
&\mathrm{sNoncomm}\mathrm{Simplicial}\mathrm{Ind}^m\mathrm{Banach}_R.	
\end{align}
	
\end{notation}

\begin{definition}
We now consider the rings\footnote{$Z_1,...,Z_n$ are just assumed to be free variables.}:
\begin{align}
P/I\left<Z_1,...,Z_n\right>,n=0,1,2,...	
\end{align}
Then we take the corresponding homotopy colimit completion of these in the stable $\infty$-categories above:
\begin{align}
&\mathrm{Noncomm}\mathrm{Simplicial}\mathrm{Ind}\mathrm{Seminormed}_R,\\
&\mathrm{Noncomm}\mathrm{Simplicial}\mathrm{Ind}^m\mathrm{Seminormed}_R,\\
&\mathrm{Noncomm}\mathrm{Simplicial}\mathrm{Ind}\mathrm{Normed}_R,\\
&\mathrm{Noncomm}\mathrm{Simplicial}\mathrm{Ind}^m\mathrm{Normed}_R,\\
&\mathrm{Noncomm}\mathrm{Simplicial}\mathrm{Ind}\mathrm{Banach}_R,\\
&\mathrm{Noncomm}\mathrm{Simplicial}\mathrm{Ind}^m\mathrm{Banach}_R.	
\end{align}
The resulting $\infty$-categories will be denoted by:
\begin{align}
&\mathrm{sNoncomm}\mathrm{Simplicial}\mathrm{Ind}\mathrm{Seminormed}^\mathrm{formalseriescolimitcomp}_R,\\
&\mathrm{sNoncomm}\mathrm{Simplicial}\mathrm{Ind}^m\mathrm{Seminormed}^\mathrm{formalseriescolimitcomp}_R,\\
&\mathrm{sNoncomm}\mathrm{Simplicial}\mathrm{Ind}\mathrm{Normed}^\mathrm{formalseriescolimitcomp}_R,\\
&\mathrm{sNoncomm}\mathrm{Simplicial}\mathrm{Ind}^m\mathrm{Normed}^\mathrm{formalseriescolimitcomp}_R,\\
&\mathrm{sNoncomm}\mathrm{Simplicial}\mathrm{Ind}\mathrm{Banach}^\mathrm{formalseriescolimitcomp}_R,\\
&\mathrm{sNoncomm}\mathrm{Simplicial}\mathrm{Ind}^m\mathrm{Banach}^\mathrm{formalseriescolimitcomp}_R.	
\end{align}	
\end{definition}
\

\indent We then follow \cite[Section 2.3]{BMS}, \cite[Chapter 3]{NS} to give the following definitions on the topological Hochschild complexes, topological period complexes and topological cyclic complexes
\begin{align}
 \mathrm{THH}_{-/P,\mathrm{BBM},\mathrm{analytification}},\\
 \mathrm{TP}_{-/P,\mathrm{BBM},\mathrm{analytification}},\\
 \mathrm{TC}_{-/P,\mathrm{BBM},\mathrm{analytification}}. 
\end{align}
 All the constructions are directly applications of functors in \cite[Section 2.3]{BMS}, \cite[Chapter 3]{NS}.

\begin{definition}
Following \cite[Section 2.3]{BMS} and \cite[Chapter 3]{NS} we give the following definition. For any ring
\begin{align}
\mathcal{R}=\underset{n}{\mathrm{homotopycolimit}}\mathcal{R}_n	
\end{align}
in the $\infty$-categories:
\begin{align}
&\mathrm{sNoncomm}\mathrm{Simplicial}\mathrm{Ind}\mathrm{Seminormed}^\mathrm{formalseriescolimitcomp}_R,\\
&\mathrm{sNoncomm}\mathrm{Simplicial}\mathrm{Ind}^m\mathrm{Seminormed}^\mathrm{formalseriescolimitcomp}_R,\\
&\mathrm{sNoncomm}\mathrm{Simplicial}\mathrm{Ind}\mathrm{Normed}^\mathrm{formalseriescolimitcomp}_R,\\
&\mathrm{sNoncomm}\mathrm{Simplicial}\mathrm{Ind}^m\mathrm{Normed}^\mathrm{formalseriescolimitcomp}_R,\\
&\mathrm{sNoncomm}\mathrm{Simplicial}\mathrm{Ind}\mathrm{Banach}^\mathrm{formalseriescolimitcomp}_R,\\
&\mathrm{sNoncomm}\mathrm{Simplicial}\mathrm{Ind}^m\mathrm{Banach}^\mathrm{formalseriescolimitcomp}_R,	
\end{align}	
we define the corresponding generalized topological Hochschild complexes, generalized topological period complexes and generalized topological cyclic complexes $\mathrm{THH}^{\sqrt{I}}_{-/P}$, $\mathrm{TP}^{\sqrt{I}}_{-/P}$, $\mathrm{TC}^{\sqrt{I}}_{-/P}$:
\begin{align}
 \mathrm{THH}^{\sqrt{I}}_{-/P,\mathrm{BBM},\mathrm{analytification}}(\mathcal{R}),\\
 \mathrm{TP}^{\sqrt{I}}_{-/P,\mathrm{BBM},\mathrm{analytification}}(\mathcal{R}),\\
 \mathrm{TC}^{\sqrt{I}}_{-/P,\mathrm{BBM},\mathrm{analytification}}(\mathcal{R}). 
\end{align}
as:
\begin{align}
& \mathrm{THH}^{\sqrt{I}}_{-/P,\mathrm{BBM},\mathrm{analytification}}(\mathcal{R})\\
&:=[(\underset{n}{\mathrm{homotopycolimit}}~  \mathrm{THH}_{-/P,\mathrm{BBM},\mathrm{analytification}}(\mathcal{R}_n))^\wedge_{p}[\sqrt{I}]]_{\mathrm{BBM},\mathrm{formalanalytification}}\\
& \mathrm{TP}^{\sqrt{I}}_{-/P,\mathrm{BBM},\mathrm{analytification}}(\mathcal{R})\\
&:=[(\underset{n}{\mathrm{homotopycolimit}}~  \mathrm{TP}_{-/P,\mathrm{BBM},\mathrm{analytification}}(\mathcal{R}_n))^\wedge_{p}[\sqrt{I}]]_{\mathrm{BBM},\mathrm{formalanalytification}}\\
& \mathrm{TC}^{\sqrt{I}}_{-/P,\mathrm{BBM},\mathrm{analytification}}(\mathcal{R})\\
&:=[(\underset{n}{\mathrm{homotopycolimit}}~  \mathrm{TC}_{-/P,\mathrm{BBM},\mathrm{analytification}}(\mathcal{R}_n))^\wedge_{p}[\sqrt{I}]]_{\mathrm{BBM},\mathrm{formalanalytification}}\\	
\end{align}
where the notation means we take the corresponding algebraic topological $p$-completion, then we take the corresponding formal series analytification from \cite[4.2]{BBM} in the corresponding analogy of the commutative situation.\\
\end{definition}

\indent Now we consider preperfectoidization constuctions:

\begin{definition}
Following \cite{BS}, we give the following definition. For any ring
\begin{align}
\mathcal{R}=\underset{n}{\mathrm{homotopycolimit}}\mathcal{R}_n	
\end{align}
in the $\infty$-categories:
\begin{align}
&\mathrm{sNoncomm}\mathrm{Simplicial}\mathrm{Ind}\mathrm{Seminormed}^\mathrm{formalseriescolimitcomp}_R,\\
&\mathrm{sNoncomm}\mathrm{Simplicial}\mathrm{Ind}^m\mathrm{Seminormed}^\mathrm{formalseriescolimitcomp}_R,\\
&\mathrm{sNoncomm}\mathrm{Simplicial}\mathrm{Ind}\mathrm{Normed}^\mathrm{formalseriescolimitcomp}_R,\\
&\mathrm{sNoncomm}\mathrm{Simplicial}\mathrm{Ind}^m\mathrm{Normed}^\mathrm{formalseriescolimitcomp}_R,\\
&\mathrm{sNoncomm}\mathrm{Simplicial}\mathrm{Ind}\mathrm{Banach}^\mathrm{formalseriescolimitcomp}_R,\\
&\mathrm{sNoncomm}\mathrm{Simplicial}\mathrm{Ind}^m\mathrm{Banach}^\mathrm{formalseriescolimitcomp}_R,	
\end{align}	
we define the corresponding generalized topological Hochschild complexes, generalized topological period complexes and generalized topological cyclic complexes $\mathrm{THH}^{\sqrt{I}}_{-/P}$, $\mathrm{TP}^{\sqrt{I}}_{-/P}$, $\mathrm{TC}^{\sqrt{I}}_{-/P}$:
\begin{align}
 \mathrm{THH}^{\sqrt{I}}_{-/P,\mathrm{BBM},\mathrm{analytification}}(\mathcal{R}),\\
 \mathrm{TP}^{\sqrt{I}}_{-/P,\mathrm{BBM},\mathrm{analytification}}(\mathcal{R}),\\
 \mathrm{TC}^{\sqrt{I}}_{-/P,\mathrm{BBM},\mathrm{analytification}}(\mathcal{R}). 
\end{align}
as:
\begin{align}
& \mathrm{THH}^{\sqrt{I}}_{-/P,\mathrm{BBM},\mathrm{analytification}}(\mathcal{R})\\
&:=[(\underset{n}{\mathrm{homotopycolimit}}~  \mathrm{THH}_{-/P,\mathrm{BBM},\mathrm{analytification}}(\mathcal{R}_n))^\wedge_{p}[{\sqrt{I}}]]_{\mathrm{BBM},\mathrm{formalanalytification}}\\
& \mathrm{TP}^{\sqrt{I}}_{-/P,\mathrm{BBM},\mathrm{analytification}}(\mathcal{R})\\
&:=[(\underset{n}{\mathrm{homotopycolimit}}~  \mathrm{TP}_{-/P,\mathrm{BBM},\mathrm{analytification}}(\mathcal{R}_n))^\wedge_{p}[{\sqrt{I}}]]_{\mathrm{BBM},\mathrm{formalanalytification}}\\
& \mathrm{TC}^{\sqrt{I}}_{-/P,\mathrm{BBM},\mathrm{analytification}}(\mathcal{R})\\
&:=[(\underset{n}{\mathrm{homotopycolimit}}~  \mathrm{TC}_{-/P,\mathrm{BBM},\mathrm{analytification}}(\mathcal{R}_n))^\wedge_{p}[{\sqrt{I}}]]_{\mathrm{BBM},\mathrm{formalanalytification}}\\	
\end{align}
where the notation means we take the corresponding algebraic topological $p$-completion, then we take the corresponding formal series analytification from \cite[4.2]{BBM} in the corresponding analogy of the commutative situation. Then as in \cite[Definition 8.2]{BS} we put the following generalized preperfectoidizations of any object $R$ to be:
\begin{align}
&R^\mathrm{preperfectoidization,THH,{\sqrt{I}}}:=\\
&\underset{i}{\mathrm{homotopycolimit}}(\mathrm{THH}^{\sqrt{I}}_{-/P,\mathrm{BBM},\mathrm{analytification}}(\mathcal{R})\rightarrow \mathrm{Fro}_*\mathrm{THH}^{\sqrt{I}}_{-/P,\mathrm{BBM},\mathrm{analytification}}(\mathcal{R})\\
&\rightarrow \mathrm{Fro}_*\mathrm{Fro}_*\mathrm{THH}^{\sqrt{I}}_{-/P,\mathrm{BBM},\mathrm{analytification}}(\mathcal{R})\rightarrow...)	\\
&R^\mathrm{preperfectoidization,TP,{\sqrt{I}}}:=\\
&\underset{i}{\mathrm{homotopycolimit}}(\mathrm{TP}^{\sqrt{I}}_{-/P,\mathrm{BBM},\mathrm{analytification}}(\mathcal{R})\rightarrow \mathrm{Fro}_*\mathrm{TP}^{\sqrt{I}}_{-/P,\mathrm{BBM},\mathrm{analytification}}(\mathcal{R})\\
&\rightarrow \mathrm{Fro}_*\mathrm{Fro}_*\mathrm{TP}^{\sqrt{I}}_{-/P,\mathrm{BBM},\mathrm{analytification}}(\mathcal{R})\rightarrow...)	\\
&R^\mathrm{preperfectoidization,TC,{\sqrt{I}}}:=\\
&\underset{i}{\mathrm{homotopycolimit}}(\mathrm{TC}^{\sqrt{I}}_{-/P,\mathrm{BBM},\mathrm{analytification}}(\mathcal{R})\rightarrow \mathrm{Fro}_*\mathrm{TC}^{\sqrt{I}}_{-/P,\mathrm{BBM},\mathrm{analytification}}(\mathcal{R})\\
&\rightarrow \mathrm{Fro}_*\mathrm{Fro}_*\mathrm{TC}^{\sqrt{I}}_{-/P,\mathrm{BBM},\mathrm{analytification}}(\mathcal{R})\rightarrow...)	\\
\end{align}
Then the generalized perfectoidizations of $R$ are just defined to be:
\begin{align}
R^\mathrm{preperfectoidization,\sharp}\times P/I.	
\end{align}
Here $\sharp$ represents one of $\mathrm{THH},\mathrm{TP},\mathrm{TC}$.
\end{definition}

\newpage
\section{Derived Generalized Prismatic Cohomology for Ringed Toposes}

We now follow \cite{Grot1}, \cite{Grot2}, \cite{Grot3}, \cite{Grot4}, \cite{BK}, \cite{BBK}, \cite{BBBK}, \cite{BBM}, \cite{KKM}, \cite{T2}, \cite{Sch2}, \cite{BS}, \cite{BL}, \cite{Dr1}\footnote{One can consider the corresponding absolute prismatic complexes \cite{BS}, \cite{BL2}, \cite{BL}, \cite{Dr1} as well, though our presentation fix a corresponding base prism $(P,I)$ where $P/I$ is assumed to be Banach giving rise to the $p$-adic topology. And we assume the boundedness.} to revisit and discuss the corresponding derived prismatic cohomology for rings in the following. We first consider the following generating ringed spaces from \cite{BK}:
\begin{align}
(\mathrm{Spa}^\mathrm{BK}P/I\left<X_1,...,X_n\right>,\mathcal{R}_{\mathrm{Spa}^\mathrm{BK}P/I\left<X_1,...,X_n\right>}),n=0,1,2,...
\end{align}
\begin{definition}
We now consider the homotopy limit completion of 
\begin{align}
(\mathrm{Spa}^\mathrm{BK}P/I\left<X_1,...,X_n\right>,\mathcal{R}_{\mathrm{Spa}^\mathrm{BK}P/I\left<X_1,...,X_n\right>}),n=0,1,2,...
\end{align}
in the following $\infty$-categories:
\begin{align}
&\mathrm{Sta}^\mathrm{derivedringed,\sharp}_{\mathrm{sComm}\mathrm{Simplicial}\mathrm{Ind}\mathrm{Seminormed}_R,\mathrm{homotopyepi}},\\
&\mathrm{Sta}^\mathrm{derivedringed,\sharp}_{\mathrm{sComm}\mathrm{Simplicial}\mathrm{Ind}^m\mathrm{Seminormed}_R,\mathrm{homotopyepi}},\\
&\mathrm{Sta}^\mathrm{derivedringed,\sharp}_{\mathrm{sComm}\mathrm{Simplicial}\mathrm{Ind}\mathrm{Normed}_R,\mathrm{homotopyepi}},\\
&\mathrm{Sta}^\mathrm{derivedringed,\sharp}_{\mathrm{sComm}\mathrm{Simplicial}\mathrm{Ind}^m\mathrm{Normed}_R,\mathrm{homotopyepi}},\\
&\mathrm{Sta}^\mathrm{derivedringed,\sharp}_{\mathrm{sComm}\mathrm{Simplicial}\mathrm{Ind}\mathrm{Banach}_R,\mathrm{homotopyepi}},\\
&\mathrm{Sta}^\mathrm{derivedringed,\sharp}_{\mathrm{sComm}\mathrm{Simplicial}\mathrm{Ind}^m\mathrm{Banach}_R,\mathrm{homotopyepi}}.	
\end{align}
Here $\sharp$ represents any category in the following:
\begin{align}
&\mathrm{sComm}\mathrm{Simplicial}\mathrm{Ind}\mathrm{Seminormed}_R,\\
&\mathrm{sComm}\mathrm{Simplicial}\mathrm{Ind}^m\mathrm{Seminormed}_R,\\
&\mathrm{sComm}\mathrm{Simplicial}\mathrm{Ind}\mathrm{Normed}_R,\\
&\mathrm{sComm}\mathrm{Simplicial}\mathrm{Ind}^m\mathrm{Normed}_R,\\
&\mathrm{sComm}\mathrm{Simplicial}\mathrm{Ind}\mathrm{Banach}_R,\\
&\mathrm{sComm}\mathrm{Simplicial}\mathrm{Ind}^m\mathrm{Banach}_R.	
\end{align}	
Here $R=P/I$. The resulting sub $\infty$-categories are denoted by:
\begin{align}
&\mathrm{Proj}^\mathrm{formalspectrum}\mathrm{Sta}^\mathrm{derivedringed,\sharp}_{\mathrm{sComm}\mathrm{Simplicial}\mathrm{Ind}\mathrm{Seminormed}_R,\mathrm{homotopyepi}},\\
&\mathrm{Proj}^\mathrm{formalspectrum}\mathrm{Sta}^\mathrm{derivedringed,\sharp}_{\mathrm{sComm}\mathrm{Simplicial}\mathrm{Ind}^m\mathrm{Seminormed}_R,\mathrm{homotopyepi}},\\
&\mathrm{Proj}^\mathrm{formalspectrum}\mathrm{Sta}^\mathrm{derivedringed,\sharp}_{\mathrm{sComm}\mathrm{Simplicial}\mathrm{Ind}\mathrm{Normed}_R,\mathrm{homotopyepi}},\\
&\mathrm{Proj}^\mathrm{formalspectrum}\mathrm{Sta}^\mathrm{derivedringed,\sharp}_{\mathrm{sComm}\mathrm{Simplicial}\mathrm{Ind}^m\mathrm{Normed}_R,\mathrm{homotopyepi}},\\
&\mathrm{Proj}^\mathrm{formalspectrum}\mathrm{Sta}^\mathrm{derivedringed,\sharp}_{\mathrm{sComm}\mathrm{Simplicial}\mathrm{Ind}\mathrm{Banach}_R,\mathrm{homotopyepi}},\\
&\mathrm{Proj}^\mathrm{formalspectrum}\mathrm{Sta}^\mathrm{derivedringed,\sharp}_{\mathrm{sComm}\mathrm{Simplicial}\mathrm{Ind}^m\mathrm{Banach}_R,\mathrm{homotopyepi}}.	
\end{align}
Here $\sharp$ represents any category in the following:
\begin{align}
&\mathrm{sComm}\mathrm{Simplicial}\mathrm{Ind}\mathrm{Seminormed}_R,\\
&\mathrm{sComm}\mathrm{Simplicial}\mathrm{Ind}^m\mathrm{Seminormed}_R,\\
&\mathrm{sComm}\mathrm{Simplicial}\mathrm{Ind}\mathrm{Normed}_R,\\
&\mathrm{sComm}\mathrm{Simplicial}\mathrm{Ind}^m\mathrm{Normed}_R,\\
&\mathrm{sComm}\mathrm{Simplicial}\mathrm{Ind}\mathrm{Banach}_R,\\
&\mathrm{sComm}\mathrm{Simplicial}\mathrm{Ind}^m\mathrm{Banach}_R.	
\end{align}	
This means that any space $(\mathbb{X},\mathcal{R})$ in the full $\infty$-categories could be written as the following:
\begin{align}
(\mathbb{X},\mathcal{R})=\underset{n}{\mathrm{homotopylimit}}(\mathbb{X}_n,\mathcal{R}_n)	
\end{align}
where we have then:
\begin{align}
\mathcal{R}=\underset{n}{\mathrm{homotopycolimit}}\mathcal{R}_n	
\end{align}
as coherent sheaves over each $\mathbb{X}_n$.

\end{definition}

\indent We then follow \cite{BS}, \cite{BL}, \cite{Dr1} to give the following definitions on the prismatic cohomology presheaf $\Delta_{-/P}$.

\begin{definition}
Following \cite[Construction 7.6]{BS} we give the following definition. For any space
\begin{align}
(\mathbb{X},\mathcal{R})=\underset{n}{\mathrm{homotopylimit}}(\mathbb{X}_n,\mathcal{R}_n)	
\end{align}
in the $\infty$-categories:
\begin{align}
&\mathrm{Proj}^\mathrm{formalspectrum}\mathrm{Sta}^\mathrm{derivedringed,\sharp}_{\mathrm{sComm}\mathrm{Simplicial}\mathrm{Ind}\mathrm{Seminormed}_R,\mathrm{homotopyepi}},\\
&\mathrm{Proj}^\mathrm{formalspectrum}\mathrm{Sta}^\mathrm{derivedringed,\sharp}_{\mathrm{sComm}\mathrm{Simplicial}\mathrm{Ind}^m\mathrm{Seminormed}_R,\mathrm{homotopyepi}},\\
&\mathrm{Proj}^\mathrm{formalspectrum}\mathrm{Sta}^\mathrm{derivedringed,\sharp}_{\mathrm{sComm}\mathrm{Simplicial}\mathrm{Ind}\mathrm{Normed}_R,\mathrm{homotopyepi}},\\
&\mathrm{Proj}^\mathrm{formalspectrum}\mathrm{Sta}^\mathrm{derivedringed,\sharp}_{\mathrm{sComm}\mathrm{Simplicial}\mathrm{Ind}^m\mathrm{Normed}_R,\mathrm{homotopyepi}},\\
&\mathrm{Proj}^\mathrm{formalspectrum}\mathrm{Sta}^\mathrm{derivedringed,\sharp}_{\mathrm{sComm}\mathrm{Simplicial}\mathrm{Ind}\mathrm{Banach}_R,\mathrm{homotopyepi}},\\
&\mathrm{Proj}^\mathrm{formalspectrum}\mathrm{Sta}^\mathrm{derivedringed,\sharp}_{\mathrm{sComm}\mathrm{Simplicial}\mathrm{Ind}^m\mathrm{Banach}_R,\mathrm{homotopyepi}},	
\end{align}
we define the corresponding generalized prismatic cohomology presheaf:
\begin{align}
\mathrm{Prism}^{\sqrt{I}}_{-/P,\mathrm{BBM},\mathrm{analytification}}(\mathcal{R})
\end{align}
as:
\begin{align}
&\mathrm{Prism}^{\sqrt{I}}_{-/P,\mathrm{BBM},\mathrm{analytification}}(\mathcal{R})\\
&:=\left[\left(\underset{n}{\mathrm{homotopycolimit}}~ (\mathrm{Prism}_{-/P,\mathrm{BBM},\mathrm{formalanalytification}}(\mathcal{R}_n)[\sqrt{I}])^\wedge_{p,\sqrt{I}}\right)^\wedge_{p,\sqrt{I}}\right]_{\mathrm{BBM},\mathrm{formalanalytification}}	
\end{align}
where the notation means we take the corresponding derived $(p,\sqrt{I})$-completion, then we take the corresponding formal series analytification from \cite[4.2]{BBM}.\\

\end{definition}

\newpage
\section{Almost Mixed-Parity Robba Stacks in the Commutative Algebra Situations}

\begin{reference}
\cite{KL1}, \cite{KL2}, \cite{Sch1}, \cite{Sch}, \cite{Fon}, \cite{FF}, \cite{F1}, \cite{Ta}.
\end{reference}

Now we consider the construction from \cite{KL1} and \cite{KL2}, and apply the functors in \cite[Definition 9.3.3, Definition 9.3.5, Definition 9.3.11, Definition 9.3.9]{KL1} and \cite{KL2} to the rings and spaces in our current $\infty$-categorical context. Now let $R$ be any analytic field $\mathcal{K}$ over $\mathbb{Q}_p$\footnote{After essential deformation and descend back we have the p-adic $2\pi\sqrt{-1}$-element in the p-adic world, which is denoted in our scenario $k$.}. Recall from \cite[Definition 9.3.3, Definition 9.3.5, Definition 9.3.11, Definition 9.3.9]{KL1} we have the following functors:
\begin{align}
\widetilde{\mathcal{C}}_{-/R}(.),{\mathbb{B}_e}_{-/R}(.),{\mathbb{B}_\mathrm{dR}^+}_{-/R}(.),{\mathbb{B}_\mathrm{dR}}_{-/R}(.),{FF}_{-/R}(.)	
\end{align}
on the following rings:
\begin{align}
R\left<X_1,...,X_n\right>,n=0,1,2,...	
\end{align}
We then have the situation to promote the functors of rings and stacks to the $\infty$-categorical context as above. Here recall that from \cite[Definition 9.3.3, Definition 9.3.5, Definition 9.3.11, Definition 9.3.9]{KL1} and \cite{KL2}:

\begin{definition}
For any $R\left<X_1,...,X_n\right>$, we have by taking the global section\footnote{Here the corresponding notation $\left<\log(k)\right>$ means the corresponding formal completion with respect to the variable $\log(k)$. ${\square\square\square}{{FF}}$ is then defined by using for instance the stacks in \cite{BK} and \cite{BBBK} when we consider the corresponding foundation \cite{BK} and \cite{BBBK}, or \cite{CS2} when we consider the corresponding foundation in \cite{CS2} by using the analytic rings therein, or \cite{CS3} when we consider the corresponding foundation in \cite{CS3} by using the light analytic rings therein.}:
\begin{align}
{\square\square\square}\widetilde{\mathcal{C}}_{-/R}(.)(R\left<X_1,...,X_n\right>):=\widetilde{\mathcal{C}}_{\mathrm{Spa}R\left<X_1,...,X_n\right>/R,\text{pro\'et}}(\mathrm{Spa}R\left<X_1,...,X_n\right>/R,\text{pro\'et})[k^{1/2}]\left<\log(k)\right>\\
{\square\square\square}{\mathbb{B}_e}_{-/R}(.)(R\left<X_1,...,X_n\right>):={\mathbb{B}_e}_{\mathrm{Spa}R\left<X_1,...,X_n\right>/R,\text{pro\'et}}(\mathrm{Spa}R\left<X_1,...,X_n\right>/R,\text{pro\'et})[k^{-1/2}]\left<\log(k)\right>,\\
{\square\square\square}{\mathbb{B}_\mathrm{dR}^+}_{-/R}(.)(R\left<X_1,...,X_n\right>):={\mathbb{B}_\mathrm{dR}^+}_{\mathrm{Spa}R\left<X_1,...,X_n\right>/R,\text{pro\'et}}(\mathrm{Spa}R\left<X_1,...,X_n\right>/R,\text{pro\'et})[k^{1/2}]\left<\log(k)\right>,\\
{\square\square\square}{\mathbb{B}_\mathrm{dR}}_{-/R}(.)(R\left<X_1,...,X_n\right>):={\mathbb{B}_\mathrm{dR}}_{\mathrm{Spa}R\left<X_1,...,X_n\right>/R,\text{pro\'et}}(\mathrm{Spa}R\left<X_1,...,X_n\right>/R,\text{pro\'et})[k^{1/2}]\left<\log(k)\right>,\\
{\square\square\square}{{FF}}_{-/R}(.)(R\left<X_1,...,X_n\right>):={\square\square\square}{FF}_{\mathrm{Spa}R\left<X_1,...,X_n\right>/R,\text{pro\'et}}(\mathrm{Spa}R\left<X_1,...,X_n\right>/R,\text{pro\'et}).
\end{align}
And from \cite[Definition 9.3.3, Definition 9.3.5, Definition 9.3.11, Definition 9.3.9]{KL1} and \cite{KL2} we have the notation of $\varphi$-modules, ${\square\square\square}B$-pairs and the vector bundles over the ${\square\square\square}FF$ curves as above. We now use the notation $M$ to denote them. We then put $V(.)(R\left<X_1,...,X_n\right>):=M(\mathrm{Spa}R\left<X_1,...,X_n\right>/R,\text{pro\'et})$. For the stack ${\square\square\square}FF$, this $V$ will be a corresponding vector bundle at the end over ${\square\square\square}FF_{(R\left<X_1,...,X_n\right>)^\flat/R}$. 	
\end{definition}

\begin{definition}
Following \cite[Definition 9.3.3, Definition 9.3.5, Definition 9.3.11, Definition 9.3.9]{KL1}, \cite{KL2} we give the following definition. For any ring
\begin{align}
\mathcal{R}=\underset{n}{\mathrm{homotopycolimit}}\mathcal{R}_n	
\end{align}
in the $\infty$-categories:
\begin{align}
&\mathrm{sComm}\mathrm{Simplicial}\mathrm{Ind}\mathrm{Seminormed}^\mathrm{formalseriescolimitcomp}_R,\\
&\mathrm{sComm}\mathrm{Simplicial}\mathrm{Ind}^m\mathrm{Seminormed}^\mathrm{formalseriescolimitcomp}_R,\\
&\mathrm{sComm}\mathrm{Simplicial}\mathrm{Ind}\mathrm{Normed}^\mathrm{formalseriescolimitcomp}_R,\\
&\mathrm{sComm}\mathrm{Simplicial}\mathrm{Ind}^m\mathrm{Normed}^\mathrm{formalseriescolimitcomp}_R,\\
&\mathrm{sComm}\mathrm{Simplicial}\mathrm{Ind}\mathrm{Banach}^\mathrm{formalseriescolimitcomp}_R,\\
&\mathrm{sComm}\mathrm{Simplicial}\mathrm{Ind}^m\mathrm{Banach}^\mathrm{formalseriescolimitcomp}_R.	
\end{align}	
we define the corresponding $\infty$-functors:
\begin{align}
{\square\square\square}\widetilde{\mathcal{C}}_{-/R}(.),{\square\square\square}{\mathbb{B}_e}_{-/R}(.),{\square\square\square}{\mathbb{B}_\mathrm{dR}^+}_{-/R}(.),{\square\square\square}{\mathbb{B}_\mathrm{dR}}_{-/R}(.),{\square\square\square}{FF}_{-/R}(.),{\square\square\square}V(.)	
\end{align}
as:
\begin{align}
&{\square\square\square}\widetilde{\mathcal{C}}_{-/R}(.)(\mathcal{R}):=\underset{n}{\mathrm{homotopycolimit}}~\widetilde{\mathcal{C}}_{-/R}(\mathcal{R}_n)[k^{1/2}]\left<\log(k)\right>,\\
&{\square\square\square}{\mathbb{B}_e}_{-/R}(.)(\mathcal{R}):=\underset{n}{\mathrm{homotopycolimit}}~{\mathbb{B}_e}_{-/R}(.)(\mathcal{R}_n)[k^{-1/2}]\left<\log(k)\right>,\\
&{\square\square\square}{\mathbb{B}_\mathrm{dR}^+}_{-/R}(.)(\mathcal{R}):=\underset{n}{\mathrm{homotopycolimit}}~{\mathbb{B}_\mathrm{dR}^+}_{-/R}(.)(\mathcal{R}_n)[k^{1/2}]\left<\log(k)\right>,\\
&{\square\square\square}{\mathbb{B}_\mathrm{dR}}_{-/R}(.)(\mathcal{R}):=\underset{n}{\mathrm{homotopycolimit}}~{\mathbb{B}_\mathrm{dR}}_{-/R}(.)(\mathcal{R}_n)[k^{1/2}]\left<\log(k)\right>,\\
&{\square\square\square}{{FF}}_{-/R}(.)(\mathcal{R}):=\underset{n}{\mathrm{homotopylimit}}~{\square\square\square}{{FF}}_{-/R}(.)(\mathcal{R}_n),\\	
&{\square\square\square}V(.)(\mathcal{R}):=\underset{n}{\mathrm{homotopycolimit}}~{\square\square\square}V(.)(\mathcal{R}_n).
\end{align}
Here the homotopy colimits are taken in the corresponding colimit completions of the categories where the rings and spaces are living. Here the homotopy limits are taken in the corresponding limit completions of the categories where the rings and spaces are living.
\end{definition}

\indent Now motivated also by \cite{M} after \cite{CS1}, \cite{CS2} and \cite{CS3} we consider the condensed mathematical version of the construction above. Namely we look at the corresponding Clausen-Scholze's animated enhancement of the corresponding analytic condensed solid commutative algebras:
\begin{align}
\mathrm{AnalyticRings}^\mathrm{CS}_R.	
\end{align}
And we consider the corresponding colimit closure of the formal series. We denote the corresponding $\infty$-category as:
\begin{align}
\mathrm{AnalyticRings}^\mathrm{CS,formalcolimitclosure}_R.	
\end{align}

\begin{definition}
Following \cite[Definition 9.3.3, Definition 9.3.5, Definition 9.3.11, Definition 9.3.9]{KL1}, \cite{KL2} we give the following definition. For any ring
\begin{align}
\mathcal{R}=\underset{n}{\mathrm{homotopycolimit}}\mathcal{R}_n	
\end{align}
in the $\infty$-category:
\begin{align}
\mathrm{AnalyticRings}^\mathrm{CS,formalcolimitclosure}_R,	
\end{align}	
we define the corresponding $\infty$-functors:
\begin{align}
{\square\square\square}\widetilde{\mathcal{C}}_{-/R}(.),{\square\square\square}{\mathbb{B}_e}_{-/R}(.),{\square\square\square}{\mathbb{B}_\mathrm{dR}^+}_{-/R}(.),{\square\square\square}{\mathbb{B}_\mathrm{dR}}_{-/R}(.),{\square\square\square}{FF}_{-/R}(.),{\square\square\square}V(.)	
\end{align}
as:
\begin{align}
&{\square\square\square}\widetilde{\mathcal{C}}_{-/R}(.)(\mathcal{R}):=\underset{n}{\mathrm{homotopycolimit}}~\widetilde{\mathcal{C}}_{-/R}(\mathcal{R}_n)[k^{1/2}]\left<\log(k)\right>,\\
&{\square\square\square}{\mathbb{B}_e}_{-/R}(.)(\mathcal{R}):=\underset{n}{\mathrm{homotopycolimit}}~{\mathbb{B}_e}_{-/R}(.)(\mathcal{R}_n)[k^{-1/2}]\left<\log(k)\right>,\\
&{\square\square\square}{\mathbb{B}_\mathrm{dR}^+}_{-/R}(.)(\mathcal{R}):=\underset{n}{\mathrm{homotopycolimit}}~{\mathbb{B}_\mathrm{dR}^+}_{-/R}(.)(\mathcal{R}_n)[k^{1/2}]\left<\log(k)\right>,\\
&{\square\square\square}{\mathbb{B}_\mathrm{dR}}_{-/R}(.)(\mathcal{R}):=\underset{n}{\mathrm{homotopycolimit}}~{\mathbb{B}_\mathrm{dR}}_{-/R}(.)(\mathcal{R}_n)[k^{1/2}]\left<\log(k)\right>,\\
&{\square\square\square}{{FF}}_{-/R}(.)(\mathcal{R}):=\underset{n}{\mathrm{homotopylimit}}~{\square\square\square}{{FF}}_{-/R}(.)(\mathcal{R}_n),\\	
&{\square\square\square}V(.)(\mathcal{R}):=\underset{n}{\mathrm{homotopycolimit}}~{\square\square\square}V(.)(\mathcal{R}_n).
\end{align}
Here the homotopy colimits are taken in the corresponding colimit completions of the categories where the rings and spaces are living. Here the homotopy limits are taken in the corresponding limit completions of the categories where the rings and spaces are living.
\end{definition}

Then the following is a direct conjectural consequence of  \cite[Theorem 9.3.12]{KL1}.

\begin{conjecture}
The corresponding $\infty$-categories of $\varphi$-module functors, ${\square\square\square}B$-pair functors and vector bundles functors over ${\square\square\square}FF$ functors are equivalent over:
\begin{align}
&\mathrm{sComm}\mathrm{Simplicial}\mathrm{Ind}\mathrm{Seminormed}^\mathrm{formalseriescolimitcomp}_R,\\
&\mathrm{sComm}\mathrm{Simplicial}\mathrm{Ind}^m\mathrm{Seminormed}^\mathrm{formalseriescolimitcomp}_R,\\
&\mathrm{sComm}\mathrm{Simplicial}\mathrm{Ind}\mathrm{Normed}^\mathrm{formalseriescolimitcomp}_R,\\
&\mathrm{sComm}\mathrm{Simplicial}\mathrm{Ind}^m\mathrm{Normed}^\mathrm{formalseriescolimitcomp}_R,\\
&\mathrm{sComm}\mathrm{Simplicial}\mathrm{Ind}\mathrm{Banach}^\mathrm{formalseriescolimitcomp}_R,\\
&\mathrm{sComm}\mathrm{Simplicial}\mathrm{Ind}^m\mathrm{Banach}^\mathrm{formalseriescolimitcomp}_R,	
\end{align}
or:
\begin{align}
\mathrm{AnalyticRings}^\mathrm{CS,formalcolimitclosure}_R.	
\end{align}	
\end{conjecture}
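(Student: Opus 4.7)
The plan is to reduce the statement to the previously established Proposition on the undecorated Robba stacks, together with a base-change argument along the faithfully flat morphism of period sheaves induced by adjoining $k^{\pm 1/2}$ and formally completing along $\log(k)$. First I would note that by construction all six functors in question are defined as homotopy colimits (or limits, for $\square\square\square FF$) of their values on the generating formal-series rings $R\langle X_1,\dots,X_n\rangle$, and that $\varphi$-modules, $\square\square\square B$-pairs, and vector bundles over the decorated Fargues-Fontaine stack pull back and glue along these diagrams. Hence it suffices to establish the equivalence at each object $R\langle X_1,\dots,X_n\rangle$ and check functoriality in $n$ and in the indexing diagram; the extension to objects of $\mathrm{AnalyticRings}^\mathrm{CS,formalcolimitclosure}_R$ then follows from the comparison between the Bambozzi-Ben-Bassat-Kremnizer and Clausen-Scholze foundations, exactly as in the undecorated case.

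Second, at each fixed generating ring I would argue as follows. Kedlaya-Liu's Theorem 9.3.12 gives the equivalence of $\varphi$-modules, $B$-pairs, and vector bundles over $FF_{(R\langle X_1,\dots,X_n\rangle)^\flat/R}$ in the undecorated setting. The decoration $[k^{1/2}]$ (respectively $[k^{-1/2}]$) corresponds to a finite étale cover of the relevant period sheaf, while the formal completion $\langle \log(k)\rangle$ corresponds to a pro-nilpotent thickening. Both operations are compatible with pullback along the morphisms of period sheaves used in Kedlaya-Liu's proof, and descent for vector bundles along a finite étale cover together with Nakayama-type lifting along the pro-nilpotent thickening recovers the equivalence in the decorated setting. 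One therefore obtains a commutative diagram of equivalences between the three decorated $\infty$-categories, compatibly for each $n$.

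The main obstacle, and the reason I phrase this as a conjecture rather than a theorem, is the following. In the ordinary Kedlaya-Liu framework the equivalence is proved via a careful analysis of the Beauville-Laszlo gluing along $FF$, and the compatibility of this gluing with $\varphi$ depends on $\varphi$ acting trivially on $k$. In the decorated setting, however, one must check that the $\varphi$-equivariant structure is preserved under the enhancement $[k^{\pm 1/2}]\langle \log(k)\rangle$; a priori $\varphi$ acts on $k^{1/2}$ only up to a sign, and on $\log(k)$ only up to an element of $\mathbb{B}_e$. Keeping track of these ambiguities in the $\infty$-categorical setting, particularly in the homotopy limit defining $\square\square\square FF$, requires a descent argument along a $\mathbb{Z}/2$-cover twisted by a unipotent factor. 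One would need to verify that the resulting descent datum is effective at the level of $\varphi$-modules, which is the key technical content that goes beyond the citation of \cite[Theorem 9.3.12]{KL1}.

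Finally, assuming this compatibility is established, the passage from the formal-series generators to the full $\infty$-categories is routine: the decorated Robba-type functors commute with the relevant homotopy colimits in the source (and $\square\square\square FF$ with homotopy limits), and the $\infty$-categories of $\varphi$-modules, $\square\square\square B$-pairs, and vector bundles over $\square\square\square FF$ satisfy the corresponding descent by construction, so the equivalence extends from generators to arbitrary objects by Kan extension along the colimit completion.
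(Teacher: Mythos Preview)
The paper does not prove this statement: it is explicitly labeled a \emph{Conjecture}, and the only justification given is the single sentence ``Then the following is a direct conjectural consequence of \cite[Theorem 9.3.12]{KL1}.'' There is no argument, no sketch, and no identification of obstacles in the paper itself.

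Your proposal therefore goes well beyond what the paper offers. You correctly anchor the statement in Kedlaya--Liu's Theorem 9.3.12, which matches the paper's one-line remark, but you then supply (i) the reduction to generating rings via the homotopy-colimit presentation, (ii) the observation that the decoration $[k^{\pm 1/2}]\langle\log(k)\rangle$ should be handled by finite \'etale base change plus pro-nilpotent thickening, and (iii) an explicit identification of the likely obstruction, namely the $\varphi$-equivariance on $k^{1/2}$ and $\log(k)$ and the resulting twisted descent problem. None of this analysis appears in the paper; the author simply records the statement as conjectural without saying why. Your write-up is thus not a proof comparison in the usual sense but rather a fleshed-out heuristic for a statement the paper leaves open, and as such it is consistent with---and considerably more informative than---the paper's treatment.
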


\newpage
\section{Almost Mixed-Parity Robba Stacks in the Ringed Topos Situations}

\begin{reference}
\cite{KL1}, \cite{KL2}, \cite{Sch1}, \cite{Sch}, \cite{Fon}, \cite{FF}, \cite{F1}, \cite{Ta}.
\end{reference}

Now we consider the construction from \cite[Definition 9.3.3, Definition 9.3.5, Definition 9.3.11, Definition 9.3.9]{KL1} and \cite{KL2}, and apply the functors in \cite[Definition 9.3.3, Definition 9.3.5, Definition 9.3.11, Definition 9.3.9]{KL1} and \cite{KL2} to the rings and spaces in our current $\infty$-categorical context. Now let $R$ be any analytic field $\mathcal{K}$ over $\mathbb{Q}_p$. Recall from \cite[Definition 9.3.3, Definition 9.3.5, Definition 9.3.11, Definition 9.3.9]{KL1} we have the following functors:
\begin{align}
{\square\square\square}\widetilde{\mathcal{C}}_{-/R}(.),{\square\square\square}{\mathbb{B}_e}_{-/R}(.),{\square\square\square}{\mathbb{B}_\mathrm{dR}^+}_{-/R}(.),{\square\square\square}{\mathbb{B}_\mathrm{dR}}_{-/R}(.),{\square\square\square}{FF}_{-/R}(.)	
\end{align}
on the following ringed spaces from \cite{BK}:
\begin{align}
(\mathrm{Spa}^\mathrm{BK}R\left<X_1,...,X_n\right>,\mathcal{O}_{\mathrm{Spa}^\mathrm{BK}R\left<X_1,...,X_n\right>}),n=0,1,2,...	
\end{align}
We then have the situation to promote the functors of rings and stacks to the $\infty$-categorical context as above. Here recall that from \cite[Definition 9.3.3, Definition 9.3.5, Definition 9.3.11, Definition 9.3.9]{KL1} and \cite{KL2}:

\begin{definition}
For any $\mathrm{Spa}^\mathrm{BK}R\left<X_1,...,X_n\right>$, we have by taking the global section:
\begin{align}
{\square\square\square}\widetilde{\mathcal{C}}_{-/R}(.)(\mathcal{O}_{\mathrm{Spa}^\mathrm{BK}R\left<X_1,...,X_n\right>})&(\mathrm{Spa}^\mathrm{BK}R\left<X_1,...,X_m\right>):=\\
&\widetilde{\mathcal{C}}_{\mathrm{Spa}R\left<X_1,...,X_m\right>/R,\text{pro\'et}}(\mathrm{Spa}R\left<X_1,...,X_m\right>/R,\text{pro\'et})[k^{1/2}]\left<\log(k)\right>\\
{\square\square\square}{\mathbb{B}_e}_{-/R}(.)(\mathcal{O}_{\mathrm{Spa}^\mathrm{BK}R\left<X_1,...,X_n\right>})&(\mathrm{Spa}^\mathrm{BK}R\left<X_1,...,X_m\right>):=\\
&{\mathbb{B}_e}_{\mathrm{Spa}R\left<X_1,...,X_m\right>/R,\text{pro\'et}}(\mathrm{Spa}R\left<X_1,...,X_m\right>/R,\text{pro\'et})[k^{-1/2}]\left<\log(k)\right>,\\
{\square\square\square}{\mathbb{B}_\mathrm{dR}^+}_{-/R}(.)(\mathcal{O}_{\mathrm{Spa}^\mathrm{BK}R\left<X_1,...,X_n\right>})&(\mathrm{Spa}^\mathrm{BK}R\left<X_1,...,X_m\right>):=\\
&{\mathbb{B}_\mathrm{dR}^+}_{\mathrm{Spa}R\left<X_1,...,X_m\right>/R,\text{pro\'et}}(\mathrm{Spa}R\left<X_1,...,X_m\right>/R,\text{pro\'et})[k^{1/2}]\left<\log(k)\right>,\\
{\square\square\square}{\mathbb{B}_\mathrm{dR}}_{-/R}(.)(\mathcal{O}_{\mathrm{Spa}^\mathrm{BK}R\left<X_1,...,X_n\right>})&(\mathrm{Spa}^\mathrm{BK}R\left<X_1,...,X_m\right>):=\\
&{\mathbb{B}_\mathrm{dR}}_{\mathrm{Spa}R\left<X_1,...,X_m\right>/R,\text{pro\'et}}(\mathrm{Spa}R\left<X_1,...,X_m\right>/R,\text{pro\'et})[k^{1/2}]\left<\log(k)\right>,\\
{\square\square\square}{{FF}}_{-/R}(.)(\mathcal{O}_{\mathrm{Spa}^\mathrm{BK}R\left<X_1,...,X_n\right>})&(\mathrm{Spa}^\mathrm{BK}R\left<X_1,...,X_m\right>):=\\
&{\square\square\square}{FF}_{\mathrm{Spa}R\left<X_1,...,X_m\right>/R,\text{pro\'et}}(\mathrm{Spa}R\left<X_1,...,X_m\right>/R,\text{pro\'et}).
\end{align}
And from \cite[Definition 9.3.3, Definition 9.3.5, Definition 9.3.11, Definition 9.3.9]{KL1} and \cite{KL2} we have the notation of $\varphi$-modules, ${\square\square\square}B$-pairs and the vector bundles over the ${\square\square\square}FF$ curves as above. We now use the notation $M$ to denote them. We then put $V(.)(\mathcal{O}_{\mathrm{Spa}^\mathrm{BK}R\left<X_1,...,X_n\right>})(\mathrm{Spa}^\mathrm{BK}R\left<X_1,...,X_m\right>):=M(\mathrm{Spa}R\left<X_1,...,X_m\right>/R,\text{pro\'et})$. For the stack ${\square\square\square}FF$, this $V$ will be a corresponding vector bundle at the end over ${\square\square\square}FF_{(R\left<X_1,...,X_n\right>)^\flat/R}$. 	
\end{definition}

\begin{definition}
Following \cite[Definition 9.3.3, Definition 9.3.5, Definition 9.3.11, Definition 9.3.9]{KL1}, \cite{KL2} we give the following definition. For any space
\begin{align}
(\mathbb{X},\mathcal{R})=\underset{n}{\mathrm{homotopylimit}}(\mathbb{X}_n,\mathcal{R}_n)	
\end{align}
in the $\infty$-categories:
\begin{align}
&\mathrm{Proj}^\mathrm{formalspectrum}\mathrm{Sta}^\mathrm{derivedringed,\sharp}_{\mathrm{sComm}\mathrm{Simplicial}\mathrm{Ind}\mathrm{Seminormed}_R,\mathrm{homotopyepi}},\\
&\mathrm{Proj}^\mathrm{formalspectrum}\mathrm{Sta}^\mathrm{derivedringed,\sharp}_{\mathrm{sComm}\mathrm{Simplicial}\mathrm{Ind}^m\mathrm{Seminormed}_R,\mathrm{homotopyepi}},\\
&\mathrm{Proj}^\mathrm{formalspectrum}\mathrm{Sta}^\mathrm{derivedringed,\sharp}_{\mathrm{sComm}\mathrm{Simplicial}\mathrm{Ind}\mathrm{Normed}_R,\mathrm{homotopyepi}},\\
&\mathrm{Proj}^\mathrm{formalspectrum}\mathrm{Sta}^\mathrm{derivedringed,\sharp}_{\mathrm{sComm}\mathrm{Simplicial}\mathrm{Ind}^m\mathrm{Normed}_R,\mathrm{homotopyepi}},\\
&\mathrm{Proj}^\mathrm{formalspectrum}\mathrm{Sta}^\mathrm{derivedringed,\sharp}_{\mathrm{sComm}\mathrm{Simplicial}\mathrm{Ind}\mathrm{Banach}_R,\mathrm{homotopyepi}},\\
&\mathrm{Proj}^\mathrm{formalspectrum}\mathrm{Sta}^\mathrm{derivedringed,\sharp}_{\mathrm{sComm}\mathrm{Simplicial}\mathrm{Ind}^m\mathrm{Banach}_R,\mathrm{homotopyepi}},	
\end{align}
we define the corresponding $\infty$-functors:
\begin{align}
{\square\square}\widetilde{\mathcal{C}}_{-/R}(.),{\square\square}{\mathbb{B}_e}_{-/R}(.),{\square\square}{\mathbb{B}_\mathrm{dR}^+}_{-/R}(.),{\square\square}{\mathbb{B}_\mathrm{dR}}_{-/R}(.),{\square\square}{FF}_{-/R}(.),V(.)	
\end{align}
as:
\begin{align}
&{\square\square}\widetilde{\mathcal{C}}_{-/R}(.)(\mathcal{R}):=\underset{n}{\mathrm{homotopycolimit}}~{\square\square}\widetilde{\mathcal{C}}_{-/R}(\mathcal{R}_n),\\
&{\square\square}{\mathbb{B}_e}_{-/R}(.)(\mathcal{R}):=\underset{n}{\mathrm{homotopycolimit}}~{\square\square}{\mathbb{B}_e}_{-/R}(.)(\mathcal{R}_n),\\
&{\square\square}{\mathbb{B}_\mathrm{dR}^+}_{-/R}(.)(\mathcal{R}):=\underset{n}{\mathrm{homotopycolimit}}~{\square\square}{\mathbb{B}_\mathrm{dR}^+}_{-/R}(.)(\mathcal{R}_n),\\
&{\square\square}{\mathbb{B}_\mathrm{dR}}_{-/R}(.)(\mathcal{R}):=\underset{n}{\mathrm{homotopycolimit}}~{\square\square}{\mathbb{B}_\mathrm{dR}}_{-/R}(.)(\mathcal{R}_n),\\
&{\square\square}{{FF}}_{-/R}(.)(\mathcal{R}):=\underset{n}{\mathrm{homotopylimit}}~{\square\square}{\square\square}{{FF}}_{-/R}(.)(\mathcal{R}_n),\\	
&{\square\square}V(.)(\mathcal{R}):=\underset{n}{\mathrm{homotopycolimit}}~{\square\square}V(.)(\mathcal{R}_n).
\end{align}
Here the homotopy colimits are taken in the corresponding colimit completions of the categories where the rings and spaces are living. Here the homotopy limits are taken in the corresponding limit completions of the categories where the rings and spaces are living.
\end{definition}

\newpage
\section{Almost Mixed-Parity Robba Stacks in the Inductive System Situations}

\begin{reference}
\cite{KL1}, \cite{KL2}, \cite{Sch1}, \cite{Sch}, \cite{Fon}, \cite{FF}, \cite{F1}, \cite{Ta}.
\end{reference}

Now we consider the construction from \cite[Definition 9.3.3, Definition 9.3.5, Definition 9.3.11, Definition 9.3.9]{KL1} and \cite{KL2}, and apply the functors in \cite[Definition 9.3.3, Definition 9.3.5, Definition 9.3.11, Definition 9.3.9]{KL1} and \cite{KL2} to the rings and spaces in our current $\infty$-categorical context. Now let $R$ be any analytic field $\mathcal{K}$ over $\mathbb{Q}_p$. Recall from \cite[Definition 9.3.3, Definition 9.3.5, Definition 9.3.11, Definition 9.3.9]{KL1} we have the following functors:
\begin{align}
{\square\square\square}\widetilde{\mathcal{C}}_{-/R}(.),{\square\square\square}{\mathbb{B}_e}_{-/R}(.),{\square\square\square}{\mathbb{B}_\mathrm{dR}^+}_{-/R}(.),{\square\square\square}{\mathbb{B}_\mathrm{dR}}_{-/R}(.),{\square\square\square}{FF}_{-/R}(.)	
\end{align}
on the following ringed spaces from \cite{BK}:
\begin{align}
(\mathrm{Spa}^\mathrm{BK}R\left<X_1,...,X_n\right>,\mathcal{O}_{\mathrm{Spa}^\mathrm{BK}R\left<X_1,...,X_n\right>}),n=0,1,2,...	
\end{align}
We then have the situation to promote the functors of rings and stacks to the $\infty$-categorical context as above. Here recall that from \cite[Definition 9.3.3, Definition 9.3.5, Definition 9.3.11, Definition 9.3.9]{KL1} and \cite{KL2}:

\begin{definition}
For any $\mathrm{Spa}^\mathrm{BK}R\left<X_1,...,X_n\right>$, we have by taking the global section:
\begin{align}
{\square\square\square}\widetilde{\mathcal{C}}_{-/R}(.)(\mathcal{O}_{\mathrm{Spa}^\mathrm{BK}R\left<X_1,...,X_n\right>})&(\mathrm{Spa}^\mathrm{BK}R\left<X_1,...,X_m\right>):=\\
&\widetilde{\mathcal{C}}_{\mathrm{Spa}R\left<X_1,...,X_m\right>/R,\text{pro\'et}}(\mathrm{Spa}R\left<X_1,...,X_m\right>/R,\text{pro\'et})[k^{1/2}]\left<\log(k)\right>\\
{\square\square\square}{\mathbb{B}_e}_{-/R}(.)(\mathcal{O}_{\mathrm{Spa}^\mathrm{BK}R\left<X_1,...,X_n\right>})&(\mathrm{Spa}^\mathrm{BK}R\left<X_1,...,X_m\right>):=\\
&{\mathbb{B}_e}_{\mathrm{Spa}R\left<X_1,...,X_m\right>/R,\text{pro\'et}}(\mathrm{Spa}R\left<X_1,...,X_m\right>/R,\text{pro\'et})[k^{-1/2}]\left<\log(k)\right>,\\
{\square\square\square}{\mathbb{B}_\mathrm{dR}^+}_{-/R}(.)(\mathcal{O}_{\mathrm{Spa}^\mathrm{BK}R\left<X_1,...,X_n\right>})&(\mathrm{Spa}^\mathrm{BK}R\left<X_1,...,X_m\right>):=\\
&{\mathbb{B}_\mathrm{dR}^+}_{\mathrm{Spa}R\left<X_1,...,X_m\right>/R,\text{pro\'et}}(\mathrm{Spa}R\left<X_1,...,X_m\right>/R,\text{pro\'et})[k^{1/2}]\left<\log(k)\right>,\\
{\square\square\square}{\mathbb{B}_\mathrm{dR}}_{-/R}(.)(\mathcal{O}_{\mathrm{Spa}^\mathrm{BK}R\left<X_1,...,X_n\right>})&(\mathrm{Spa}^\mathrm{BK}R\left<X_1,...,X_m\right>):=\\
&{\mathbb{B}_\mathrm{dR}}_{\mathrm{Spa}R\left<X_1,...,X_m\right>/R,\text{pro\'et}}(\mathrm{Spa}R\left<X_1,...,X_m\right>/R,\text{pro\'et})[k^{1/2}]\left<\log(k)\right>,\\
{\square\square\square}{{FF}}_{-/R}(.)(\mathcal{O}_{\mathrm{Spa}^\mathrm{BK}R\left<X_1,...,X_n\right>})&(\mathrm{Spa}^\mathrm{BK}R\left<X_1,...,X_m\right>):=\\
&{\square\square\square}{FF}_{\mathrm{Spa}R\left<X_1,...,X_m\right>/R,\text{pro\'et}}(\mathrm{Spa}R\left<X_1,...,X_m\right>/R,\text{pro\'et}).
\end{align}
And from \cite[Definition 9.3.3, Definition 9.3.5, Definition 9.3.11, Definition 9.3.9]{KL1} and \cite{KL2} we have the notation of $\varphi$-modules, ${\square\square\square}B$-pairs and the vector bundles over the ${\square\square\square}FF$ curves as above. We now use the notation $M$ to denote them. We then put $V(.)(\mathcal{O}_{\mathrm{Spa}^\mathrm{BK}R\left<X_1,...,X_n\right>})(\mathrm{Spa}^\mathrm{BK}R\left<X_1,...,X_m\right>):=M(\mathrm{Spa}R\left<X_1,...,X_m\right>/R,\text{pro\'et})$. For the stack ${\square\square\square}FF$, this $V$ will be a corresponding vector bundle at the end over ${\square\square\square}FF_{(R\left<X_1,...,X_n\right>)^\flat/R}$. 	
\end{definition}

\begin{definition}
Following \cite[Definition 9.3.3, Definition 9.3.5, Definition 9.3.11, Definition 9.3.9]{KL1}, \cite{KL2} we give the following definition. For any space
\begin{align}
(\mathbb{X},\mathcal{R})=\underset{n}{\mathrm{homotopycolimit}}(\mathbb{X}_n,\mathcal{R}_n)	
\end{align}
in the $\infty$-categories:
\begin{align}
&\mathrm{Ind}^\mathrm{formalspectrum}\mathrm{Sta}^\mathrm{derivedringed,\sharp}_{\mathrm{sComm}\mathrm{Simplicial}\mathrm{Ind}\mathrm{Seminormed}_R,\mathrm{homotopyepi}},\\
&\mathrm{Ind}^\mathrm{formalspectrum}\mathrm{Sta}^\mathrm{derivedringed,\sharp}_{\mathrm{sComm}\mathrm{Simplicial}\mathrm{Ind}^m\mathrm{Seminormed}_R,\mathrm{homotopyepi}},\\
&\mathrm{Ind}^\mathrm{formalspectrum}\mathrm{Sta}^\mathrm{derivedringed,\sharp}_{\mathrm{sComm}\mathrm{Simplicial}\mathrm{Ind}\mathrm{Normed}_R,\mathrm{homotopyepi}},\\
&\mathrm{Ind}^\mathrm{formalspectrum}\mathrm{Sta}^\mathrm{derivedringed,\sharp}_{\mathrm{sComm}\mathrm{Simplicial}\mathrm{Ind}^m\mathrm{Normed}_R,\mathrm{homotopyepi}},\\
&\mathrm{Ind}^\mathrm{formalspectrum}\mathrm{Sta}^\mathrm{derivedringed,\sharp}_{\mathrm{sComm}\mathrm{Simplicial}\mathrm{Ind}\mathrm{Banach}_R,\mathrm{homotopyepi}},\\
&\mathrm{Ind}^\mathrm{formalspectrum}\mathrm{Sta}^\mathrm{derivedringed,\sharp}_{\mathrm{sComm}\mathrm{Simplicial}\mathrm{Ind}^m\mathrm{Banach}_R,\mathrm{homotopyepi}},	
\end{align}

we define the corresponding $\infty$-functors:
\begin{align}
{\square\square\square}\widetilde{\mathcal{C}}_{-/R}(.),{\square\square\square}{\mathbb{B}_e}_{-/R}(.),{\square\square\square}{\mathbb{B}_\mathrm{dR}^+}_{-/R}(.),{\square\square\square}{\mathbb{B}_\mathrm{dR}}_{-/R}(.),{\square\square\square}{FF}_{-/R}(.),V(.)	
\end{align}
as:
\begin{align}
&{\square\square\square}\widetilde{\mathcal{C}}_{-/R}(.)(\mathcal{R}):=\underset{n}{\mathrm{homotopylimit}}~{\square\square\square}\widetilde{\mathcal{C}}_{-/R}(\mathcal{R}_n),\\
&{\square\square\square}{\mathbb{B}_e}_{-/R}(.)(\mathcal{R}):=\underset{n}{\mathrm{homotopylimit}}~{\square\square\square}{\mathbb{B}_e}_{-/R}(.)(\mathcal{R}_n),\\
&{\square\square\square}{\mathbb{B}_\mathrm{dR}^+}_{-/R}(.)(\mathcal{R}):=\underset{n}{\mathrm{homotopylimit}}~{\square\square\square}{\mathbb{B}_\mathrm{dR}^+}_{-/R}(.)(\mathcal{R}_n),\\
&{\square\square\square}{\mathbb{B}_\mathrm{dR}}_{-/R}(.)(\mathcal{R}):=\underset{n}{\mathrm{homotopylimit}}~{\square\square\square}{\mathbb{B}_\mathrm{dR}}_{-/R}(.)(\mathcal{R}_n),\\
&{\square\square\square}{{FF}}_{-/R}(.)(\mathcal{R}):=\underset{n}{\mathrm{homotopycolimit}}~{\square\square\square}{{FF}}_{-/R}(.)(\mathcal{R}_n),\\	
&{\square\square\square}V(.)(\mathcal{R}):=\underset{n}{\mathrm{homotopylimit}}~{\square\square\square}V(.)(\mathcal{R}_n).
\end{align}
\end{definition}

\

\begin{remark}
Here the homotopy colimits are taken in the corresponding colimit completions of the categories where the rings and spaces are living. Here the homotopy limits are taken in the corresponding limit completions of the categories where the rings and spaces are living. For instance, the Robba functor ${\square\square\square}\widetilde{\mathcal{C}}_{-/R}(.)$ takes value in ind-Fr\'echet rings $\mathrm{Ind}\text{Fr\'echet}_R$, we then consider the corresponding homotopy limit closure $\overline{\mathrm{Ind}\text{Fr\'echet}}^{\mathrm{homotopylimit}}_R$. For instance, the Fargues-Fontaine stack functors ${\square\square\square}{{FF}}_{-/R}(.)(-)$ take value in the preadic spaces $\mathrm{PreAdic}_R$, then we consider the corresponding homotopy colimit closure $\overline{\mathrm{PreAdic}_R}^{\mathrm{homotopycolimit}}$. 	
\end{remark}

\newpage
\section{Mixed-Parity Robba Stacks in the Commutative Algebra Situations}

\begin{reference}
\cite{KL1}, \cite{KL2}, \cite{Sch1}, \cite{Sch}, \cite{Fon}, \cite{FF}, \cite{F1}, \cite{Ta}.
\end{reference}

Now we consider the construction from \cite{KL1} and \cite{KL2}, and apply the functors in \cite[Definition 9.3.3, Definition 9.3.5, Definition 9.3.11, Definition 9.3.9]{KL1} and \cite{KL2} to the rings and spaces in our current $\infty$-categorical context. Now let $R$ be any analytic field $\mathcal{K}$ over $\mathbb{Q}_p$\footnote{After essential deformation and descend back we have the p-adic $2\pi\sqrt{-1}$-element in the p-adic world, which is denoted in our scenario $k$.}. Recall from \cite[Definition 9.3.3, Definition 9.3.5, Definition 9.3.11, Definition 9.3.9]{KL1} we have the following functors:
\begin{align}
\widetilde{\mathcal{C}}_{-/R}(.),{\mathbb{B}_e}_{-/R}(.),{\mathbb{B}_\mathrm{dR}^+}_{-/R}(.),{\mathbb{B}_\mathrm{dR}}_{-/R}(.),{FF}_{-/R}(.)	
\end{align}
on the following rings:
\begin{align}
R\left<X_1,...,X_n\right>,n=0,1,2,...	
\end{align}
We then have the situation to promote the functors of rings and stacks to the $\infty$-categorical context as above. Here recall that from \cite[Definition 9.3.3, Definition 9.3.5, Definition 9.3.11, Definition 9.3.9]{KL1} and \cite{KL2}:

\begin{definition}
For any $R\left<X_1,...,X_n\right>$, we have by taking the global section:
\begin{align}
{\square\square}\widetilde{\mathcal{C}}_{-/R}(.)(R\left<X_1,...,X_n\right>):=\widetilde{\mathcal{C}}_{\mathrm{Spa}R\left<X_1,...,X_n\right>/R,\text{pro\'et}}(\mathrm{Spa}R\left<X_1,...,X_n\right>/R,\text{pro\'et})[k^{1/2}]\\
{\square\square}{\mathbb{B}_e}_{-/R}(.)(R\left<X_1,...,X_n\right>):={\mathbb{B}_e}_{\mathrm{Spa}R\left<X_1,...,X_n\right>/R,\text{pro\'et}}(\mathrm{Spa}R\left<X_1,...,X_n\right>/R,\text{pro\'et})[k^{-1/2}],\\
{\square\square}{\mathbb{B}_\mathrm{dR}^+}_{-/R}(.)(R\left<X_1,...,X_n\right>):={\mathbb{B}_\mathrm{dR}^+}_{\mathrm{Spa}R\left<X_1,...,X_n\right>/R,\text{pro\'et}}(\mathrm{Spa}R\left<X_1,...,X_n\right>/R,\text{pro\'et})[k^{1/2}],\\
{\square\square}{\mathbb{B}_\mathrm{dR}}_{-/R}(.)(R\left<X_1,...,X_n\right>):={\mathbb{B}_\mathrm{dR}}_{\mathrm{Spa}R\left<X_1,...,X_n\right>/R,\text{pro\'et}}(\mathrm{Spa}R\left<X_1,...,X_n\right>/R,\text{pro\'et})[k^{1/2}],\\
{\square\square}{{FF}}_{-/R}(.)(R\left<X_1,...,X_n\right>):={\square\square}{FF}_{\mathrm{Spa}R\left<X_1,...,X_n\right>/R,\text{pro\'et}}(\mathrm{Spa}R\left<X_1,...,X_n\right>/R,\text{pro\'et}).
\end{align}
And from \cite[Definition 9.3.3, Definition 9.3.5, Definition 9.3.11, Definition 9.3.9]{KL1} and \cite{KL2} we have the notation of $\varphi$-modules, ${\square\square}B$-pairs and the vector bundles over the ${\square\square}FF$ curves as above. We now use the notation $M$ to denote them. We then put $V(.)(R\left<X_1,...,X_n\right>):=M(\mathrm{Spa}R\left<X_1,...,X_n\right>/R,\text{pro\'et})$. For the stack $FF$, this $V$ will be a corresponding vector bundle at the end over $FF_{(R\left<X_1,...,X_n\right>)^\flat/R}$. 	
\end{definition}

\begin{definition}
Following \cite[Definition 9.3.3, Definition 9.3.5, Definition 9.3.11, Definition 9.3.9]{KL1}, \cite{KL2} we give the following definition. For any ring
\begin{align}
\mathcal{R}=\underset{n}{\mathrm{homotopycolimit}}\mathcal{R}_n	
\end{align}
in the $\infty$-categories:
\begin{align}
&\mathrm{sComm}\mathrm{Simplicial}\mathrm{Ind}\mathrm{Seminormed}^\mathrm{formalseriescolimitcomp}_R,\\
&\mathrm{sComm}\mathrm{Simplicial}\mathrm{Ind}^m\mathrm{Seminormed}^\mathrm{formalseriescolimitcomp}_R,\\
&\mathrm{sComm}\mathrm{Simplicial}\mathrm{Ind}\mathrm{Normed}^\mathrm{formalseriescolimitcomp}_R,\\
&\mathrm{sComm}\mathrm{Simplicial}\mathrm{Ind}^m\mathrm{Normed}^\mathrm{formalseriescolimitcomp}_R,\\
&\mathrm{sComm}\mathrm{Simplicial}\mathrm{Ind}\mathrm{Banach}^\mathrm{formalseriescolimitcomp}_R,\\
&\mathrm{sComm}\mathrm{Simplicial}\mathrm{Ind}^m\mathrm{Banach}^\mathrm{formalseriescolimitcomp}_R.	
\end{align}	
we define the corresponding $\infty$-functors:
\begin{align}
{\square\square}\widetilde{\mathcal{C}}_{-/R}(.),{\square\square}{\mathbb{B}_e}_{-/R}(.),{\square\square}{\mathbb{B}_\mathrm{dR}^+}_{-/R}(.),{\square\square}{\mathbb{B}_\mathrm{dR}}_{-/R}(.),{\square\square}{FF}_{-/R}(.),{\square\square}V(.)	
\end{align}
as:
\begin{align}
&{\square\square}\widetilde{\mathcal{C}}_{-/R}(.)(\mathcal{R}):=\underset{n}{\mathrm{homotopycolimit}}~\widetilde{\mathcal{C}}_{-/R}(\mathcal{R}_n)[k^{1/2}],\\
&{\square\square}{\mathbb{B}_e}_{-/R}(.)(\mathcal{R}):=\underset{n}{\mathrm{homotopycolimit}}~{\mathbb{B}_e}_{-/R}(.)(\mathcal{R}_n)[k^{-1/2}],\\
&{\square\square}{\mathbb{B}_\mathrm{dR}^+}_{-/R}(.)(\mathcal{R}):=\underset{n}{\mathrm{homotopycolimit}}~{\mathbb{B}_\mathrm{dR}^+}_{-/R}(.)(\mathcal{R}_n)[k^{1/2}],\\
&{\square\square}{\mathbb{B}_\mathrm{dR}}_{-/R}(.)(\mathcal{R}):=\underset{n}{\mathrm{homotopycolimit}}~{\mathbb{B}_\mathrm{dR}}_{-/R}(.)(\mathcal{R}_n)[k^{1/2}],\\
&{\square\square}{{FF}}_{-/R}(.)(\mathcal{R}):=\underset{n}{\mathrm{homotopylimit}}~{\square\square}{{FF}}_{-/R}(.)(\mathcal{R}_n),\\	
&{\square\square}V(.)(\mathcal{R}):=\underset{n}{\mathrm{homotopycolimit}}~{\square\square}V(.)(\mathcal{R}_n).
\end{align}
Here the homotopy colimits are taken in the corresponding colimit completions of the categories where the rings and spaces are living. Here the homotopy limits are taken in the corresponding limit completions of the categories where the rings and spaces are living.
\end{definition}

\indent Now motivated also by \cite{M} after \cite{CS1}, \cite{CS2} and \cite{CS3} we consider the condensed mathematical version of the construction above. Namely we look at the corresponding Clausen-Scholze's animated enhancement of the corresponding analytic condensed solid commutative algebras:
\begin{align}
\mathrm{AnalyticRings}^\mathrm{CS}_R.	
\end{align}
And we consider the corresponding colimit closure of the formal series. We denote the corresponding $\infty$-category as:
\begin{align}
\mathrm{AnalyticRings}^\mathrm{CS,formalcolimitclosure}_R.	
\end{align}

\begin{definition}
Following \cite[Definition 9.3.3, Definition 9.3.5, Definition 9.3.11, Definition 9.3.9]{KL1}, \cite{KL2} we give the following definition. For any ring
\begin{align}
\mathcal{R}=\underset{n}{\mathrm{homotopycolimit}}\mathcal{R}_n	
\end{align}
in the $\infty$-category:
\begin{align}
\mathrm{AnalyticRings}^\mathrm{CS,formalcolimitclosure}_R,	
\end{align}	
we define the corresponding $\infty$-functors:
\begin{align}
{\square\square}\widetilde{\mathcal{C}}_{-/R}(.),{\square\square}{\mathbb{B}_e}_{-/R}(.),{\square\square}{\mathbb{B}_\mathrm{dR}^+}_{-/R}(.),{\square\square}{\mathbb{B}_\mathrm{dR}}_{-/R}(.),{\square\square}{FF}_{-/R}(.),{\square\square}V(.)	
\end{align}
as:
\begin{align}
&{\square\square}\widetilde{\mathcal{C}}_{-/R}(.)(\mathcal{R}):=\underset{n}{\mathrm{homotopycolimit}}~\widetilde{\mathcal{C}}_{-/R}(\mathcal{R}_n)[k^{1/2}],\\
&{\square\square}{\mathbb{B}_e}_{-/R}(.)(\mathcal{R}):=\underset{n}{\mathrm{homotopycolimit}}~{\mathbb{B}_e}_{-/R}(.)(\mathcal{R}_n)[k^{-1/2}],\\
&{\square\square}{\mathbb{B}_\mathrm{dR}^+}_{-/R}(.)(\mathcal{R}):=\underset{n}{\mathrm{homotopycolimit}}~{\mathbb{B}_\mathrm{dR}^+}_{-/R}(.)(\mathcal{R}_n)[k^{1/2}],\\
&{\square\square}{\mathbb{B}_\mathrm{dR}}_{-/R}(.)(\mathcal{R}):=\underset{n}{\mathrm{homotopycolimit}}~{\mathbb{B}_\mathrm{dR}}_{-/R}(.)(\mathcal{R}_n)[k^{1/2}],\\
&{\square\square}{{FF}}_{-/R}(.)(\mathcal{R}):=\underset{n}{\mathrm{homotopylimit}}~{\square\square}{{FF}}_{-/R}(.)(\mathcal{R}_n),\\	
&{\square\square}V(.)(\mathcal{R}):=\underset{n}{\mathrm{homotopycolimit}}~{\square\square}V(.)(\mathcal{R}_n).
\end{align}
Here the homotopy colimits are taken in the corresponding colimit completions of the categories where the rings and spaces are living. Here the homotopy limits are taken in the corresponding limit completions of the categories where the rings and spaces are living.
\end{definition}

Then the following is a direct consequence of  \cite[Theorem 9.3.12]{KL1}.

\begin{proposition}
The corresponding $\infty$-categories of $\varphi$-module functors, ${\square\square}B$-pair functors and vector bundles functors over ${\square\square}FF$ functors are equivalent over:
\begin{align}
&\mathrm{sComm}\mathrm{Simplicial}\mathrm{Ind}\mathrm{Seminormed}^\mathrm{formalseriescolimitcomp}_R,\\
&\mathrm{sComm}\mathrm{Simplicial}\mathrm{Ind}^m\mathrm{Seminormed}^\mathrm{formalseriescolimitcomp}_R,\\
&\mathrm{sComm}\mathrm{Simplicial}\mathrm{Ind}\mathrm{Normed}^\mathrm{formalseriescolimitcomp}_R,\\
&\mathrm{sComm}\mathrm{Simplicial}\mathrm{Ind}^m\mathrm{Normed}^\mathrm{formalseriescolimitcomp}_R,\\
&\mathrm{sComm}\mathrm{Simplicial}\mathrm{Ind}\mathrm{Banach}^\mathrm{formalseriescolimitcomp}_R,\\
&\mathrm{sComm}\mathrm{Simplicial}\mathrm{Ind}^m\mathrm{Banach}^\mathrm{formalseriescolimitcomp}_R,	
\end{align}
or:
\begin{align}
\mathrm{AnalyticRings}^\mathrm{CS,formalcolimitclosure}_R.	
\end{align}	
\end{proposition}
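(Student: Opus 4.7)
The plan is to reduce the asserted equivalence to the classical Kedlaya--Liu theorem \cite[Theorem 9.3.12]{KL1} applied at each generating affinoid $R\left<X_1,\ldots,X_n\right>$, and then to propagate the resulting equivalence through the homotopy (co)limit procedures that define the functors ${\square\square}\widetilde{\mathcal{C}}_{-/R}(.)$, ${\square\square}{\mathbb{B}_e}_{-/R}(.)$, ${\square\square}{\mathbb{B}_\mathrm{dR}^+}_{-/R}(.)$, ${\square\square}{\mathbb{B}_\mathrm{dR}}_{-/R}(.)$, ${\square\square}{FF}_{-/R}(.)$ on the $\infty$-categorical side.

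First I would observe that by the very construction of the categories, every object $\mathcal{R}$ in each of the six listed $\infty$-categories and in $\mathrm{AnalyticRings}^\mathrm{CS,formalcolimitclosure}_R$ admits a presentation $\mathcal{R}=\underset{n}{\mathrm{homotopycolimit}}~\mathcal{R}_n$ with each $\mathcal{R}_n$ a generator of the form $R\left<X_1,\ldots,X_m\right>$. On such a generator the Kedlaya--Liu equivalence \cite[Theorem 9.3.12]{KL1} yields a natural equivalence between $\varphi$-modules, $B$-pairs, and vector bundles over $FF_{-/R}(\mathcal{R}_n)$ in the pro-\'etale site. To pass to the ${\square\square}$-variants I would observe that adjoining $k^{1/2}$ to each of the relevant Robba rings, $\mathbb{B}_e$, $\mathbb{B}_\mathrm{dR}^+$, $\mathbb{B}_\mathrm{dR}$ is a finite faithfully flat, Frobenius-equivariant extension, and standard pro-\'etale descent then matches $\varphi$-modules, ${\square\square}B$-pairs, and vector bundles on ${\square\square}FF_{-/R}(\mathcal{R}_n)$ in parallel, giving the generator-level equivalence.

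Next I would promote this generator-level equivalence to arbitrary $\mathcal{R}$ by invoking the defining formulas
\begin{align*}
{\square\square}\widetilde{\mathcal{C}}_{-/R}(.)(\mathcal{R})=\underset{n}{\mathrm{homotopycolimit}}~{\square\square}\widetilde{\mathcal{C}}_{-/R}(.)(\mathcal{R}_n)
\end{align*}
for the period ring functors, and the dual
\begin{align*}
{\square\square}FF_{-/R}(.)(\mathcal{R})=\underset{n}{\mathrm{homotopylimit}}~{\square\square}FF_{-/R}(.)(\mathcal{R}_n)
\end{align*}
for the Fargues--Fontaine stack, together with the general principle that the $\infty$-categories of $\varphi$-modules, ${\square\square}B$-pairs and vector bundles turn homotopy colimits of rings into homotopy limits of module categories. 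Since each equivalence at level $n$ is natural in $\mathcal{R}_n$, the indexed system of equivalences assembles to a single equivalence of $\infty$-categories at $\mathcal{R}$.

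The hard part will be checking this last functoriality in the presence of both the ${\square\square}$-twist and the ind-Fr\'echet/pre-adic homotopy (co)limit completions in which the functors land, in particular verifying that the comparison maps built from \cite[Theorem 9.3.12]{KL1} commute strictly with the formal-series colimit closure used to define the $\infty$-categories above. The condensed variant over $\mathrm{AnalyticRings}^\mathrm{CS,formalcolimitclosure}_R$ is then handled by the parallel formalism of \cite{CS1}, \cite{CS2}, \cite{CS3}, \cite{M}, which supplies (through the animation and the solid tensor product) exactly the functoriality needed to run the same homotopy colimit argument.
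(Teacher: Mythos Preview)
Your proposal is correct and takes essentially the same approach as the paper: the paper's entire proof is the single sentence ``this is a direct consequence of \cite[Theorem 9.3.12]{KL1},'' and your write-up is a faithful unpacking of what that citation means in the present $\infty$-categorical setup (reduction to the generators $R\langle X_1,\ldots,X_n\rangle$, invocation of Kedlaya--Liu there, compatibility with the finite flat $k^{1/2}$-extension, and propagation through the defining homotopy (co)limits). The only difference is level of detail, not strategy.
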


\newpage
\section{Mixed-Parity Robba Stacks in the Ringed Topos Situations}

\begin{reference}
\cite{KL1}, \cite{KL2}, \cite{Sch1}, \cite{Sch}, \cite{Fon}, \cite{FF}, \cite{F1}, \cite{Ta}.
\end{reference}

Now we consider the construction from \cite[Definition 9.3.3, Definition 9.3.5, Definition 9.3.11, Definition 9.3.9]{KL1} and \cite{KL2}, and apply the functors in \cite[Definition 9.3.3, Definition 9.3.5, Definition 9.3.11, Definition 9.3.9]{KL1} and \cite{KL2} to the rings and spaces in our current $\infty$-categorical context. Now let $R$ be any analytic field $\mathcal{K}$ over $\mathbb{Q}_p$. Recall from \cite[Definition 9.3.3, Definition 9.3.5, Definition 9.3.11, Definition 9.3.9]{KL1} we have the following functors:
\begin{align}
{\square\square}\widetilde{\mathcal{C}}_{-/R}(.),{\square\square}{\mathbb{B}_e}_{-/R}(.),{\square\square}{\mathbb{B}_\mathrm{dR}^+}_{-/R}(.),{\square\square}{\mathbb{B}_\mathrm{dR}}_{-/R}(.),{\square\square}{FF}_{-/R}(.)	
\end{align}
on the following ringed spaces from \cite{BK}:
\begin{align}
(\mathrm{Spa}^\mathrm{BK}R\left<X_1,...,X_n\right>,\mathcal{O}_{\mathrm{Spa}^\mathrm{BK}R\left<X_1,...,X_n\right>}),n=0,1,2,...	
\end{align}
We then have the situation to promote the functors of rings and stacks to the $\infty$-categorical context as above. Here recall that from \cite[Definition 9.3.3, Definition 9.3.5, Definition 9.3.11, Definition 9.3.9]{KL1} and \cite{KL2}:

\begin{definition}
For any $\mathrm{Spa}^\mathrm{BK}R\left<X_1,...,X_n\right>$, we have by taking the global section:
\begin{align}
{\square\square}\widetilde{\mathcal{C}}_{-/R}(.)(\mathcal{O}_{\mathrm{Spa}^\mathrm{BK}R\left<X_1,...,X_n\right>})&(\mathrm{Spa}^\mathrm{BK}R\left<X_1,...,X_m\right>):=\\
&\widetilde{\mathcal{C}}_{\mathrm{Spa}R\left<X_1,...,X_m\right>/R,\text{pro\'et}}(\mathrm{Spa}R\left<X_1,...,X_m\right>/R,\text{pro\'et})[k^{1/2}]\\
{\square\square}{\mathbb{B}_e}_{-/R}(.)(\mathcal{O}_{\mathrm{Spa}^\mathrm{BK}R\left<X_1,...,X_n\right>})&(\mathrm{Spa}^\mathrm{BK}R\left<X_1,...,X_m\right>):=\\
&{\mathbb{B}_e}_{\mathrm{Spa}R\left<X_1,...,X_m\right>/R,\text{pro\'et}}(\mathrm{Spa}R\left<X_1,...,X_m\right>/R,\text{pro\'et})[k^{-1/2}],\\
{\square\square}{\mathbb{B}_\mathrm{dR}^+}_{-/R}(.)(\mathcal{O}_{\mathrm{Spa}^\mathrm{BK}R\left<X_1,...,X_n\right>})&(\mathrm{Spa}^\mathrm{BK}R\left<X_1,...,X_m\right>):=\\
&{\mathbb{B}_\mathrm{dR}^+}_{\mathrm{Spa}R\left<X_1,...,X_m\right>/R,\text{pro\'et}}(\mathrm{Spa}R\left<X_1,...,X_m\right>/R,\text{pro\'et})[k^{1/2}],\\
{\square\square}{\mathbb{B}_\mathrm{dR}}_{-/R}(.)(\mathcal{O}_{\mathrm{Spa}^\mathrm{BK}R\left<X_1,...,X_n\right>})&(\mathrm{Spa}^\mathrm{BK}R\left<X_1,...,X_m\right>):=\\
&{\mathbb{B}_\mathrm{dR}}_{\mathrm{Spa}R\left<X_1,...,X_m\right>/R,\text{pro\'et}}(\mathrm{Spa}R\left<X_1,...,X_m\right>/R,\text{pro\'et})[k^{1/2}],\\
{\square\square}{{FF}}_{-/R}(.)(\mathcal{O}_{\mathrm{Spa}^\mathrm{BK}R\left<X_1,...,X_n\right>})&(\mathrm{Spa}^\mathrm{BK}R\left<X_1,...,X_m\right>):=\\
&{\square\square}{FF}_{\mathrm{Spa}R\left<X_1,...,X_m\right>/R,\text{pro\'et}}(\mathrm{Spa}R\left<X_1,...,X_m\right>/R,\text{pro\'et}).
\end{align}
And from \cite[Definition 9.3.3, Definition 9.3.5, Definition 9.3.11, Definition 9.3.9]{KL1} and \cite{KL2} we have the notation of $\varphi$-modules, ${\square\square}B$-pairs and the vector bundles over the ${\square\square}FF$ curves as above. We now use the notation $M$ to denote them. We then put $V(.)(\mathcal{O}_{\mathrm{Spa}^\mathrm{BK}R\left<X_1,...,X_n\right>})(\mathrm{Spa}^\mathrm{BK}R\left<X_1,...,X_m\right>):=M(\mathrm{Spa}R\left<X_1,...,X_m\right>/R,\text{pro\'et})$. For the stack ${\square\square}FF$, this $V$ will be a corresponding vector bundle at the end over ${\square\square}FF_{(R\left<X_1,...,X_n\right>)^\flat/R}$. 	
\end{definition}

\begin{definition}
Following \cite[Definition 9.3.3, Definition 9.3.5, Definition 9.3.11, Definition 9.3.9]{KL1}, \cite{KL2} we give the following definition. For any space
\begin{align}
(\mathbb{X},\mathcal{R})=\underset{n}{\mathrm{homotopylimit}}(\mathbb{X}_n,\mathcal{R}_n)	
\end{align}
in the $\infty$-categories:
\begin{align}
&\mathrm{Proj}^\mathrm{formalspectrum}\mathrm{Sta}^\mathrm{derivedringed,\sharp}_{\mathrm{sComm}\mathrm{Simplicial}\mathrm{Ind}\mathrm{Seminormed}_R,\mathrm{homotopyepi}},\\
&\mathrm{Proj}^\mathrm{formalspectrum}\mathrm{Sta}^\mathrm{derivedringed,\sharp}_{\mathrm{sComm}\mathrm{Simplicial}\mathrm{Ind}^m\mathrm{Seminormed}_R,\mathrm{homotopyepi}},\\
&\mathrm{Proj}^\mathrm{formalspectrum}\mathrm{Sta}^\mathrm{derivedringed,\sharp}_{\mathrm{sComm}\mathrm{Simplicial}\mathrm{Ind}\mathrm{Normed}_R,\mathrm{homotopyepi}},\\
&\mathrm{Proj}^\mathrm{formalspectrum}\mathrm{Sta}^\mathrm{derivedringed,\sharp}_{\mathrm{sComm}\mathrm{Simplicial}\mathrm{Ind}^m\mathrm{Normed}_R,\mathrm{homotopyepi}},\\
&\mathrm{Proj}^\mathrm{formalspectrum}\mathrm{Sta}^\mathrm{derivedringed,\sharp}_{\mathrm{sComm}\mathrm{Simplicial}\mathrm{Ind}\mathrm{Banach}_R,\mathrm{homotopyepi}},\\
&\mathrm{Proj}^\mathrm{formalspectrum}\mathrm{Sta}^\mathrm{derivedringed,\sharp}_{\mathrm{sComm}\mathrm{Simplicial}\mathrm{Ind}^m\mathrm{Banach}_R,\mathrm{homotopyepi}},	
\end{align}
we define the corresponding $\infty$-functors:
\begin{align}
{\square\square}\widetilde{\mathcal{C}}_{-/R}(.),{\square\square}{\mathbb{B}_e}_{-/R}(.),{\square\square}{\mathbb{B}_\mathrm{dR}^+}_{-/R}(.),{\square\square}{\mathbb{B}_\mathrm{dR}}_{-/R}(.),{\square\square}{FF}_{-/R}(.),V(.)	
\end{align}
as:
\begin{align}
&{\square\square}\widetilde{\mathcal{C}}_{-/R}(.)(\mathcal{R}):=\underset{n}{\mathrm{homotopycolimit}}~{\square\square}\widetilde{\mathcal{C}}_{-/R}(\mathcal{R}_n),\\
&{\square\square}{\mathbb{B}_e}_{-/R}(.)(\mathcal{R}):=\underset{n}{\mathrm{homotopycolimit}}~{\square\square}{\mathbb{B}_e}_{-/R}(.)(\mathcal{R}_n),\\
&{\square\square}{\mathbb{B}_\mathrm{dR}^+}_{-/R}(.)(\mathcal{R}):=\underset{n}{\mathrm{homotopycolimit}}~{\square\square}{\mathbb{B}_\mathrm{dR}^+}_{-/R}(.)(\mathcal{R}_n),\\
&{\square\square}{\mathbb{B}_\mathrm{dR}}_{-/R}(.)(\mathcal{R}):=\underset{n}{\mathrm{homotopycolimit}}~{\square\square}{\mathbb{B}_\mathrm{dR}}_{-/R}(.)(\mathcal{R}_n),\\
&{\square\square}{{FF}}_{-/R}(.)(\mathcal{R}):=\underset{n}{\mathrm{homotopylimit}}~{\square\square}{\square\square}{{FF}}_{-/R}(.)(\mathcal{R}_n),\\	
&{\square\square}V(.)(\mathcal{R}):=\underset{n}{\mathrm{homotopycolimit}}~{\square\square}V(.)(\mathcal{R}_n).
\end{align}
Here the homotopy colimits are taken in the corresponding colimit completions of the categories where the rings and spaces are living. Here the homotopy limits are taken in the corresponding limit completions of the categories where the rings and spaces are living.
\end{definition}

\newpage
\section{Mixed-Parity Robba Stacks in the Inductive System Situations}

\begin{reference}
\cite{KL1}, \cite{KL2}, \cite{Sch1}, \cite{Sch}, \cite{Fon}, \cite{FF}, \cite{F1}, \cite{Ta}.
\end{reference}

Now we consider the construction from \cite[Definition 9.3.3, Definition 9.3.5, Definition 9.3.11, Definition 9.3.9]{KL1} and \cite{KL2}, and apply the functors in \cite[Definition 9.3.3, Definition 9.3.5, Definition 9.3.11, Definition 9.3.9]{KL1} and \cite{KL2} to the rings and spaces in our current $\infty$-categorical context. Now let $R$ be any analytic field $\mathcal{K}$ over $\mathbb{Q}_p$. Recall from \cite[Definition 9.3.3, Definition 9.3.5, Definition 9.3.11, Definition 9.3.9]{KL1} we have the following functors:
\begin{align}
{\square\square}\widetilde{\mathcal{C}}_{-/R}(.),{\square\square}{\mathbb{B}_e}_{-/R}(.),{\square\square}{\mathbb{B}_\mathrm{dR}^+}_{-/R}(.),{\square\square}{\mathbb{B}_\mathrm{dR}}_{-/R}(.),{\square\square}{FF}_{-/R}(.)	
\end{align}
on the following ringed spaces from \cite{BK}:
\begin{align}
(\mathrm{Spa}^\mathrm{BK}R\left<X_1,...,X_n\right>,\mathcal{O}_{\mathrm{Spa}^\mathrm{BK}R\left<X_1,...,X_n\right>}),n=0,1,2,...	
\end{align}
We then have the situation to promote the functors of rings and stacks to the $\infty$-categorical context as above. Here recall that from \cite[Definition 9.3.3, Definition 9.3.5, Definition 9.3.11, Definition 9.3.9]{KL1} and \cite{KL2}:

\begin{definition}
For any $\mathrm{Spa}^\mathrm{BK}R\left<X_1,...,X_n\right>$, we have by taking the global section:
\begin{align}
{\square\square}\widetilde{\mathcal{C}}_{-/R}(.)(\mathcal{O}_{\mathrm{Spa}^\mathrm{BK}R\left<X_1,...,X_n\right>})&(\mathrm{Spa}^\mathrm{BK}R\left<X_1,...,X_m\right>):=\\
&\widetilde{\mathcal{C}}_{\mathrm{Spa}R\left<X_1,...,X_m\right>/R,\text{pro\'et}}(\mathrm{Spa}R\left<X_1,...,X_m\right>/R,\text{pro\'et})[k^{1/2}]\\
{\square\square}{\mathbb{B}_e}_{-/R}(.)(\mathcal{O}_{\mathrm{Spa}^\mathrm{BK}R\left<X_1,...,X_n\right>})&(\mathrm{Spa}^\mathrm{BK}R\left<X_1,...,X_m\right>):=\\
&{\mathbb{B}_e}_{\mathrm{Spa}R\left<X_1,...,X_m\right>/R,\text{pro\'et}}(\mathrm{Spa}R\left<X_1,...,X_m\right>/R,\text{pro\'et})[k^{-1/2}],\\
{\square\square}{\mathbb{B}_\mathrm{dR}^+}_{-/R}(.)(\mathcal{O}_{\mathrm{Spa}^\mathrm{BK}R\left<X_1,...,X_n\right>})&(\mathrm{Spa}^\mathrm{BK}R\left<X_1,...,X_m\right>):=\\
&{\mathbb{B}_\mathrm{dR}^+}_{\mathrm{Spa}R\left<X_1,...,X_m\right>/R,\text{pro\'et}}(\mathrm{Spa}R\left<X_1,...,X_m\right>/R,\text{pro\'et})[k^{1/2}],\\
{\square\square}{\mathbb{B}_\mathrm{dR}}_{-/R}(.)(\mathcal{O}_{\mathrm{Spa}^\mathrm{BK}R\left<X_1,...,X_n\right>})&(\mathrm{Spa}^\mathrm{BK}R\left<X_1,...,X_m\right>):=\\
&{\mathbb{B}_\mathrm{dR}}_{\mathrm{Spa}R\left<X_1,...,X_m\right>/R,\text{pro\'et}}(\mathrm{Spa}R\left<X_1,...,X_m\right>/R,\text{pro\'et})[k^{1/2}],\\
{\square\square}{{FF}}_{-/R}(.)(\mathcal{O}_{\mathrm{Spa}^\mathrm{BK}R\left<X_1,...,X_n\right>})&(\mathrm{Spa}^\mathrm{BK}R\left<X_1,...,X_m\right>):=\\
&{\square\square}{FF}_{\mathrm{Spa}R\left<X_1,...,X_m\right>/R,\text{pro\'et}}(\mathrm{Spa}R\left<X_1,...,X_m\right>/R,\text{pro\'et}).
\end{align}
And from \cite[Definition 9.3.3, Definition 9.3.5, Definition 9.3.11, Definition 9.3.9]{KL1} and \cite{KL2} we have the notation of $\varphi$-modules, ${\square\square}B$-pairs and the vector bundles over the ${\square\square}FF$ curves as above. We now use the notation $M$ to denote them. We then put $V(.)(\mathcal{O}_{\mathrm{Spa}^\mathrm{BK}R\left<X_1,...,X_n\right>})(\mathrm{Spa}^\mathrm{BK}R\left<X_1,...,X_m\right>):=M(\mathrm{Spa}R\left<X_1,...,X_m\right>/R,\text{pro\'et})$. For the stack ${\square\square}FF$, this $V$ will be a corresponding vector bundle at the end over ${\square\square}FF_{(R\left<X_1,...,X_n\right>)^\flat/R}$. 	
\end{definition}

\begin{definition}
Following \cite[Definition 9.3.3, Definition 9.3.5, Definition 9.3.11, Definition 9.3.9]{KL1}, \cite{KL2} we give the following definition. For any space
\begin{align}
(\mathbb{X},\mathcal{R})=\underset{n}{\mathrm{homotopycolimit}}(\mathbb{X}_n,\mathcal{R}_n)	
\end{align}
in the $\infty$-categories:
\begin{align}
&\mathrm{Ind}^\mathrm{formalspectrum}\mathrm{Sta}^\mathrm{derivedringed,\sharp}_{\mathrm{sComm}\mathrm{Simplicial}\mathrm{Ind}\mathrm{Seminormed}_R,\mathrm{homotopyepi}},\\
&\mathrm{Ind}^\mathrm{formalspectrum}\mathrm{Sta}^\mathrm{derivedringed,\sharp}_{\mathrm{sComm}\mathrm{Simplicial}\mathrm{Ind}^m\mathrm{Seminormed}_R,\mathrm{homotopyepi}},\\
&\mathrm{Ind}^\mathrm{formalspectrum}\mathrm{Sta}^\mathrm{derivedringed,\sharp}_{\mathrm{sComm}\mathrm{Simplicial}\mathrm{Ind}\mathrm{Normed}_R,\mathrm{homotopyepi}},\\
&\mathrm{Ind}^\mathrm{formalspectrum}\mathrm{Sta}^\mathrm{derivedringed,\sharp}_{\mathrm{sComm}\mathrm{Simplicial}\mathrm{Ind}^m\mathrm{Normed}_R,\mathrm{homotopyepi}},\\
&\mathrm{Ind}^\mathrm{formalspectrum}\mathrm{Sta}^\mathrm{derivedringed,\sharp}_{\mathrm{sComm}\mathrm{Simplicial}\mathrm{Ind}\mathrm{Banach}_R,\mathrm{homotopyepi}},\\
&\mathrm{Ind}^\mathrm{formalspectrum}\mathrm{Sta}^\mathrm{derivedringed,\sharp}_{\mathrm{sComm}\mathrm{Simplicial}\mathrm{Ind}^m\mathrm{Banach}_R,\mathrm{homotopyepi}},	
\end{align}

we define the corresponding $\infty$-functors:
\begin{align}
{\square\square}\widetilde{\mathcal{C}}_{-/R}(.),{\square\square}{\mathbb{B}_e}_{-/R}(.),{\square\square}{\mathbb{B}_\mathrm{dR}^+}_{-/R}(.),{\square\square}{\mathbb{B}_\mathrm{dR}}_{-/R}(.),{\square\square}{FF}_{-/R}(.),V(.)	
\end{align}
as:
\begin{align}
&{\square\square}\widetilde{\mathcal{C}}_{-/R}(.)(\mathcal{R}):=\underset{n}{\mathrm{homotopylimit}}~{\square\square}\widetilde{\mathcal{C}}_{-/R}(\mathcal{R}_n),\\
&{\square\square}{\mathbb{B}_e}_{-/R}(.)(\mathcal{R}):=\underset{n}{\mathrm{homotopylimit}}~{\square\square}{\mathbb{B}_e}_{-/R}(.)(\mathcal{R}_n),\\
&{\square\square}{\mathbb{B}_\mathrm{dR}^+}_{-/R}(.)(\mathcal{R}):=\underset{n}{\mathrm{homotopylimit}}~{\square\square}{\mathbb{B}_\mathrm{dR}^+}_{-/R}(.)(\mathcal{R}_n),\\
&{\square\square}{\mathbb{B}_\mathrm{dR}}_{-/R}(.)(\mathcal{R}):=\underset{n}{\mathrm{homotopylimit}}~{\square\square}{\mathbb{B}_\mathrm{dR}}_{-/R}(.)(\mathcal{R}_n),\\
&{\square\square}{{FF}}_{-/R}(.)(\mathcal{R}):=\underset{n}{\mathrm{homotopycolimit}}~{\square\square}{{FF}}_{-/R}(.)(\mathcal{R}_n),\\	
&{\square\square}V(.)(\mathcal{R}):=\underset{n}{\mathrm{homotopylimit}}~{\square\square}V(.)(\mathcal{R}_n).
\end{align}
\end{definition}

\

\begin{remark}
Here the homotopy colimits are taken in the corresponding colimit completions of the categories where the rings and spaces are living. Here the homotopy limits are taken in the corresponding limit completions of the categories where the rings and spaces are living. For instance, the Robba functor ${\square\square}\widetilde{\mathcal{C}}_{-/R}(.)$ takes value in ind-Fr\'echet rings $\mathrm{Ind}\text{Fr\'echet}_R$, we then consider the corresponding homotopy limit closure $\overline{\mathrm{Ind}\text{Fr\'echet}}^{\mathrm{homotopylimit}}_R$. For instance, the Fargues-Fontaine stack functors ${\square\square}{{FF}}_{-/R}(.)(-)$ take value in the preadic spaces $\mathrm{PreAdic}_R$, then we consider the corresponding homotopy colimit closure $\overline{\mathrm{PreAdic}_R}^{\mathrm{homotopycolimit}}$. 	
\end{remark}

\newpage

\subsection*{Acknowledgements} 

The author thanks Professor Kedlaya for conversation on topologization and functional analytification. Idea of considering mixed-parity extension was learnt from Professor Sorensen, and we thank Professor Sorensen for the conversation on this classical $p$-adic Hodge theoretic consideration which motivates our current topologization and functional analytification for more fashionable $p$-adic cohomologizations.

\end{document}